\newtheorem{theorem}{Theorem}[section]
\newtheorem{lemma}[theorem]{Lemma}
\newtheorem{proposition}[theorem]{Proposition}
\newtheorem{corollary}[theorem]{Corollary}
\newtheorem{conjecture}[theorem]{Conjecture}
\theoremstyle{definition}
\newtheorem{definition}[theorem]{Definition}
\newtheorem{example}[theorem]{Example}
\newtheorem{remark}[theorem]{Remark}
\title{Khovanov homology, wedges of spheres and complexity}
\author{Jozef H. Przytycki}
\address{The George Washington University, US \& University of Gdańsk, Poland}
\email{przytyck@gwu.edu}
\author{Marithania Silvero}			
\address{Universidad de Sevilla, Spain}
\email{marithania@us.es}
\begin{document}
\maketitle

\begin{center}
\textbf{Abstract}
\end{center}
\vspace{0.2cm}
Our main result has topological, combinatorial and computational flavor. It is motivated by a fundamental conjecture stating that computing Khovanov homology of a closed braid of fixed number of strands has polynomial time complexity. We show that the independence simplicial complex $I(w)$  associated to the 4-braid diagram $w$ (and therefore its Khovanov spectrum at extreme quantum degree) is contractible or homotopy equivalent to either a sphere, or a wedge of 2 spheres (possibly of different dimensions), or a wedge of 3 spheres (at least two of them of the same dimension), or a wedge of 4 spheres (at least three of them of the same dimension). On the algorithmic side we prove that finding the homotopy type of $I(w)$ can be done in polynomial time with respect to the number of crossings in $w$. In particular, we prove the wedge of spheres conjecture for circle graphs obtained from 4-braid diagrams. We also introduce the concept of Khovanov adequate diagram  and discuss criteria for a link to have a Khovanov adequate braid diagram with at most 4 strands.

\tableofcontents


\section{Introduction} 
Our main result has topological, combinatorial and computational flavor and is connecting four fundamental conjectures:
\begin{conjecture}\label{Conjecture 1}\
\begin{enumerate}
\item [(a)] Computing Khovanov homology of a closed braid with fixed number of strands has polynomial time complexity with respect to the number of crossings.
\item[(b)] Determining the homotopy type of the geometric realization of  Khovanov homology (Khovanov spectra) of a closed braid with fixed number of strands has polynomial time complexity with respect to the number of crossings. 
\end{enumerate}
\end{conjecture}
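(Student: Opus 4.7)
The plan is to reduce computation of Khovanov homology of a closed $n$-braid to a polynomial-size combinatorial problem once $n$ is fixed. Khovanov's cube of resolutions of a diagram with $c$ crossings has $2^c$ vertices, so direct computation is exponential; the structural fact we want to exploit is that, with $n$ bounded, every Kauffman state is determined by local data on at most $n$ adjacent strands, so the chain complex ought to be describable by a transfer-matrix style recursion whose internal state depends only on $n$.

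First I would treat the extreme quantum grading, where the geometric realization of the relevant subcomplex of the Khovanov chain complex is the independence simplicial complex $I(w)$ of a circle graph associated with the crossings of $w$. The strategy is to iterate the standard decomposition expressing $I(w)$ as the union of $\del(v)$ and a cone over $\lk(v)$, for a judiciously chosen vertex $v$, combined with fold/matching lemmas that collapse contractible face pairs. For a $4$-braid, one hopes each step reduces the word length without branching beyond a bounded factor, producing a polynomial-size recursion tree whose leaves carry one of finitely many homotopy types (contractible or small wedges of spheres, as stated in the abstract), and thus yielding a polynomial-time algorithm in this slice.

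For arbitrary quantum gradings I would lift the recursion to the full chain complex using the long exact sequence of a chosen crossing $c_0$ that relates the Khovanov complex of $w$ to those of its $A$- and $B$-smoothings at $c_0$. Each smoothing preserves the bound on braid width and strictly decreases the crossing number, so iterating gives a tree of at most polynomial size whose leaves are trivial tangles. Graded homology can then be read off in polynomial time, and by promoting the argument to the Lipshitz--Sarkar spectrum level (cone constructions of spectra instead of mapping cones of chain complexes) one also recovers the stable homotopy type, giving part (b) alongside part (a).

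The hard part will be preventing exponential blow-up away from the extreme gradings. At those extremes the support of the complex concentrates on a distinguished Kauffman state and the recursion becomes essentially geometric on $I(w)$; in intermediate gradings the supporting cells appear in many combinations, and naive splitting at every crossing branches too often. The technical core will be a uniform collapse lemma, depending only on the braid width, guaranteeing that after each expansion a constant fraction of subproblems either collapse or are identified with previously computed local patterns. This is essentially a bounded-width version of the diagrammatic simplification underlying the conjecture, and establishing its quantitative form is where I expect the bulk of the difficulty to lie.
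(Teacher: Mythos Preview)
This statement is a \emph{conjecture}, and the paper does not claim to prove it; the authors explicitly write in Section~9 that Conjecture~1.1 ``will be the central object of research for years to come.'' What the paper actually establishes is the very special case of part~(b) restricted to the extreme quantum grading and $n=4$ (Theorem~1.5), via a detailed structural classification of strongly-reduced $4$-braid diagrams and the rhomboid-graph analysis of Sections~3, 6--8. Your proposal, by contrast, is pitched as a proof of the full conjecture, and it does not succeed.

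The central gap is in your treatment of arbitrary quantum gradings. You write that smoothing at a chosen crossing and iterating ``gives a tree of at most polynomial size whose leaves are trivial tangles.'' This is false: the skein/long-exact-sequence recursion branches in two at every crossing, so the recursion tree has $2^c$ leaves, not polynomially many. You then acknowledge this (``naive splitting at every crossing branches too often'') and appeal to a hoped-for ``uniform collapse lemma, depending only on the braid width, guaranteeing that after each expansion a constant fraction of subproblems either collapse or are identified.'' But no such lemma is stated, let alone proved; supplying it is precisely the content of the conjecture. In effect your argument is: the conjecture would follow from a lemma equivalent to the conjecture. Note also that a single smoothing of a braid crossing does not in general return a braid diagram, so the phrase ``preserves the bound on braid width'' needs a tangle/Temperley--Lieb formulation before it even makes sense.

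Even restricted to the extreme grading for $4$-braids, your sketch is far too coarse to match what the paper actually does. The paper's polynomial-time algorithm is not a generic $\del/\lk$ recursion with ``bounded branching''; it depends on (i) a reduction to strongly-reduced words (Corollary~6.5), (ii) a finite classification of $w^+$ into families $\mathcal{C}_0$--$\mathcal{C}_5$ (Proposition~7.1), (iii) explicit identification of the Lando graphs as (augmented, mod-augmented) rhomboid graphs, and (iv) a case-by-case elimination of negative letters in the tail (Section~8.1). The bound of at most four spheres in the wedge comes from this structure, not from a generic collapse argument. If you want to contribute here, a reasonable first step is to attempt the analogous classification for positive $5$-braids, which the authors flag as approachable.
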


A fundamental difference between Alexander polynomial and Jones (and HOMFLYPT) polynomial is that Alexander polynomial can be computed in polynomial time while finding Jones (and HOMFLYPT) polynomial is NP-hard \cite{Ja,Wel}. Thus computing Khovanov homology, a categorification of Jones polynomial, is NP-hard. At the moment, all existing programs computing Khovanov homology have exponential complexity (compare \cite{BN}). Therefore finding an algorithm of polynomial time complexity for Khovanov homology of closed braids with a fixed number of strands, would be the game changer allowing posing and testing conjectures about structure of Khovanov homology. 

Initial motivation for Conjecture \ref{Conjecture 1} (a) comes from the fact that computing HOMFLYPT (and therefore Jones) polynomial of a closed braid with fixed number of strands has polynomial time complexity. Such polynomial growth algorithm was developed by Morton and Short in \cite{MS}, even if complexity was not discussed in that paper.  

 The second conjecture concerns the geometric realization of extreme Khovanov homology. Lipshitz and Sarkar introduced a graded family of spectra associated to a link diagram refining Khovanov homology \cite{lipshitzsarkar}, called Khovanov spectra. Independently, in \cite{GMS} it was introduced a method to associate to every link diagram $D$ a simplicial complex $I_D$ so that its cohomology equals  Khovanov homology of $D$ at extreme quantum grading. This construction was proven to be equivalent to that of Lipshitz and Sarkar in extreme quantum grading \cite{FedSil}. The following conjecture was formulated in \cite{JACO}. 

\begin{conjecture}\label{Conjecture 2} The geometric realization of the extreme Khovanov homology (extreme Khovanov spectrum) is homotopy equivalent to a wedge of spheres (allowing empty wedge, i.e., a contractible set).
\end{conjecture}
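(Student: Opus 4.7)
The plan is to reduce Conjecture \ref{Conjecture 2} to a combinatorial statement about independence simplicial complexes of a specific class of graphs, and then to attack the resulting combinatorial problem by a combination of structural reductions and discrete Morse theory, without restricting to any fixed braid index.

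The first step uses the construction of \cite{GMS} together with the equivalence of \cite{FedSil}: the extreme Khovanov spectrum of a diagram $D$ has the homotopy type, up to suspension shifts, of the independence simplicial complex $I_D$ of a graph $G_D$ read off from the enhanced Kauffman state at the extreme quantum grading. Hence the conjecture is equivalent to proving $I_D \simeq \bigvee_i S^{n_i}$ for every link diagram $D$, which is a purely combinatorial problem about $G_D$.

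The second step would build a toolbox of homotopy-theoretic moves on $D$ (or directly on $G_D$) that either preserve the homotopy type of $I_D$ or decompose it cleanly. This includes the classical fold/dismantlability lemma (if $N_{G_D}(v) \subseteq N_{G_D}(u)$, then $I_D$ deformation retracts onto $I_{G_D \setminus u}$), Kozlov's cofibre sequence $I_{G_D} \simeq I_{G_D \setminus v} \cup_{I_{G_D \setminus N[v]}} C\, I_{G_D \setminus N[v]}$, and the subset of Reidemeister and Markov moves that preserve the extreme state, up to suspension of $I_D$. With these in hand, one can attempt an induction on the number of crossings: pick a vertex of $G_D$ whose link has a controlled structure, apply the relevant decomposition, and reduce to strictly smaller diagrams.

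The third and essential step is to identify the extra structure of the graphs $G_D$ arising from link diagrams that forces $I_D$ to be a wedge of spheres. I would look for a canonical elimination ordering of the crossings of $D$, coming from a Seifert-circle (or more generally, a Tait graph) decomposition, along which the fold/cofibre rules of Step 2 always succeed in reducing to smaller instances or in contributing a single critical sphere. In the base cases, where these reductions no longer apply, I would write down an explicit discrete Morse function on $I_D$ whose critical cells concentrate in few dimensions, matched against the extreme Khovanov homology computation.

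The main obstacle is Step 3. Independence complexes of arbitrary graphs can carry torsion and arbitrary homotopy type, so the wedge conclusion must come from genuine combinatorial structure of the class of $G_D$'s, not from any formal manipulation at the level of simplicial complexes. The present paper establishes such structure only for closures of $4$-braids, through a delicate case analysis; a uniform proof will require either a new invariant of the diagram refining the circle-graph structure of $G_D$ and certifying a good elimination order, or a topological input such as a natural braid-group action on $I_D$ organising its cells into wedge summands. Producing either is where the bulk of the work, and the principal uncertainty, resides.
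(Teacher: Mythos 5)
This statement is labeled as a \emph{conjecture} in the paper, and the paper does not prove it. What the paper actually establishes (Theorem~\ref{theomain}) is the special case where $D$ is the closure of a $4$-strand braid diagram, via a classification of the Lando graphs arising there into rhomboid-type families and a direct reduction of their independence complexes. There is no argument in the paper for arbitrary diagrams, and the authors themselves describe Conjecture~\ref{Conjecture 3} (which implies this one) as remaining open despite several attempts.

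Your proposal is consistent with this: Steps~1 and~2 are correct and standard---the reduction to $I_D$ via \cite{GMS} and \cite{FedSil}, fold/dismantlability, and the cofibre sequence are exactly the kind of tools the paper uses (Lemma~\ref{domlemma}, Proposition~\ref{Prop3.3}, Corollary~\ref{leaf}, Proposition~\ref{genCsorba}). But you candidly acknowledge in Step~3 that you do not have the structural input needed to make the induction close in general, and that acknowledgment is precisely where the proposal stops being a proof. The genuine gap is the one you name: no canonical elimination order, discrete Morse function, or refined invariant of circle graphs is known that would force $I_D$ to collapse to a wedge of spheres for all link diagrams. Until that is supplied, what you have is a research plan, not a proof, and it should not be presented as resolving the conjecture. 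If your intent is to contribute partial progress, the honest framing is that it matches the paper's approach in spirit but only in the cases the paper already handles; the general statement remains open.
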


The construction in \cite{GMS} involves the independence complex of certain circle graphs (not necessarily connected) called Lando graphs. By construction, Lando graphs turn out to be bipartite, so they are a proper subset of the family of circle graphs. The third conjecture, formulated also in \cite{JACO}, generalizes Conjecture \ref{Conjecture 2} and it is formulated purely in the language of algebraic combinatorics.

\begin{conjecture}\label{Conjecture 3} (Wedge of spheres conjecture) The independence complex of a circle graph is homotopy equivalent to a wedge of spheres.
\end{conjecture}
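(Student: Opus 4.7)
The natural line of attack is induction on the number of chords in a chord diagram realising $G$. Pick any vertex $v$; then
\[ I(G) \;=\; \del_{I(G)}(v) \cup \mathrm{star}_{I(G)}(v), \]
where $\del_{I(G)}(v)=I(G-v)$, the star $\mathrm{star}_{I(G)}(v)=v\ast I(G-N[v])$ is contractible, and the intersection is $\lk_{I(G)}(v)=I(G-N[v])$. Thus $I(G)$ is homotopy equivalent to the mapping cone of the inclusion $\iota\colon I(G-N[v]) \hookrightarrow I(G-v)$. Since induced subgraphs of a circle graph are circle graphs (just delete the corresponding chords from the diagram), both $I(G-v)$ and $I(G-N[v])$ are wedges of spheres by the inductive hypothesis.

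If $\iota$ is null-homotopic, the cone splits as $I(G-v) \vee \Sigma\, I(G-N[v])$, again a wedge of spheres. The plan is therefore to choose $v$ so as to force such a null-homotopy. First I would simplify $G$ by repeated applications of Engström's fold lemma: whenever $N(u)\subseteq N(w)$ for distinct $u,w$, we have $I(G)\simeq I(G-w)$ and may pass to a smaller circle graph. In the resulting fold-irreducible case, I would exploit the cyclic structure by picking $v$ whose endpoints bound a minimal arc on the circle; then a neighbour $u$ of $v$ sits ``under'' $v$ and is adjacent in $G-v$ to every vertex of $G - N[v]$, so the star of $u$ in $I(G-v)$ contains the image of $\iota$, producing the desired null-homotopy.

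The main obstacle, and the reason the conjecture remains open in full generality, is precisely the existence of such a ``dominating neighbour'' in a fold-irreducible chord diagram: the cyclic order of chord endpoints can be tangled enough that no single choice of $v$ witnesses the required containment, and then the mapping cone may in principle record genuine homotopical information from $\iota$ that is not of wedge-of-spheres type. A more powerful variant of the plan would combine the above with Bouchet's local complementation and Cunningham's split decomposition for circle graphs, which breaks $G$ into smaller circle graphs whose independence complexes glue via join-type operations that preserve the wedge-of-spheres property. In the restricted setting of circle graphs coming from $n$-braid diagrams with $n$ small, the combinatorics around any chord is bounded, and a finite case analysis over the shapes of the pair $(I(G-v), I(G-N[v]))$ suffices; this is what gives the explicit bounded wedges of spheres listed in the abstract. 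Closing the gap to arbitrary circle graphs is precisely what separates Conjecture \ref{Conjecture 3} from what the present paper actually establishes.
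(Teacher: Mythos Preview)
The statement you are addressing is a \emph{conjecture}; the paper does not prove it and explicitly leaves it open (see the concluding remarks). What the paper does establish is the restricted case of Lando graphs arising from closed $4$-braid diagrams, and the method there is not an inductive mapping-cone argument at all: it is an explicit classification of strongly-reduced $4$-braid words into six families $\mathcal{C}_0,\dots,\mathcal{C}_5$ (Proposition~\ref{B4classification}), followed by a hands-on identification of the resulting Lando graphs with specific combinatorial families (paths, polygons, fans, rhomboid and augmented rhomboid graphs) whose independence complexes are computed directly via the Domination Lemma, leaf removal, and Csorba reduction.

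Your proposal is candid about its own gap, so it is not a proof but a survey of where the difficulty lies. The decisive unproven step is that the inclusion $\iota\colon I(G-N[v])\hookrightarrow I(G-v)$ is null-homotopic; without this, the mapping cone of a map between wedges of spheres need not be a wedge of spheres (the Hopf map $S^3\to S^2$ has cone $\mathbb{C}P^2$). Your minimal-arc heuristic does not secure null-homotopy even after folding: a neighbour $u$ of the minimal-arc chord $v$ has one endpoint inside that arc and one outside, but a chord $w\in G-N[v]$ with both endpoints on the outer arc can be nested under $u$ rather than crossing it, so no single $u$ need dominate all of $G-N[v]$ inside $G-v$. The appeals to local complementation and split decomposition are likewise speculative: local complementation does not in general preserve the homotopy type of the independence complex, and there is no established join-type formula for independence complexes along a split. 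In short, you have correctly located the obstruction, but nothing here closes it, and the paper claims no more than the $4$-braid case.
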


The fourth conjecture is essentially that of Adamaszek (compare to \cite{Adamacomplexity}). However, he formulated it as a question, but our calculations and partial results gave us support to state it as a conjecture. Note that, by Theorem~\ref{teoGMS}, Conjecture \ref{Conjecture 4} implies Conjecture \ref{Conjecture 1} (b) at extreme quantum grading.

\begin{conjecture}\label{Conjecture 4}
The homotopy type of the independence complex of a circle graph can be found in polynomial time with respect to the number of vertices of the graph.
\end{conjecture}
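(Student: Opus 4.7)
My plan is to attack Conjecture~\ref{Conjecture 4} jointly with Conjecture~\ref{Conjecture 3}: without a structural description of the homotopy type as a wedge of spheres there is no compact certificate that could be output in polynomial time, so algorithmically the two conjectures stand or fall together. Accordingly, the target is a recursive procedure that simultaneously produces a wedge decomposition and runs in polynomial time.

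First, I would reduce to prime circle graphs via Cunningham's split decomposition, which runs in polynomial time and breaks any circle graph $G$ into prime circle graphs, complete graphs, and stars. For each basic operation in this decomposition (disjoint union, complete sum, substitution at a split) I would derive a homotopy formula for the independence complex, typically a cofiber sequence or a homotopy pushout. The classical cofibration
\[
I(G\setminus N[v]) \hookrightarrow I(G\setminus v) \to I(G)
\]
is the main tool, since its mapping cone identifies $I(G)$ with a pushout of a suspension and $I(G\setminus v)$, so an inductive wedge-of-spheres statement propagates through it cleanly. Combining these formulas reduces the global computation to independence complexes of the prime pieces.

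Second, for a prime circle graph $H$ I would look for a \emph{reduction vertex}: a $v$ with some neighbor dominating it (so that $I(H)\simeq I(H\setminus v)$ by the standard fold lemma) or, failing that, a $v$ for which $I(H\setminus v)$ and $I(H\setminus N[v])$ are strictly smaller independence complexes of circle graphs with controllably fewer interlacements. The Engström--Adamaszek folding machinery supplies one toolkit, while the chord-diagram representation of $H$ provides the geometric input needed to locate such a vertex by inspecting local crossing patterns. This mirrors the case analysis performed in the present paper for $4$-braid diagrams, where the finite list of chord configurations across four strands forces termination.

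The hard part, and in my view the genuine crux of the conjecture, is obtaining a polynomial bound on the depth of this reduction tree. When the cofibration above must truly branch, the intersection term $I(H\setminus N[v])$ may itself force further branching, potentially producing exponentially many leaves. To control this I would need a potential function on circle graphs --- perhaps a weighted count of interlacement types, or a measure derived from the split-decomposition tree --- that strictly decreases under every branching step, together with a guarantee that a reduction vertex always exists whenever the potential is nonzero. The $4$-braid result of this paper succeeds precisely because fixing the strand number bounds the local complexity of chord patterns, and extending that argument to arbitrary circle graphs demands a genuinely new structural invariant; constructing and verifying such an invariant is where I expect most of the work to lie.
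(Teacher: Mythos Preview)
The statement you are addressing is a \emph{conjecture}, and the paper does not claim to prove it. The paper establishes Conjecture~\ref{Conjecture 4} only in the very special case of circle graphs arising as Lando graphs of closed $4$-braid diagrams, via the explicit case analysis culminating in Theorem~\ref{theomain}. In that restricted setting the chord diagrams have bounded combinatorial type, the strongly-reduced braid words fall into six families $\mathcal{C}_0$--$\mathcal{C}_5$, and for each family the Lando graph is identified concretely (forests, polygons, rhomboid or augmented rhomboid graphs, etc.), so the recursion terminates for structural reasons specific to four strands. There is no general argument in the paper for arbitrary circle graphs.

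Your proposal is therefore not a proof but a research programme, and you say as much: you correctly isolate the crux as controlling the depth of the branching recursion induced by the cofibration $I(G\setminus N[v])\hookrightarrow I(G\setminus v)\to I(G)$, and you note that a suitable potential function on circle graphs is missing. That missing ingredient is exactly why Conjecture~\ref{Conjecture 4} is open. The split-decomposition preprocessing and the fold/domination reductions you outline are standard and sound, but they do not by themselves bound the recursion --- prime circle graphs can still force genuine branching, and without the potential function (or some other termination measure) the algorithm you sketch may have exponential depth. So the gap you name is real and is the whole problem; nothing in the paper closes it beyond the $4$-braid case.
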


In this paper we solve Conjecture \ref{Conjecture 1} in the case of extreme Khovanov homology and its geometric realization for 4-strands braids. As a byproduct of the paper we also solve Conjectures \ref{Conjecture 2}, \ref{Conjecture 3} and \ref{Conjecture 4} in those cases related to 4-strands braids. Our main result is the following:

\begin{theorem}
Let $w$ be a braid diagram on $4$ strands and write $\hat{w}$ for its closure. Then, the homotopy type of the geometric realization $I(w)$ of the extreme Khovanov homology of $\hat{w}$ can be computed in polynomial time. Moreover, if not contractible, $I(w)$ is homotopy equivalent to either a sphere, or a wedge of two spheres, or $S^k \vee S^i \vee S^i$, or $S^k \vee S^i \vee S^i \vee S^i$, with $k \geq i$. 
\end{theorem}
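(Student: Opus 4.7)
The strategy is to work entirely inside the independence-complex world. By the construction recalled above (Theorem~\ref{teoGMS}), the geometric realization $I(w)$ is homotopy equivalent to the independence complex $I(L_w)$ of the Lando circle graph associated to $w$. The theorem therefore reduces to a purely combinatorial statement about independence complexes in this specific family of circle graphs, together with an algorithm computing their homotopy type from $w$.

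The engine of the proof is a pair of classical reduction moves. The first is the standard \emph{fold} or \emph{domination} lemma: if $N(v) \subseteq N(u)$ for distinct vertices $v,u$ of $G$, then $I(G) \simeq I(G \setminus \{u\})$, so a vertex can be removed without changing the homotopy type. The second is the pushout along a chosen vertex $v$,
$$I(G) \simeq I(G \setminus \{v\}) \cup_{I(G \setminus N[v])} \mathrm{Cone}\bigl(I(G \setminus N[v])\bigr),$$
which collapses in favourable cases either to the suspension $\Sigma I(G \setminus N[v])$ (when $I(G \setminus \{v\})$ is contractible) or to the wedge $I(G \setminus \{v\}) \vee \Sigma I(G \setminus N[v])$ (when the attaching map is null-homotopic). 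Each use of either move strictly decreases the vertex count, which will drive the induction.

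The main body of the proof is a structural analysis of $L_w$ for a 4-strand braid word. With only three generators $\sigma_1^{\pm 1}, \sigma_2^{\pm 1}, \sigma_3^{\pm 1}$ involved, the crossings of $w$ partition into three columns indexed by the strand gap they occupy, and edges in $L_w$ link only crossings in adjacent columns. I plan to induct on the number of crossings: first apply all available folds to reduce to an irreducible graph, then locate a vertex $v$ at which the pushout above degenerates, so that $I(G \setminus \{v\})$ and $I(G \setminus N[v])$ are already in the target form by the induction hypothesis and recombine into a wedge of the allowed shape. The bound ``at most four spheres, at least $n-1$ of equal dimension'' is precisely what the narrowness of four strands buys: it limits how many summands of distinct dimensions can appear when one suspends one piece and wedges with another, and the inequality $k \geq i$ records the constraint that the ``odd'' summand arising from the wedge part cannot be of strictly smaller dimension than the repeated suspension summands.

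The principal obstacle is this case analysis: verifying, in every irreducible Lando graph of a 4-braid, that a vertex $v$ exists at which the pushout genuinely degenerates, and that each recombination preserves the tightly constrained wedge shape (in particular the asymmetric condition $k\geq i$). Once this is settled, the polynomial-time claim follows easily: detecting an applicable fold or an effective pushout vertex takes linear time in the graph size; each reduction removes at least one vertex; and since $G \setminus \{v\}$ and $G \setminus N[v]$ are themselves Lando graphs of subconfigurations of $w$, the recursion depth and branching factor are bounded polynomially in the number of crossings of $w$.
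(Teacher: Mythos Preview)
Your proposal identifies the right toolkit (domination/fold, the cofibre decomposition of Proposition~\ref{Prop3.3}), but it is a strategy outline rather than a proof, and in two places the strategy itself breaks.

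First, the polynomial-time claim as written is false. You say the recursion depth and branching factor are each polynomially bounded, but a branching factor of even $2$ (one branch for $G\setminus\{v\}$, one for $G\setminus N[v]$) combined with linear depth already gives an exponential tree. The paper's argument works precisely because branching is not generic: the structural analysis shows that for a $4$-braid Lando graph, once the graph is put in a normal form, the wedge-producing step of Proposition~\ref{Prop3.3} is applied a \emph{bounded constant} number of times (at most three, over the special vertices $b_2,b_\ell,b_r$ of the augmented rhomboid graph, cf.\ Proposition~\ref{propaugmented}); every other reduction is a suspension or a deletion with no branching. That constant bound on the number of wedge operations is simultaneously what gives polynomial time and what gives the ``at most four spheres'' conclusion. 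Your sketch never isolates such a bound, and without it neither claim follows.

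Second, two of your inductive hypotheses are unjustified. You assert that $G\setminus\{v\}$ and $G\setminus N[v]$ are Lando graphs of subconfigurations of $w$; in general an induced subgraph of a Lando graph need not itself arise from a braid word, so the induction does not close as stated. The paper sidesteps this by first classifying the possible Lando graphs of strongly-reduced $4$-braids into explicit combinatorial families (forests, polygons, rhomboid and augmented rhomboid graphs; Propositions~\ref{B4classification} and~\ref{Lando4pos}) and then proving the homotopy statement for all induced subgraphs of those families directly, rather than by induction on braid words. You also assert that the inequality $k\geq i$ ``records'' a constraint on the recombination, but you never verify it; in the paper it comes from the concrete Lemma~\ref{Lemma123}, comparing dimensions of spheres for $I(L_n)$ against $I(L_n-A)$ for subsets $A$ of a path. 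The ``principal obstacle'' you name---the case analysis---is in fact the entire content of the theorem, and your proposal contains none of it.
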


As a consequence of the above result, we get the following corollary, that can be used to find criteria for links to have Khovanov adequate braid diagrams on 4 strands (see Section \ref{sectionkhovad}). 

\begin{corollary}\label{summarykhov}
Let $\beta$ be a 4-strands braid diagram and $\hat{\beta}$ its closure. Then its extreme Khovanov homology $Kh_{*,j_{\min}}(\hat{\beta})$ is trivial or equal to $\mathbb{Z}$, to $\mathbb{Z}[k] \oplus \mathbb{Z}[i]$, to $\mathbb{Z}[k]\oplus \mathbb{Z}^2[i]$, or to $\mathbb{Z}[k]\oplus \mathbb{Z}^3[i]$, where $k\geq i$ and $[h]$ denotes the homological grading of Khovanov homology. 
\end{corollary}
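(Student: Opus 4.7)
The plan is to derive Corollary~\ref{summarykhov} directly from the main theorem together with Theorem~\ref{teoGMS}, which identifies the reduced cohomology of the independence complex $I(w)$ with the extreme Khovanov homology $Kh_{*,j_{\min}}(\hat{\beta})$ up to a global shift in the homological grading determined by $w$. In other words, once the homotopy type of $I(w)$ has been pinned down, the extreme Khovanov groups are essentially read off.

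First I would apply the main theorem, which restricts $I(w)$ to one of five homotopy types: contractible, $S^k$, $S^k\vee S^i$, $S^k\vee S^i\vee S^i$, or $S^k\vee S^i\vee S^i\vee S^i$, with $k\geq i$. Next I would invoke the standard computation $\tilde{H}^n\bigl(\bigvee_j S^{n_j}\bigr)\cong\mathbb{Z}^{\#\{j\,:\,n_j=n\}}$. This produces, respectively: trivial cohomology; $\mathbb{Z}$ in a single degree; $\mathbb{Z}$ in degree $k$ and $\mathbb{Z}$ in degree $i$ (collapsing to $\mathbb{Z}^2$ concentrated in one degree when $k=i$); $\mathbb{Z}$ in degree $k$ with $\mathbb{Z}^2$ in degree $i$ (or $\mathbb{Z}^3$ if $k=i$); and $\mathbb{Z}$ in degree $k$ with $\mathbb{Z}^3$ in degree $i$ (or $\mathbb{Z}^4$ if $k=i$). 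Each of these falls into one of the patterns $0$, $\mathbb{Z}$, $\mathbb{Z}[k]\oplus\mathbb{Z}[i]$, $\mathbb{Z}[k]\oplus\mathbb{Z}^2[i]$ or $\mathbb{Z}[k]\oplus\mathbb{Z}^3[i]$ allowed by the statement under the convention $k\geq i$.

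The last step is to transport the cohomological data through Theorem~\ref{teoGMS}. A sphere of dimension $n$ in $I(w)$ contributes a copy of $\mathbb{Z}$ to extreme Khovanov homology in a homological grading that differs from $n$ by a fixed shift depending only on $w$, while spheres of equal dimension stack up in the same Khovanov grading. The only delicate point is to absorb this global shift into the indices $k$ and $i$ of the statement so that the output is presented in exactly the stated form; I expect this grading bookkeeping to be the sole real obstacle, and beyond it no further topological or combinatorial argument is required.
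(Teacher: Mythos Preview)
Your proposal is correct and matches the paper's approach: the paper simply states this corollary as an immediate consequence of the main theorem (together with Theorem~\ref{teoGMS}) without giving a separate proof, and you have merely spelled out that implication. One small imprecision: the passage from $\tilde H_{n}(I(w))$ to $Kh_{*,j_{\min}}(\hat\beta)$ via Theorem~\ref{teoGMS} is the affine map $n\mapsto 2(n+1)-c$ rather than a pure shift, but since this is order-preserving the inequality $k\geq i$ and the multiplicities transfer exactly as you claim.
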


Observe that results in this paper are stated in terms of unoriented (framed) Khovanov homology (see Section 2 and compare to \cite{Vir1}). To get analogous result in terms of oriented version of Khovanov homology, one needs to adjust gradings as indicated in Remark \ref{remarkgradings}. \\ 

The paper is organized as follows:

In Section \ref{Section 2} we briefly review the definition of Khovanov homology of unoriented (framed) links following Viro \cite{Vir1} and recall the geometric realization of extreme Khovanov homology introduced in \cite{GMS}.  

Section 3 is devoted to independence complexes of graphs. After recalling basic properties, we analyze the homotopy type of complicated graphs (e.g. augumented rhomboid graphs), which will be crucial in the proof of Theorem~1.5. 

In Section \ref{Section 5} we introduce some basic definitions concerning the braid monoid $M_n$ and Temperley-Lieb monoid $M^{TL}_n$; this allows us to explain the scheme of the proof of Theorem 1.5 in Section \ref{Section 6}. The proof is completed in Sections \ref{secstronglyred}, \ref{secpositive} and \ref{sectnegative}.

We finish our paper with some applications and concluding remarks in Section \ref{secfinal}.


\section{Khovanov homology}\label{Section 2}

We briefly review the definition of Khovanov homology of unoriented (framed) links following Viro \cite{Vir1} and recall the geometric realization of extreme Khovanov homology introduced in \cite{GMS}. 

Let $D$ be an unoriented link diagram with $c$ ordered crossings. A \textit{Kauffman state} $s$ assigns a label, $A$ or $B$, to each crossing of $D$, that is, \, $s:~cr(D)~\to \{A,B\}$. Set $\sigma(s) = |s^{-1}(A)|- |s^{-1}(B)|$ and let $\mathcal{S}$ be the collection of $2^{c}$ possible states of $D$. The \textit{resolution} $sD$ is the system of circles and chords obtained after smoothing each crossing of $D$ according to the label assigned by $s$ by following the convention in Figure \ref{markers}. We write $|s|$ for the number of circles in $sD$.

\begin{figure}
\centering
\includegraphics[width = 9cm]{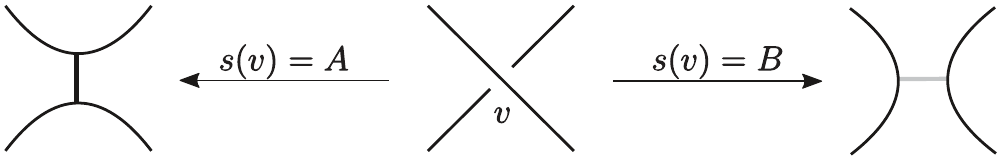}
\caption{\small{Smoothing of a crossing according to its $A$ or $B$-label. $A$-chords (resp. $B$-chords) are represented by dark (resp. light) segments.}}
\label{markers}
\end{figure}

An \textit{enhancement} of a state $s$ is a map $\epsilon$ assigning a sign $\pm 1$ to each of the circles in $sD$. We sometimes keep the letter $s$ for the enhanced state $(s, \epsilon)$ to avoid cumbersome notation. Write $\tau(s, \epsilon) =\Sigma \epsilon(r)$, where $r$ ranges over all circles in $sD$, and define, for the enhanced state $(s,\epsilon)$, the integers 
$$i(s, \epsilon) = i(s) = \sigma(s), \quad \quad j(s, \epsilon) = \sigma(s) + 2 \tau(s, \epsilon).$$

The enhanced state $s'$ is said to be \textit{adjacent} to the enhanced state $s$ if $i(s')=i(s)-2$, \, $j(s')=j(s)$, both states have identical labels except for one crossing $v$, where $s$ assigns an $A$-label and $s'$ a $B$-label, and they assign the same sign to the common circles in resolutions $sD$ and $s'D$.

Let $C_{i,j}(D)$ be the free abelian group generated by the set of enhanced states $s$ of $D$ with $i(s)=i$ and $j(s)=j$. For each integer $j$, consider the chain complex $$\ldots \, \longrightarrow \, C_{i,j} \, \stackrel{\partial_{i,j}}{\longrightarrow} \, C_{i-2,j} \,\longrightarrow \, \ldots $$

\noindent with differential $\partial_{i,j}(s) = \Sigma_{s'\in \mathcal{S}} (s:s') s'$, with $(s:s')=0$ if $s'$ is not adjacent to $s$ and otherwise $(s:s')=(-1)^k$, with $k$ the number of $B$-labeled crossings in $s$ coming after crossing $v$ in the chosen ordering. 

It turns out that $\partial_{i-2,j}\circ \partial_{i,j}=0$ and the corresponding homology groups
$$Kh_{i,j}(D) = \frac{ker(\partial_{i,j})}{im(\partial_{i+2,j})}$$ are invariants of framed unoriented links, and they categorify the unreduced Kauffman bracket polynomial. We refer to them as (framed) Khovanov homology groups of $D$.

\begin{remark}\label{remarkgradings}
The framed unoriented version of Khovanov homology is equivalent to its oriented version, which categorifies Jones polynomial. Framed and oriented version are related by $Kh_{i,j}(D) = H^{I,J}(\vec{D})$, where $I= \frac{w-i}{2}$ and $J=\frac{3w-j}{2}$, with $w$ the writhe of the oriented diagram $\vec{D}$.  
\end{remark}

Let $j_{\min}(D) = \min \{j(s,\epsilon) \, | \, (s,\epsilon) \mbox{ is an enhanced state of } D\}.$ We will refer to the complex $\{C_{i,j_{\min}}(D), \partial_i\}$ as the extreme Khovanov complex and to the corresponding homology groups $Kh_{i, j_{\min}}(D)$ as the (potential) extreme Khovanov homology groups of $D$. If we denote by $s_B$ the state assigning a $B$-label to every crossing of $D$, then $j_{\min}(D)=-c-2|s_B|$.

\begin{remark}
Note that the integer $j_{\min}(D)$ depends on the diagram and may differ for two different diagrams representing the same link. It may happen that $Kh_{*,j_{\min}}(D)=0$.
\end{remark}


Next, we review the construction from \cite{GMS} to describe a geometric realization of the extreme Khovanov complex in terms of certain simplicial complex.  

\begin{definition}\label{defLando}
Given a link diagram $D$, write $s_B$ for the state assigning a $B$-label to every crossing of $D$. The Lando graph of $D$, $G_D$, is constructed from $s_BD$ by associating a vertex to each chord having both endpoints in the same circle\footnote{The graph $G_D$ can be thought as the disjoint union of the Lando graphs arising from each of the circles appearing in $s_BD$.}, and adding an edge connecting two vertices if the endpoints of the corresponding chords alternate in the same circle. See Figure \ref{hexagon}(a-d). 
\end{definition}

The independence complex $I(G)$ of a graph $G$ is defined as the simplicial complex whose simplices are the independent subsets of vertices of $G$, that is, the subsets of pairwise non-adjacent vertices of $G$. For the empty graph $G_\emptyset$ we set $I(G_\emptyset) \sim_h S^{-1}$, the sphere of dimension $-1$.

\begin{remark}\label{loop}
If $G$ contains a loop based in a vertex $v$, then $I(G)=I(G-v)$. If $e, e'$ are two edges connecting the same pair of vertices, then $I(G) = I (G-e)$. Moreover, the independence complex of the disjoint union of two graphs equals the join of their independence complexes, that is, 
 $I(G_1 \sqcup G_2) = I(G_1) \ast I(G_2)$.
\end{remark}

\begin{figure}
\centering
\includegraphics[width = 12cm]{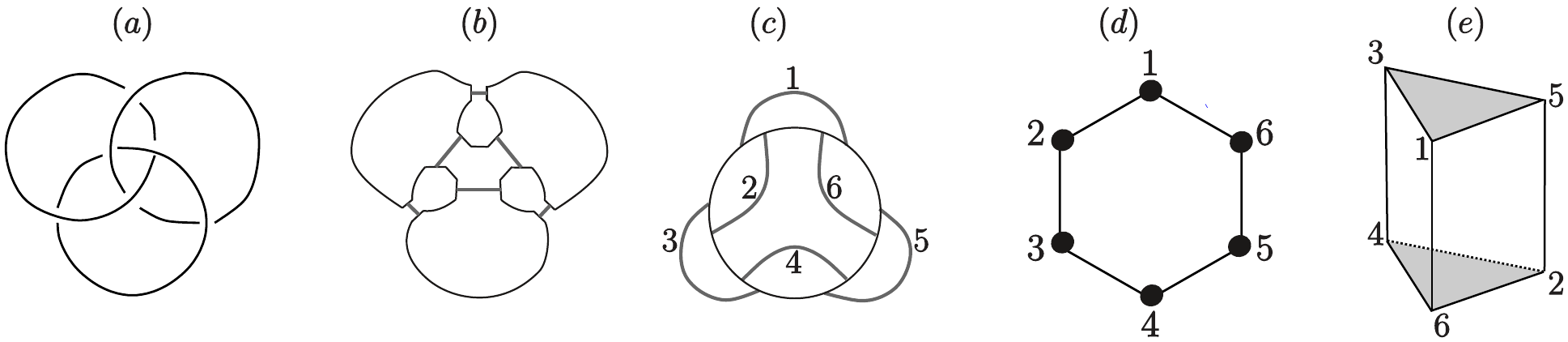}
\caption{\small{(a) A link diagram $D$; (b)-(c) the resolution $s_BD$; (d) the associated Lando graph $G_D$; (e) the geometric realization of $I_D$.}}
\label{hexagon}
\end{figure}

Given a diagram $D$, the (simplicial) complex $I_D$ is defined as the independence complex of its associated Lando graph, i.e., $I_D=I(G_D)$. Figure \ref{hexagon}(e) illustrates this definition. 

\begin{theorem}\cite{GMS}\label{teoGMS}
Let $D$ be a link diagram with $c$ crossings, and set $j=j_{\min}(D)$. Then, the chain complex $\{C_{i}(I_D), d_i\}$ is isomorphic to the extreme Khovanov complex $\{C_{i,j}(D), \partial_i\}$. In particular,
$$Kh_{2i-c,j}(D)= \tilde H_{i-1}(I_D).$$
\end{theorem}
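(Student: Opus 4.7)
The plan is to construct an explicit isomorphism of chain complexes between the (reduced) simplicial chain complex of $I_D$ and the extreme Khovanov complex $\{C_{i,j_{\min}}(D), \partial_{i,j_{\min}}\}$; the homology identification then follows. First I would identify which enhanced states $(s,\epsilon)$ attain $j(s,\epsilon) = j_{\min}(D)$. Since $j(s,\epsilon) = \sigma(s) + 2\tau(s,\epsilon)$ and $\tau(s,\epsilon) \geq -|s|$, equality forces $\tau(s,\epsilon) = -|s|$: every circle of $sD$ carries the sign $-1$, so $\epsilon$ is determined by $s$. Writing $a = |s^{-1}(A)|$ and using $\sigma(s) = 2a - c$, the equation $\sigma(s) - 2|s| = -c - 2|s_B|$ collapses to $|s| = |s_B| + a$, meaning that flipping the $a$ chosen crossings from the all-$B$ state must increase the total circle count by exactly $a$.

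Next I would translate this into a condition on the Lando graph $G_D$. Flipping a single chord $v$ from $B$ to $A$ in $s_BD$ increases the circle count by $1$ if both endpoints of $v$ lie on the same circle, and decreases it by $1$ otherwise; hence every $A$-labeled crossing in an extremal state must be a vertex of $G_D$. A standard chord-diagram argument then shows that a set of such vertices can be flipped with each flip producing a split (rather than a merge) if and only if no two of the chosen chords interleave on a common circle, i.e., the set is independent in $G_D$. Consequently, extremal enhanced states are in canonical bijection with the faces of $I_D$, with $s_B$ corresponding to the empty face.

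Finally I would match gradings and differentials. A face $\{v_1, \dots, v_k\}$ of dimension $k-1$ yields $a = k$ and $i(s) = 2k - c$, giving the stated shift. The Khovanov differential sends the corresponding $s$ to the enhanced states obtained by switching one $v_j$ from $A$ back to $B$; since $\{v_1, \ldots, v_k\} \setminus \{v_j\}$ is still independent and enhancements are forced, this target is again extremal and corresponds to removing $v_j$ from the face, so the underlying map is the simplicial boundary. The Khovanov signs $(-1)^k$, computed from a fixed total order of crossings, reproduce the simplicial boundary signs after a standard state-dependent sign twist, which establishes the chain isomorphism and thus $Kh_{2i-c, j_{\min}}(D) \cong \tilde H_{i-1}(I_D)$.

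The main obstacle will be the chord-interleaving argument in the second step: while a single flip is easy to analyze, a simultaneous flip of $k$ Lando vertices must produce a net gain of exactly $k$ circles, and one must track how each flip redistributes the endpoints of the remaining chords onto the two newly created circles, showing that any interleaving pair spoils the gain. Once this combinatorial input is in place, the grading and sign verifications are essentially bookkeeping.
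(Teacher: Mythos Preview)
This theorem is not proved in the present paper at all: it is quoted verbatim from \cite{GMS} and used as a black box, so there is no ``paper's own proof'' to compare against. What you have written is in fact a faithful outline of the argument in \cite{GMS}: identify the extremal enhanced states as those with all circles labeled $-1$ and $|s|=|s_B|+a$, recognize this as the independence condition in $G_D$, and then match the simplicial boundary with the Khovanov differential up to a diagonal sign change. Your identification of the interleaving step as the only nontrivial point is accurate; once one checks that flipping a set of admissible chords yields a gain of exactly one circle per chord if and only if the set is independent, the rest is bookkeeping exactly as you describe.
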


It follows from \cite{FedSil} that the complex $I_D$ is stably homotopy equivalent to the Khovanov spectrum introduced by Lipshitz and Sarkar at its extreme quantum grading $\mathcal{X}^{j_{\min}}(D)$ (see \cite{lipshitzsarkar}). Sometimes we refer to $I_D$ as the geometric realization of the extreme Khovanov homology of $D$, and to the homotopy type of $I_D$ as the extreme Khovanov homotopy type of the diagram.

\section{Independence complexes}\label{Section 3}

\subsection{Basic results}\label{secbasic}

We summarize some results concerning independence complexes which will be useful in the next sections (see e.g. \cite{JACO}). 

Given a vertex $v$ of a graph $G$, write $N_G(v)$ for the set of adjacent vertices to $v$ in $G$ and define\footnote{These are particular cases of the most general concept of link and star of a vertex in a simplicial complex.} the star of $v$ in $G$ as  $st_G(v)=N_G(v) \cup \{v\}$.

\begin{definition}
Given two vertices $v, w$ of a graph $G$, we say that $v$ dominates $w$  if $N_G(w)\setminus \{v\} \subset N_G(v)\setminus \{w\}$. We write $v > w$.
\end{definition}

\begin{lemma}[Domination Lemma]\label{domlemma}
Let $v, w$ be two vertices of a graph $G$ such that $v$ dominates $w$. 
\begin{enumerate}
\item If $v$ and $w$ are not adjacent in $G$, then $I(G) \sim_h I(G-v)$.
\item If $v$ and $w$ are adjacent in $G$, then $I(G) \sim_h I(G-v) \vee \Sigma I(G-st(v))$, where $\vee$ denotes the wedge of two simplicial complexes.
\end{enumerate}
\end{lemma}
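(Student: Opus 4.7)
The plan is to use the standard pushout decomposition of the independence complex relative to the vertex $v$. Write $st_{I(G)}(v)$ for the closed star of $v$ in $I(G)$ (the simplices $\sigma$ with $\sigma\cup\{v\}\in I(G)$) and $lk_{I(G)}(v)$ for its link. By definition of independence complex one identifies $lk_{I(G)}(v) = I(G-st_G(v))$ and the deletion $I(G)\setminus\{v\}$ with $I(G-v)$. Then
\[
 I(G) \;=\; st_{I(G)}(v)\ \cup_{I(G-st_G(v))}\ I(G-v),
\]
and since $st_{I(G)}(v)=\{v\}\ast I(G-st_G(v))$ is a cone, collapsing it yields
\[
 I(G)\ \sim_h\ I(G-v)\ \cup_{I(G-st_G(v))}\ C(I(G-st_G(v))),
\]
i.e.\ the mapping cone of the inclusion $\iota\colon I(G-st_G(v))\hookrightarrow I(G-v)$. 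The key observation extracted from the domination hypothesis, used in both cases, is that $N_G(w)\subseteq st_G(v)$: indeed $N_G(w)\setminus\{v\}\subseteq N_G(v)\setminus\{w\}\subseteq st_G(v)$, and $v$ itself lies in $st_G(v)$.

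For part (1), assume $v$ and $w$ are not adjacent. Then $w\notin st_G(v)$, so $w$ is a vertex of $G-st_G(v)$; and since $N_G(w)\subseteq st_G(v)$ the vertex $w$ has no neighbour in $G-st_G(v)$. Hence $w$ is isolated in $G-st_G(v)$, so $I(G-st_G(v))=\{w\}\ast I(G-st_G(v)-w)$ is a cone and is contractible. A mapping cone on a contractible domain deformation retracts onto its target, giving $I(G)\sim_h I(G-v)$.

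For part (2), assume $v$ and $w$ are adjacent, so $w\in st_G(v)$ and $w$ is not a vertex of $G-st_G(v)$. However $w$ \emph{is} a vertex of $G-v$, and for every simplex $\sigma\in I(G-st_G(v))$ we have $\sigma\cap N_G(w)\subseteq \sigma\cap st_G(v)=\emptyset$, so $\sigma\cup\{w\}$ is independent in $G-v$. Consequently the inclusion $\iota$ factors through the cone $\{w\}\ast I(G-st_G(v))\subset I(G-v)$, so $\iota$ is null-homotopic. By the standard fact that the mapping cone of a null-homotopic map $A\to X$ between CW-complexes is homotopy equivalent to $X\vee \Sigma A$, we conclude
\[
 I(G)\ \sim_h\ I(G-v)\ \vee\ \Sigma\, I(G-st_G(v)).
\]

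The only delicate point is checking that the coning-off arguments genuinely take place as subcomplex inclusions with the cofibration property, so that the pushout/mapping-cone equivalences are valid on the nose; once the simplicial identification $lk_{I(G)}(v)=I(G-st_G(v))$ and the set-theoretic containment $N_G(w)\subseteq st_G(v)$ are in hand, everything else is formal.
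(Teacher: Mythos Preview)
Your proof is correct and follows essentially the same approach the paper takes: the paper derives the Domination Lemma from Proposition~\ref{Prop3.3}, which asserts that $I(G)\sim_h I(G-v)\vee\Sigma I(G-st(v))$ whenever $I(G-st(v))$ is contractible in $I(G-v)$, and your argument both establishes this proposition via the mapping-cone decomposition and verifies its hypothesis using exactly the observation that the dominated vertex $w$ supplies a cone point (an isolated vertex of $G-st_G(v)$ in case~(1), and an apex over $I(G-st_G(v))$ inside $I(G-v)$ in case~(2)). Your write-up is simply a self-contained unpacking of what the paper cites.
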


Domination Lemma is a consequence of the following more general result:

\begin{proposition}\label{Prop3.3}
Let $v$ ve a vertex of a graph $G$. If $I(G-st(v))$ is contractible in $I(G-v)$, then $$I(G) \sim_h I(G-v) \vee \Sigma I(G-st(v)).$$
\end{proposition}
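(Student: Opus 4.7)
The plan is to realize $I(G)$ explicitly as a mapping cone and then invoke the standard fact that the mapping cone of a null-homotopic map splits as a wedge. First I would partition the simplices of $I(G)$ according to whether they contain $v$ or not. The simplices not containing $v$ form exactly $I(G-v)$. The simplices containing $v$ are of the form $\{v\} \cup \sigma$ with $\sigma$ an independent set of $G-st(v)$; these assemble into the cone $v * I(G-st(v))$. The intersection of these two subcomplexes is precisely $I(G-st(v))$, viewed as a subcomplex of $I(G-v)$ via the natural inclusion $\iota : I(G-st(v)) \hookrightarrow I(G-v)$. Consequently,
$$I(G) \;=\; I(G-v) \,\cup_{\iota}\, \bigl(v * I(G-st(v))\bigr),$$
which is by definition the mapping cone $\mathrm{Cone}(\iota)$ of the inclusion $\iota$.

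Next I would appeal to the well-known principle that if $f : A \to X$ is null-homotopic, then $\mathrm{Cone}(f) \simeq_h X \vee \Sigma A$. A short argument: choose a homotopy $H : A \times [0,1] \to X$ from $f$ to a constant map, and use $H$ to define a map from $\mathrm{Cone}(f)$ to $X \vee \Sigma A$ sending the $X$-part identically to $X$, and expressing the cone $CA$ as two concatenated cones, one of which is collapsed via $H$ into $X$ while the other becomes a hemisphere of $\Sigma A$. The hypothesis that $I(G-st(v))$ is contractible in $I(G-v)$ is exactly the statement that $\iota$ is null-homotopic, so this principle gives
$$I(G) \;=\; \mathrm{Cone}(\iota) \;\simeq_h\; I(G-v) \vee \Sigma I(G-st(v)),$$
as claimed.

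The only step that requires real care is verifying the pushout decomposition in the first paragraph (checking that the union really is $I(G)$ and that the intersection really is $I(G-st(v))$); the wedge splitting in the second paragraph is a purely homotopy-theoretic fact that can be quoted. Hence I do not expect any genuine obstacle; the content of the proposition is essentially that the two standard moves — decomposing $I(G)$ via $v$ and collapsing a null-homotopic attaching map — combine to yield the advertised wedge decomposition, and Lemma~\ref{domlemma} will then follow by observing that domination forces $\iota$ to factor through the star of the dominating vertex (which is a cone, hence contractible) in $I(G-v)$.
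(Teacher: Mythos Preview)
Your argument is correct and is precisely the standard proof of this result: decompose $I(G)$ as the union of $I(G-v)$ and the cone $v*I(G-st(v))$ along $I(G-st(v))$, recognize this as the mapping cone of the inclusion $\iota$, and then use that the mapping cone of a null-homotopic map between CW complexes splits as $X\vee\Sigma A$. The verification of the pushout decomposition is straightforward, and the cofibration/basepoint issues needed for the wedge splitting are harmless here since everything is simplicial.

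Note, however, that the paper does \emph{not} actually prove Proposition~\ref{Prop3.3}; it is stated without proof in Section~\ref{secbasic} among the ``basic results'' summarized from the literature (with a blanket reference to \cite{JACO}), and is then used as a black box throughout. So there is no paper proof to compare against. Your write-up supplies exactly the argument one finds in \cite{JACO} (or in any treatment of independence complexes), and your closing remark about how Lemma~\ref{domlemma} follows --- factoring $\iota$ through the contractible star of the dominated vertex $w$ in $I(G-v)$ --- is also the standard deduction.
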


A vertex $v\in G$ of degree one is called a \textit{leaf} and its unique adjacent vertex a \textit{preleaf}. The following result is a direct consequence of Domination Lemma.

\begin{corollary}\label{leaf}
Let $w$ be a leaf of a graph $G$ and let $v$ be its associated preleaf. Then, $$I(G) \sim_h \Sigma I(G-st(v)).$$
\end{corollary}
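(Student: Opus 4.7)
The plan is to deduce the corollary directly from the Domination Lemma, with one small auxiliary observation. First, since $w$ is a leaf with preleaf $v$, the only neighbor of $w$ in $G$ is $v$, so $N_G(w) \setminus \{v\} = \emptyset$, which is trivially contained in $N_G(v) \setminus \{w\}$. Hence $v$ dominates $w$, and since $v$ and $w$ are adjacent, part~(2) of Lemma~\ref{domlemma} yields
$$I(G) \sim_h I(G-v) \vee \Sigma I(G - st(v)).$$

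The remaining step is to show that $I(G-v)$ is contractible. The key observation is that once $v$ is deleted, the leaf $w$ loses its unique neighbor and becomes an isolated vertex in $G-v$. An isolated vertex is independent from every other vertex, so $\{w\}$ extends every independent set; equivalently, $I(G-v)$ is the simplicial cone with apex $w$ over $I(G - \{v,w\})$, which is contractible.

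Combining the two steps, the wedge of a contractible space with $\Sigma I(G - st(v))$ is homotopy equivalent to $\Sigma I(G - st(v))$ itself, so
$$I(G) \sim_h \Sigma I(G - st(v)),$$
as required. I do not anticipate any real obstacle here: the corollary is a convenient repackaging of the adjacent case of the Domination Lemma, relying only on the elementary fact that a graph with an isolated vertex has contractible independence complex. If one wanted to avoid invoking the Domination Lemma, one could instead apply Proposition~\ref{Prop3.3} to the vertex $w$: then $G - st(w) = G - \{v,w\}$, and $I(G - w)$ contains $v$ as a free vertex, so the same cone argument shows contractibility of $I(G-w)$ and one arrives at the same conclusion up to relabeling.
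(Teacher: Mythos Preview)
Your main argument is correct and matches the paper's approach exactly: the paper simply states that the corollary is a direct consequence of the Domination Lemma, and you have spelled out precisely that deduction (including the observation that $w$ becomes isolated in $G-v$, making $I(G-v)$ contractible).

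One small caveat: your closing alternative via Proposition~\ref{Prop3.3} applied to $w$ does not work as stated. In $G-w$ the vertex $v$ need not be isolated (the preleaf $v$ may have neighbors other than $w$), so $I(G-w)$ is not a cone on $v$ and is generally not contractible; for instance, if $G$ is the path $a\!-\!v\!-\!w$ then $I(G-w)\sim_h S^0$. This does not affect your main proof, which stands on its own.
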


\begin{proposition}[Generalized Csorba]\cite{Csorba, JACO} \label{genCsorba}
Let $G$ be a graph with two vertices $v_1$ and $v_2$ connected by a path $L$ of length 3 (the two internal vertices of $L$ having degree two), and let $H$ be the graph obtained from $G$ by contracting $L$. Then $I(G) \sim_h \Sigma I(H)$. 
\end{proposition}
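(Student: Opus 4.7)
The plan is to apply Proposition \ref{Prop3.3} to $G$ at $v = v_1$, which identifies $I(G)$ with the mapping cone of the natural subcomplex inclusion
$$\iota \colon I(G - st_G(v_1)) \hookrightarrow I(G - v_1).$$
I will then use Corollary \ref{leaf} to recognize both sides of $\iota$ as suspensions of independence complexes defined on $H$.

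For the target: in $G - v_1$ the vertex $u_1$ loses its only neighbor other than $u_2$ and so becomes a leaf with preleaf $u_2$. Corollary \ref{leaf} gives $I(G - v_1) \sim_h \Sigma I((G - v_1) - st(u_2))$, and since $st_{G - v_1}(u_2) = \{u_1, u_2, v_2\}$, the subgraph $(G - v_1) - st(u_2) = G - \{v_1, u_1, u_2, v_2\}$ is exactly $H - v$. For the source: in $G - st_G(v_1)$ the vertex $u_1 \in N_G(v_1)$ is already gone, so $u_2$ becomes a leaf with preleaf $v_2$. Corollary \ref{leaf} then gives $I(G - st_G(v_1)) \sim_h \Sigma I(G - st_G(v_1) - st_G(v_2))$; using the identity $V(G) \setminus (st_G(v_1) \cup st_G(v_2)) = V(H) \setminus st_H(v)$ with matching edges, this is $\Sigma I(H - st_H(v))$.

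The crucial step is to verify that, under these two leaf-corollary equivalences, $\iota$ corresponds to the suspension of the natural subcomplex inclusion $j \colon I(H - st_H(v)) \hookrightarrow I(H - v)$. Once this is established, and noting that $I(H)$ is itself the mapping cone of $j$ (by another application of Proposition \ref{Prop3.3}, this time to $H$ at $v$), I will conclude using the fact that reduced suspension commutes with mapping cones of pointed maps:
$$I(G) \sim_h \mathrm{mcone}(\iota) \sim_h \mathrm{mcone}(\Sigma j) \sim_h \Sigma\,\mathrm{mcone}(j) \sim_h \Sigma I(H).$$

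The main obstacle is the compatibility check in the previous paragraph: the two leaf-corollary reductions use \emph{different} leaves ($u_1$ on the target side and $u_2$ on the source side), so the induced suspension structures are not visibly compatible with $\iota$. To handle this, I plan to rewrite each leaf-corollary equivalence as an instance of Proposition \ref{Prop3.3} applied to the corresponding \emph{preleaf} rather than the leaf: in both cases the complement of the preleaf is a cone on the now-isolated leaf and is hence contractible, so the mapping cone computes the suspension directly, and fitting the two resulting pushout squares into a single commutative diagram identifies the induced map on suspensions with $\Sigma j$. As a backup, one could construct a discrete Morse matching on $I(G)$ pivoted on $u_1$ and $u_2$ whose critical cells biject with those of $\Sigma I(H)$ and verify the attaching maps directly.
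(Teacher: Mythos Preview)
The paper does not prove this proposition; it is quoted from \cite{Csorba, JACO} without argument, so there is no in-paper proof to compare against. I will therefore evaluate your proposal on its own merits.

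Your overall architecture---express $I(G)$ and $I(H)$ as mapping cones at $v_1$ and at $v$, identify the source and target of $\iota$ with suspensions via leaf reductions, and then invoke naturality---is a reasonable line of attack, and your identifications $I(G-v_1)\sim_h\Sigma I(H-v)$ and $I(G-st_G(v_1))\sim_h\Sigma I(H-st_H(v))$ are correct. Two points, however, need attention.

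First, a minor one: Proposition~\ref{Prop3.3} as stated in the paper does \emph{not} give you the mapping cone description of $I(G)$; it only gives the wedge splitting under the extra hypothesis that $I(G-st(v))$ is contractible in $I(G-v)$. What you actually need is the standard cofiber sequence $I(G-st(v))\hookrightarrow I(G-v)\to I(G)$, valid for any vertex $v$. This is well known and easy, but you should invoke it explicitly rather than cite Proposition~\ref{Prop3.3}.

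Second, and this is the real gap: the compatibility step---showing that $\iota$ corresponds to $\Sigma j$ under your two equivalences---is the entire content of the proof, and your sketch does not go through as written. You propose to realise each suspension as the mapping cone at the relevant \emph{preleaf}: $u_2$ on the target side, $v_2$ on the source side. But then to ``fit the two resulting pushout squares into a single commutative diagram'' you would need a map between the ``contractible'' corners, i.e.\ between $I\bigl((G-st(v_1))-v_2\bigr)$ and $I\bigl((G-v_1)-u_2\bigr)$. Neither of these graphs is an induced subgraph of the other: the first contains $u_2$ but not $v_2$, the second contains $v_2$ but not $u_2$. So there is no natural inclusion in either direction, and the cube you want simply does not exist at the level of simplicial complexes. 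A fix is possible---for instance, one can take the mapping cone at the \emph{same} vertex $v_2$ on both sides and then carefully track how the resulting map of pushouts behaves under collapsing the contractible pieces---but this requires genuine homotopy-theoretic bookkeeping that your proposal does not supply. The backup via discrete Morse theory is also plausible but would need to be carried out in full.
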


We recall some results about the homotopy type of the independence complex of some well-known families of graphs:

\begin{proposition}\label{Koz} \cite{kozlov} \mbox{ } 
\begin{enumerate}
\item
Let $L_n$ be the $n$-path (i.e., the line graph consisting of $n+1$ vertices connected by $n$ edges). Then
$$I(L_n) \sim_h
\left\{
  \begin{array}{cll}
     * & & \mbox{if } n=3k, \\
     S^k & & \mbox{if } n=3k+1, \, 3k+2.
  \end{array}
  \right.
  $$
\item If $T$ is a forest, then its independence complex is either contractible or homotopy equivalent to a sphere. The exact homotopy type can be found in polynomial time by repeated use of Corollary \ref{leaf}.
\item Let $P_n$ be the cycle graph of order $n$, that is, the $n$-gon. Then
$$I(P_n) \sim_h
\left\{
  \begin{array}{lll}
     S^{k-1}  & & \mbox{if } n=3k \pm 1, \\
     S^{k-1} \vee S^{k-1} & & \mbox{if } n=3k.
  \end{array}
  \right.
  $$
  \end{enumerate}
\end{proposition}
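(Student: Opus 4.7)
The plan is to prove each of the three parts by induction, using the leaf-removal principle (Corollary~\ref{leaf}) in parts~(1) and~(2), and the Generalized Csorba reduction (Proposition~\ref{genCsorba}) for part~(3). In each case the induction step is an immediate application of these lemmas, while a small number of base cases must be computed by direct inspection of the corresponding independence complex.

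For part~(1), label the vertices of $L_n$ as $v_0, v_1, \ldots, v_n$. For $n \geq 3$ the vertex $v_0$ is a leaf with preleaf $v_1$, and $L_n - st(v_1) = L_{n-3}$ (the subpath on $v_3, \ldots, v_n$). Corollary~\ref{leaf} immediately gives $I(L_n) \sim_h \Sigma\, I(L_{n-3})$. Iterating reduces to the three base cases: $I(L_0)$ is a point (contractible); $I(L_1) \sim_h S^0$ (two isolated vertices, since the two endpoints of the edge are adjacent in $L_1$); and $I(L_2) \sim_h S^0$ (the edge $\{v_0, v_2\}$ together with the isolated vertex $v_1$, which has two connected components). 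Suspending $k$ times gives the three cases according to $n \bmod 3$.

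For part~(2), induct on $|V(G)|$. If the forest $G$ has no edges, then $I(G)$ is a full simplex, hence contractible. Otherwise $G$ contains a leaf $w$ (any acyclic graph with an edge does); letting $v$ be its preleaf, $G - st(v)$ is again a forest with strictly fewer vertices, so by the inductive hypothesis $I(G - st(v))$ is contractible or a sphere. Corollary~\ref{leaf} then gives $I(G) \sim_h \Sigma\, I(G - st(v))$, which is again contractible or a sphere. This recursion is the algorithm: locate a leaf in $O(|V|)$ time, delete the vertices of $st(v)$, and recurse; each step removes at least two vertices, so the number of iterations is at most $|V(G)|/2$ and the total running time is polynomial.

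For part~(3), when $n \geq 6$ pick four consecutive vertices $v_1, x, y, v_2$ on the cycle. The interior vertices $x, y$ have degree $2$ in $P_n$, so Proposition~\ref{genCsorba} applied to the length-$3$ path $v_1 x y v_2$ yields $I(P_n) \sim_h \Sigma\, I(P_{n-3})$. The base cases must be verified by hand: $I(P_3)$ is three isolated points (every two vertices of a triangle are adjacent), hence homotopy equivalent to $S^0 \vee S^0$; $I(P_4)$ is the disjoint union of the edges $\{v_1, v_3\}$ and $\{v_2, v_4\}$, homotopy equivalent to $S^0$; and $I(P_5)$ is itself a pentagon (each vertex is independent of precisely the two vertices at cyclic distance $2$), homotopy equivalent to $S^1$. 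Iterating suspensions yields the stated formulas. The most delicate step is the $n = 3$ base case: it is here that the second sphere summand first appears, and the $n \equiv 0 \pmod 3$ case of the formula is simply the propagation of this wedge by $k-1$ successive suspensions. No genuine obstacle arises; the main care is in correctly tracking these small cases and in checking that the Generalized Csorba contraction indeed turns $P_n$ into $P_{n-3}$ (so that the induction stays inside the class of cycles).
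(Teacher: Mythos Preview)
Your proof is correct. The paper does not give its own proof of this proposition; it is stated as a known result with a citation to Kozlov, so there is nothing to compare against beyond checking that your argument is sound, and it is: the recursions via Corollary~\ref{leaf} for parts~(1) and~(2) and via Proposition~\ref{genCsorba} for part~(3) are exactly the right tools, and your base cases are handled correctly. One very minor omission in part~(2): the base case of the empty graph $G_\emptyset$ (no vertices) gives $I(G_\emptyset)\sim_h S^{-1}$ by the paper's convention rather than a contractible full simplex, but since $S^{-1}$ counts as a sphere the conclusion ``contractible or a sphere'' is unaffected.
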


The following basic lemma on line graphs will be useful in Sections \ref{sectfan} and \ref{sectionc3}. 

\begin{lemma}\label{Lemma123} Consider the line graph $L_n$.
\begin{itemize}
    \item[(1)] Given a vertex $v$ of $L_n$,  then 
either one of $I(L_n)$ or $I(L_n-st(v))$ is contractible, or $I(L_n)\sim_h \Sigma I(L_n-st(v))$. 
\item[(2)] Given a subset $A$ of vertices of $L_n$, then either $I(L_n)$ or $I(L_n-A)$ is contractible, or $I(L_n)$ and $I(L_n-A)$ are spheres and $dim (I(L_n)) \geq dim (I(L_n-A))$.  
\end{itemize}
\end{lemma}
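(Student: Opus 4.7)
The plan is to prove both parts by direct case analysis based on the explicit homotopy type of $I(L_n)$ from Proposition~\ref{Koz}(1). The key structural observation is that removing any vertex set $A \subseteq V(L_n)$ leaves a disjoint union $L_n - A = L_{n_1} \sqcup \cdots \sqcup L_{n_r}$ of shorter paths (adopting the convention $L_{-1} = \emptyset$), with $r \leq |A|+1$ components and $\sum_i (n_i + 1) = n+1 - |A|$. By Remark~\ref{loop} and Proposition~\ref{Koz}(1), the independence complex decomposes as the iterated join $I(L_n - A) \simeq I(L_{n_1}) \ast \cdots \ast I(L_{n_r})$, which is either contractible (iff some $n_i \equiv 0 \pmod 3$) or a single sphere $S^{d}$ with $d = \sum_i \lfloor n_i/3 \rfloor + r - 1$, via $S^p \ast S^q \simeq S^{p+q+1}$ together with $S^{-1} \ast X \simeq X$ for any empty factor.

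For part (1), I would split on the position of $v$. If $v$ is an endpoint, then $L_n - st(v) = L_{n-2}$, and a direct check of $n \bmod 3$ confirms the trichotomy. If $v = v_i$ is an interior vertex, then $L_n - st(v) = L_{i-2} \sqcup L_{n-i-2}$, with the two path lengths summing to $n-4$. Writing each non-contractible factor as $S^{(n_j - r_j)/3}$ with $r_j \in \{1,2\}$, one checks that $r_1 + r_2 = 2$ forces $n \equiv 0 \pmod 3$ (so $I(L_n)$ is contractible and we are done), while $r_1 + r_2 \in \{3,4\}$ gives $\lfloor n/3 \rfloor = \lfloor (i-2)/3 \rfloor + \lfloor (n-i-2)/3 \rfloor + 2$, matching the dimension of $\Sigma I(L_n - st(v))$.

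For part (2), when neither $I(L_n)$ nor $I(L_n - A)$ is contractible, each $n_i \not\equiv 0 \pmod 3$ and $n \not\equiv 0 \pmod 3$, so it suffices to compare $\dim I(L_n) = \lfloor n/3 \rfloor$ with $\dim I(L_n - A) = \sum_i k_i + r - 1$, where $n_i = 3k_i + r_i$ with $r_i \in \{1, 2\}$. Summing the inequalities $n_i \geq 3k_i + 1$ and using $\sum_i n_i = n - |A| + 1 - r$ yields
\[
\dim I(L_n - A) \;\leq\; \frac{n - |A| + r - 2}{3} \;\leq\; \frac{n - 1}{3} \;\leq\; \left\lfloor \frac{n}{3} \right\rfloor,
\]
where the middle step uses $r \leq |A|+1$ and the last step uses $n \not\equiv 0 \pmod 3$. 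I do not anticipate a genuine obstacle; the argument is a finite case analysis with a clean dimension count, and the only technicality is the boundary case in which $v$ is adjacent to an endpoint (so $L_{i-2}$ or $L_{n-i-2}$ is the empty graph $L_{-1}$), which is absorbed uniformly by the conventions $I(G_\emptyset) \simeq S^{-1}$ and $S^{-1} \ast X \simeq X$ so that no case requires separate treatment.
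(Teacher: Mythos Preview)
The paper states Lemma~\ref{Lemma123} without proof, so there is nothing to compare against; your argument is the natural direct verification from Proposition~\ref{Koz}(1) and is correct. The dimension count in part~(2) is clean, and the edge cases (endpoint vertices, empty components $L_{-1}$, single-vertex components $L_0$) are handled correctly by your conventions.
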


Another basic calculation concerns the $\theta$-graph and its subdivisions, which we address in the following example.

\begin{example}\label{exampletheta}
Consider the graph $\theta_{n_1,n_2,n_3}$ obtained from $\theta$-graph by subdividing each of its edges into $n_1$, $n_2$ and $n_3$ pieces, respectively (see Figure~\ref{thetagraph}). The problem of computing $I(\theta_{n_1,n_2,n_3})$ can be reduced by Proposition \ref{genCsorba} to a finite number of cases: $0\leq n_2 \leq 2$, $1\leq n_1,n_3 \leq 3$. Analyzing each of them we conclude that $I(\theta_{n_1,n_2,n_3})$ is either contractible or homotopy equivalent to either a sphere or the wedge of two spheres (possibly of different dimensions). 

In particular, the only case leading to the wedge of two spheres of different dimensions is when $n_1\equiv n_2 \equiv n_3 \equiv 0$ $(mod \, 3)$, since $\theta_{n_1,0,n_3} = P_{n_1}\vee P_{n_3}$ and thus $$I(\theta_{3k_1,3k_2,3k_3})\sim_h S^{k_1+k_2+k_3-1} \vee S^{k_1+k_2+k_3-2}.$$

Furthermore, observe that every proper subgraph $G$ of $\theta_{n_1,n_2,n_3}$ can be reduced by Corollary \ref{leaf} to either a forest or a polygon. Thus $I(G)$ is either contractible or it is homotopy equivalent to a sphere or a wedge of two spheres of the same dimension.

In forthcoming sections we will make use of the following computation: $$I(\theta_{4,2,4}) \sim_h \Sigma^2 I(\theta_{1,2,1}) \sim_h \Sigma^2 I(P_3)\sim_h S^2\vee S^2.$$
\end{example}

\begin{figure}
\centering
\includegraphics[width = 10.5cm]{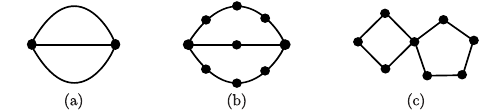}
\caption{\small{The $\theta$-graph and its subdivisions $\theta_{4,2,4}$ and $\theta_{4,0,5}$ are shown in (a), (b) and (c), respectively.}}
\label{thetagraph}
\end{figure}

In the topological part of this paper (Sections \ref{secpositive} and \ref{sectnegative}) we usually work with induced subgraphs of a given graph. However, sometimes we have to remove a special edge (keeping its endpoints). The next result allows us to do so in certain situations.

\begin{proposition}\label{trickremovingedge}
Consider a family of graphs $\mathcal G$ closed under taking induced subgraphs. Assume that for a given edge $e$ of a graph $G \in \mathcal G$ any graph obtained from $G$ by subdividing $e$ (that is replacing $e$ by $L_n$, $n\geq 1$) is also in $\mathcal G$. Then $G-e$ is also in $\mathcal G$.
\end{proposition}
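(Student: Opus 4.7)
My approach is to realize $G-e$ as an induced subgraph of a suitable subdivision of $e$, so that both closure hypotheses on $\mathcal G$ immediately apply. The whole proof should be a one-line combinatorial observation: subdividing $e$ once and then deleting the new vertex is the same operation as removing $e$.

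The plan has just two short steps. First, I would invoke the subdivision hypothesis with $n=2$: let $G'$ be obtained from $G$ by replacing the edge $e=v_1 v_2$ by a path $L_2$ of length two, i.e., inserting a single new vertex $u$ of degree two together with the two edges $v_1 u$ and $u v_2$, and leaving every other edge of $G$ untouched. By assumption, $G' \in \mathcal G$.

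Second, I would pass to the induced subgraph of $G'$ on the vertex set $V(G) = V(G') \setminus \{u\}$. The only edges of $G'$ incident to $u$ are $v_1 u$ and $u v_2$, so removing $u$ deletes exactly these two edges while preserving every original edge of $G$ other than $e$. Therefore $G'[V(G)]$ is literally $G-e$. Since $\mathcal G$ is closed under taking induced subgraphs, $G - e \in \mathcal G$, as required.

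There is essentially no obstacle to overcome; the only point worth flagging is that $n=1$ would be useless (it recovers $G$ itself), so the choice $n=2$ is the minimal value that does the job. Any larger $n$ would work just as well, by deleting all $n-1$ inserted interior vertices.
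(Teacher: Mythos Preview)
Your proof is correct and is essentially identical to the paper's own argument: subdivide $e$ into two pieces (i.e., take $n=2$) and then delete the new middle vertex, obtaining $G-e$ as an induced subgraph of a graph in $\mathcal G$. Your additional remark that any $n\geq 2$ works equally well is also fine.
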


\begin{proof}
Starting from $G$, subdivide $e$ into 2 pieces (i.e. replace $e$ by $L_2$) and then remove the middle vertex of $L_2$. The obtained graph is $G-e$ and it belongs to $\mathcal G$.
\end{proof}

\subsection{Fan, rhomboid and augmented rhomboid graphs}\label{sectfan}
In this section we introduce some families of graphs whose independence complexes are  crucial in Sections \ref{secpositive} and \ref{sectnegative} when analyzing (extreme) Khovanov homotopy type  associated to 4-braid diagrams.

The join of two graphs $G_1$ and $G_2$, denoted by $G_1*G_2$, is the one skeleton of the join in the category of simplicial complexes. The cone $G$ of a graph $H$ is the join of $H$ with an isolated vertex $K_1$ that we call apex of $G$, that is, $G=cone_1(H) = K_1 * H$.

\begin{lemma}\label{KjH} Let $H$ be a non-empty graph and denote by $K_j$ the complete graph on $j$ vertices. Then, $I(K_j*H) \sim_h I(H) \vee \underbrace{S^0\vee S^0\vee ...\vee S^0}_{j}$.
\end{lemma}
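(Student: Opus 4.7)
The plan is to describe $I(K_j * H)$ completely explicitly and then recognize the resulting space as the claimed wedge; there is essentially no topological difficulty, only a structural observation combined with an empty-complex convention.

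First I would note the crucial feature of the join: every vertex $v \in V(K_j)$ is adjacent in $K_j * H$ to every other vertex of $K_j * H$ (to the remaining $j-1$ vertices of $K_j$ by the completeness of $K_j$, and to every vertex of $H$ by definition of the join). Consequently, any independent set $S$ of $K_j * H$ can contain at most one vertex of $K_j$, and if it contains a vertex of $K_j$ it cannot contain any vertex of $H$. Therefore the independent sets split into two disjoint families: independent sets of $H$, and the $j$ singletons $\{v\}$ for $v \in V(K_j)$. As a simplicial complex this gives the identification
\[
I(K_j * H) \;=\; I(H) \,\sqcup\, \{v_1\} \,\sqcup\, \{v_2\}\,\sqcup\,\cdots\,\sqcup\, \{v_j\},
\]
a disjoint union of $I(H)$ with $j$ isolated vertices.

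Next I would conclude. Since $H$ is non-empty, $I(H)$ is non-empty (its vertex set contains $V(H)$), so I can pick a basepoint $x_0 \in I(H)$. Adding one isolated point to a non-empty space is, up to homotopy, the same as wedging on $S^0$ at $x_0$ (thinking of $S^0$ as two points, one of which is identified with $x_0$). Iterating this $j$ times gives $I(K_j * H) \sim_h I(H) \vee \bigvee_{i=1}^{j} S^0$, as required.

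As a sanity check and alternative in the spirit of the paper, I would verify the same formula inductively via the Domination Lemma. For $j \geq 1$ pick any $v \in V(K_j)$; any other vertex $w \in V(K_j)$ is adjacent to $v$ and has the same neighborhood in $K_j * H$ outside $\{v,w\}$, so $v$ and $w$ dominate each other. Since $v$ is adjacent to every other vertex, $st(v) = V(K_j * H)$, so $(K_j * H) - st(v) = \emptyset$ and $(K_j * H) - v = K_{j-1} * H$. The adjacent case of Lemma \ref{domlemma} then yields
\[
I(K_j * H) \sim_h I(K_{j-1} * H) \vee \Sigma\, I(\emptyset) \sim_h I(K_{j-1} * H) \vee S^0,
\]
using the conventions $I(\emptyset) \sim_h S^{-1}$ and $\Sigma S^{-1} \sim_h S^0$. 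Induction on $j$ (with base case $K_0 * H = H$) closes the argument. The only possible pitfall here is the empty-complex bookkeeping, i.e.\ remembering that $\Sigma S^{-1}$ is $S^0$ rather than a point, and using the hypothesis that $H$ is non-empty so that the wedge has a well-defined basepoint.
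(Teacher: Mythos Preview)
Your first argument is correct and is exactly the paper's proof: the paper's one-line justification is that ``vertices in $K_j$ become isolated vertices in $I(K_j*H)$,'' which is precisely your observation that the independent sets decompose as $I(H)$ together with the $j$ singletons, so that wedging with $j$ copies of $S^0$ follows. Your inductive alternative via the Domination Lemma is also fine (for the step $j=1$ take $w\in V(H)$ rather than $w\in V(K_j)$), but it is not needed.
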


\begin{proof}
The result follows from the fact that vertices in $K_j$ become isolated vertices in $I(K_j*H)$. 
\end{proof}

\begin{definition}
A graph $F_n$ is called a \textit{simple fan} if $F_n = cone_1(L_n)$. A graph is said to be a \textit{fan} if it can be obtained from a simple fan by subdividing some of the edges of $L_n$. See Figure \ref{fansimple}.
\end{definition}

\begin{figure}
    \centering
\includegraphics[width = 11cm]{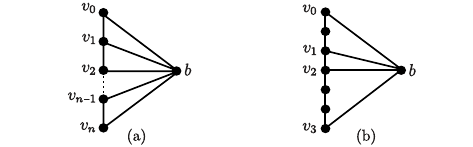}
\caption{\small{The simple fan $F_n$ and a fan obtained from the simple fan $F_3$ are shown in (a) and (b), respectively.}}\label{fansimple}
\end{figure}

It follows from Lemma \ref{KjH} that $I(F_n) \sim_h I(L_n) \vee S^0$.

\begin{proposition}\label{Propfan} Let $G$ be an induced subgraph of a fan. Then $I(G)$ is either contractible or homotopy equivalent to the wedge of at most two spheres. \
\end{proposition}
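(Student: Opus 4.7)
The plan is to proceed by strong induction on $|V(G)|$, following a case analysis on the relation of $G$ to the apex $v$ of the underlying fan.

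First, if $v \notin G$, then $G$ is an induced subgraph of the subdivided base path, hence a forest; Proposition~\ref{Koz}(2) directly yields that $I(G)$ is contractible or a single sphere.

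Next, assume $v \in G$ and let $R = N_G(v)$ be the set of red vertices adjacent to $v$ in $G$. If $|R|=0$, then $v$ is graph-isolated, so $I(G)$ is a cone and hence contractible. If $|R|=1$ with unique red neighbor $u$, the relation $N_G(v)\setminus\{u\}=\emptyset$ gives $u > v$, and Lemma~\ref{domlemma}(2) yields
\[ I(G) \sim_h I(G-u) \vee \Sigma I(G-st(u)). \]
In $G-u$ the apex $v$ becomes graph-isolated so $I(G-u)$ is contractible; and $G-st(u)$ is apex-free, hence a forest whose independence complex is contractible or a sphere by Proposition~\ref{Koz}(2). Thus $I(G)$ is contractible or a single sphere.

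For $|R|\geq 2$, the plan is to apply Proposition~\ref{Prop3.3} with the apex $v$:
\[ I(G) \sim_h I(G-v) \vee \Sigma I(G-st(v)). \]
Both $G-v$ and $G-st(v)$ are forests, so each summand is contractible or a single sphere by Proposition~\ref{Koz}(2), giving the desired wedge of at most two spheres. The null-homotopy hypothesis of Proposition~\ref{Prop3.3} is automatic when either complex is contractible or when Lemma~\ref{Lemma123}(2) forces a strict dimension drop. The main obstacle is the residual case in which $I(G-v)$ and $I(G-st(v))$ are spheres of the same dimension and the inclusion need not be null-homotopic. I would handle it by replacing $v$ by a well-chosen red vertex $u\in R$ in Proposition~\ref{Prop3.3}: then $G-st(u)$ is apex-free (a forest, contributing at most one sphere), and $G-u$ is a strictly smaller induced subfan with $|R|-1$ red neighbors of $v$, to which the inductive hypothesis applies. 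Complementary simplifications---Corollary~\ref{leaf} for leaf removal and Proposition~\ref{genCsorba} for collapsing length-three subdivision arcs---ensure that at least one reduction is always available. The technical heart is to verify that these reductions combine so as to keep the final count of wedge summands at most two; in the failure case of the apex reduction, careful analysis typically shows that $I(G-u)$ is automatically contractible or a single sphere, invoking Lemma~\ref{KjH} for the simple-fan endgame.
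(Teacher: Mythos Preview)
Your proposal has a genuine gap that you yourself flag but do not close. In the main case $|R|\geq 2$ you want to apply Proposition~\ref{Prop3.3} at the apex $v$, but you correctly note that the null-homotopy hypothesis may fail when $I(G-v)$ and $I(G-st(v))$ are spheres of equal dimension. Your proposed fix---switching to a red vertex $u\in R$---does not resolve the issue, for two reasons. First, you never verify the null-homotopy hypothesis for the $u$-reduction either: the inclusion $I(G-st(u))\hookrightarrow I(G-u)$ is no more obviously null-homotopic than the apex one. Second, and more seriously, even granting that hypothesis the induction does not close on the sphere count: the inductive hypothesis only gives $I(G-u)$ at most \emph{two} spheres, and $\Sigma I(G-st(u))$ contributes up to one more, yielding a potential wedge of three spheres. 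Your claim that ``careful analysis typically shows that $I(G-u)$ is automatically contractible or a single sphere'' is exactly the point that needs proof, and invoking Lemma~\ref{KjH} does not help here, since for a simple fan $I(F_n)\sim_h I(L_n)\vee S^0$ is already a wedge of two spheres in general.

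The paper's argument avoids all of this by a different route: after stripping leaves via Corollary~\ref{leaf}, one observes that the remaining leafless fan can be reduced by Csorba moves (Proposition~\ref{genCsorba}) directly to a cone $cone_1(H)$ over a disjoint union of paths $H$. Then Lemma~\ref{KjH} gives $I(G)\sim_h \Sigma^k(I(H)\vee S^0)$ in one stroke, with $I(H)$ contractible or a single sphere by Proposition~\ref{Koz}. No null-homotopy check and no delicate induction on the sphere count are needed. If you want to salvage your approach, you would need to prove a lemma guaranteeing that in the equal-dimension failure case the apex inclusion is in fact null-homotopic (or that some specific red vertex $u$ makes $I(G-u)$ a single sphere); short of that, the Csorba-to-cone reduction is the cleaner path.
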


\begin{proof}
If $b\notin G$, then $G$ is a forest and the result follows from Proposition \ref{Koz}(2). Thus, assume that $G$ contains $b$ and it has no leaves (otherwise apply Corollary \ref{leaf}). If $I(G-st(b))$ is contractible, then by Proposition \ref{Prop3.3} $I(G)~\sim_h~I(G-b)$, with $G-b$ a disjoint union of paths. Otherwise, $G$ can be reduced by a finite number $k$ of Csorba moves (Proposition \ref{genCsorba}) to $cone_1(H)$, with $H$ a disjoint union of paths. Thus $I(G) \sim_h \Sigma^k (I(H) \vee S^0$) and Proposition \ref{Koz} completes the proof.   
\end{proof}




\begin{definition}\label{defallrhomboids} \ 
\begin{enumerate}
\item Consider $S^0$ with vertices $b_{\ell}$ and $b_r$. The {\it{simple rhomboid graph}} $SR_n$ is obtained from the join $S^0*L_n$ after adding four paths of length $2$ connecting each $b_{\ell}$ and $b_r$ with each of the two vertices of degree 1 in $L_n$. Labeling of the vertices from Figure \ref{defrhomboids}(a) will be used throughout the paper. 
\item The {\it{simple connected rhomboid graph}} $SR^{con}_n$ is obtained from $SR_n$ by adding an edge connecting $b_{\ell}$ and $b_r$, as shown in Figure \ref{defrhomboids}(a) - including grey edge.
\item The {\it{simple augmented rhomboid graph}} $SR_n^{aug}$ is constructed from $SR_n - \{v_{-1}', v_{n+1}'\}$ by adding vertices $b_2$, $c_i$ and $d_i$, for $i=1,2,3$, with connections as defined in Figure \ref{defrhomboids}(c). 
\end{enumerate} 
Given a graph $G$ as above, vertices $v_i$ are called spoke vertices, for $1 \leq i \leq n$, and the $n$-path connecting them is the {\it{spine}} of $G$. The {\it{extended spine}} of $G$ consist of its spine together with edges connecting $v_0$ and $v_{-1}$ and $v_n$ and $v_{n+1}$. If $H$ is an induced subgraph of $G$, then $H^{spine}$ denotes the intersection of $H$ and the spine of $G$.  \end{definition}

\begin{figure}
    \centering
\includegraphics[width = 11cm]{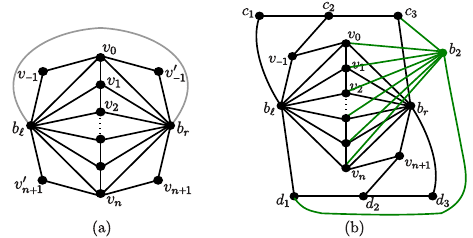}
\caption{\small{The simple rhomboid $SR_n$ is shown in black in (a). If we additionally consider the grey edge connecting $b_\ell$ and $b_r$, then we get the connected rhomboid $SR_n^{con}$. The simple augmented rhomboid $SR_n^{aug}$ is shown in (b).}}\label{defrhomboids}
\end{figure}

We next analyze the independence complex of each of the graphs defined above and these of their induced subgraphs.

\begin{proposition}\label{SimRhomboid}
Let $G$ be an induced subgraph of either a simple rhomboid $SR_n$ or a simple connected rhomboid $SR_n^{con}$ and let\\ $X=\{v_{-1}, v_{-1}', v_{n+1}, v_{n+1}'\}$. Then:

\begin{enumerate}
\item[(1)]
If $G^{spine}=\emptyset$ then $I(G)$ is either contractible or homotopy equivalent to the spheres $S^{-1}, S^0$ or $S^1$.

\item[(2)] If $G\cap X\neq \emptyset$ then $I(G)$ is either contractible or has the homotopy type of a sphere or two spheres. Moreover, if we assume without loss of generality that $v_{-1} \in G$, then: 
\begin{itemize}
    \item[(a)] If either $b_\ell$ or $v_0$ (but not both) is in $G$, then $I(G)$ is contractible or homotopy equivalent to a sphere.
    \item[(b)] If neither $b_\ell$ nor $v_0$ are in $G$, then $I(G)$ is contractible.
\end{itemize}

\item[(3)]
If $G\cap X = \emptyset$ and $G^{spine} \neq \emptyset$, then:
\begin{itemize}
    \item[(a)] If $G$ is an induced subgraph of $SR_n$ and $b_\ell, b_r \in G$, then $$I(G)\sim_h I(G\cap F_n)\sim_h I(G \cap L_n )\vee S^0.$$
    \item[(b)] If $G$ is an induced subgraph of $SR_n^{con}$ and $b_\ell, b_r \in G$, then $$I(G) \sim_h I(G\cap L_n)\vee S^0 \vee S^0.$$
    \item[(c)] If $b_\ell$ or $b_r$ is not in $G$, then $I(G)$ is either contractible or has the homotopy type of either a sphere or the wedge of two spheres.
\end{itemize}
\end{enumerate}

In particular, $I(G)$ can be homotopy equivalent to a wedge of three spheres only in the case {$\operatorname{(3 b)}$}; moreover, such a wedge is of type $S^j\vee S^0\vee S^0$, with $j\geq 0$. Otherwise $I(G)$ is either contractible or homotopy equivalent to either a sphere or the wedge of two spheres.
\end{proposition}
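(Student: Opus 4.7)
The plan is a case analysis matching the three cases of the statement, using the Domination Lemma, the leaf reduction of Corollary \ref{leaf}, Lemma \ref{KjH} on cones, and the fan result of Proposition \ref{Propfan}. In every case the problem reduces to the independence complex of either a fan or a forest, both already under control. For case (1), every cycle of $SR_n$ or $SR_n^{con}$ must pass through a spine vertex, so when $G^{spine}=\emptyset$ the graph $G$ is a forest; by Proposition \ref{Koz}(2) its independence complex is contractible or a sphere, and a direct enumeration over the at most six vertices in $\{b_\ell,b_r\}\cup X$ limits the sphere dimension to $-1$, $0$, or $1$.

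For case (2), we may assume $v_{-1}\in G$; its only neighbours in either rhomboid are $b_\ell$ and the adjacent spine endpoint $v_0$. If neither is in $G$, then $v_{-1}$ is isolated and $I(G)$ is a cone, hence contractible, yielding (2b). If exactly one of $b_\ell,v_0$ lies in $G$, then $v_{-1}$ is a leaf, so Corollary \ref{leaf} gives $I(G)\sim_h \Sigma I(G-st(\text{preleaf}))$; the reduced graph is again an induced subgraph of the same rhomboid with strictly fewer vertices, and induction on $|G|$ delivers (2a). If both $b_\ell$ and $v_0$ are in $G$, then $v_0$ dominates $v_{-1}$ and they are adjacent, so Domination Lemma part (2) reduces $I(G)$ to $I(G-v_0)\vee \Sigma I(G-st(v_0))$; each summand falls into an already-handled subcase and the total stays bounded by a wedge of two spheres.

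For case (3), first suppose both poles lie in $G$. Since $X\cap G=\emptyset$, the poles share the same open neighbourhood $G^{spine}$, so each dominates the other. In $SR_n$ they are non-adjacent, so Domination Lemma part (1) gives $I(G)\sim_h I(G-b_r)$, and $G-b_r$ is the induced subgraph of the fan $F_n$ with apex $b_\ell$; Lemma \ref{KjH} then yields (3a). In $SR_n^{con}$ they are adjacent, so the Domination Lemma contributes the extra summand $\Sigma I(G-st(b_r))$; since $st(b_r)$ sweeps out $b_\ell$ together with every spine vertex of $G$, the star-complement is empty and $\Sigma S^{-1}=S^0$, giving (3b). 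If one of the poles is absent, then $G$ sits inside the fan $cone_1(L_n)$ (the edge between the two poles being irrelevant once an endpoint is removed), so Proposition \ref{Propfan} yields (3c). The ``in particular'' clause follows because $I(G^{spine})$ is the independence complex of a subgraph of a path, hence of a forest, and so by Proposition \ref{Koz}(2) it is either contractible or a single sphere $S^j$ with $j\geq 0$; combining with (3b) gives at most $S^j\vee S^0\vee S^0$, while every other case yields at most a wedge of two spheres.

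The main obstacle I expect is the bookkeeping in case (2) when both $b_\ell$ and $v_0$ are present: one must verify that the two graphs produced by the Domination Lemma remain induced subgraphs of the same rhomboid so that the inductive hypothesis still applies, and that the suspension of the star-complement cannot contribute an extra sphere beyond the allowance of two spheres. All other steps, once the correct vertex to dominate or remove has been chosen, are essentially mechanical applications of the tools listed above.
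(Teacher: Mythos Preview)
Your proposal is correct and follows essentially the same route as the paper. The one place where the paper is slightly more direct is precisely your anticipated ``obstacle'' in case~(2) with both $b_\ell$ and $v_0$ present: the paper uses $b_\ell>v_{-1}$ (your choice $v_0>v_{-1}$ works equally well and in fact yields the same decomposition after one further leaf reduction), and then simply observes that both $G-st(b_\ell)$ and $G-st(v_0)$ are \emph{forests}, hence each contributes at most one sphere by Proposition~\ref{Koz}(2)---no induction is needed and the two-sphere bound is immediate. Similarly, in~(3b) the paper recognizes $G=K_2*G^{spine}$ and applies Lemma~\ref{KjH} in one stroke, which is marginally quicker than your domination-plus-empty-star computation but gives the same formula.
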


\begin{proof} We use Domination Lemma (Lemma \ref{domlemma}) and Corollary \ref{leaf} in the proof. \\
(1) It is clear from Figure \ref{defrhomboids} that $G$ is a forest with at most 5 edges and therefore the result holds. \\ 
(2) If $v_{-1}, b_\ell$ and $v_0$ are in $G$, then $b_\ell$ dominates $v_{-1}$, and $v_{-1}$ becomes a leaf in $G-b_\ell$, so $$I(G) \sim_h \Sigma I(G-st(b_\ell)) \vee \Sigma I(G-st(v_0)),$$ and these two graphs are forests and Proposition \ref{Koz}(2) applies. 
In the particular case (a) (resp. (b)) the vertex $v_{-1}$ is a leaf (resp. isolated) and the statement holds.  \\
(3) Case (a) follows directly from domination of $b_\ell$ over $b_r$. Case (b) follows from Lemma \ref{KjH} and the fact that $G=K_2*G^{spine}$. In case (c) $G$ is an induced subgraph of a fan and Proposition \ref{Propfan} completes the proof. 
\end{proof}

Next, we determine the homotopy type of the independence complex of the simple augmented rhomboid graph $SR_n^{aug}$ and those of their induced subgraphs. 

The case when the induced subgraph $G$ does not contain all $b_2, b_\ell$ and $b_r$ is simpler, since it can be easily reduced to an induced subgraph of $SR_n$ or $SR_n^{con}$. In particular, if $G=SR_n^{aug}-b_2$, applying twice Domination Lemma ($v_{-1}>c_1$ and $v_{n+1} > d_3$) and twice Csorba reduction (Proposition \ref{genCsorba}), we get $SR_n^{con}$. Analogous results hold for the cases $SR_n^{aug}-b_\ell$ and $SR_n^{aug}-b_r$.

We will show that, up to some minor conditions, if the induced subgraph $G$ contains $b_2, b_\ell$ and $b_r$ then $I(G)$ can be obtained by recursive application of Proposition \ref{Prop3.3}. More precisely, we will check the conditions when {\small{$$I(G)\sim_h  I(G-b_2-b_{\ell}-b_r) \vee \Sigma I(G-b_2-b_{\ell}- st(b_r)) \vee \Sigma I(G-b_2- st(b_{\ell})) \vee \Sigma I(G-st(b_2)),$$}}where each term can be computed easily, leading to either a sphere or contractible homotopy type. In particular, we get that $I(G)$ has the homotopy type of four (resp. three) spheres if all four (resp. three) terms are not contractible.  

\begin{proposition}\label{propaugmented}
Let $G$ be an induced subgraph of $SR_n^{aug}$ containing vertices $b_2, b_{\ell}$ and $b_r$, and let $G^{spine}\neq \emptyset$. Then:
\begin{enumerate}
\item[(1)] Either $I(G)$ is contractible or $$ I(G) \sim_h  I(G-b_2)\vee \Sigma I(G-st(b_2)).$$
\item[(2)] Either $I(G)-b_2$ is contractible or $$ I(G-b_2) \sim_h  I(G-b_2-b_{\ell})\vee \Sigma I(G-b_2-st(b_{\ell})).$$
\item[(3)] Either $I(G-b_2-b_{\ell})$ is contractible or
$$ I(G-b_2-b_{\ell}) \sim_h  I(G-b_2-b_{\ell}-b_r) \vee \Sigma I(G-b_2-b_{\ell}-st(b_r)).$$
\end{enumerate}
\end{proposition}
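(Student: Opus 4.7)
The plan is to deduce all three statements from Proposition \ref{Prop3.3}. For any vertex $v \in \{b_2, b_\ell, b_r\}$ and any induced subgraph $H$ of $SR_n^{aug}$ containing $v$, the identity
$$I(H) \sim_h I(H-v) \vee \Sigma I(H - \mathrm{st}(v))$$
holds provided that $I(H-\mathrm{st}(v))$ is nullhomotopic when included in $I(H-v)$. So in each of (1)--(3) it suffices to produce a contractibility witness in the relevant ambient complex, or else to detect one of the degenerate configurations in which the whole of $I(G)$ (respectively $I(G-b_2)$, $I(G-b_2-b_\ell)$) is itself contractible.

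For part (1), I would set $v = b_2$ and read off $\mathrm{st}_G(b_2)$ directly from Definition \ref{defallrhomboids}(3). The strategy is to locate, inside $G - b_2$, a vertex $u$ whose $G$-neighbourhood is contained in $\{b_2\} \cup \mathrm{st}_G(b_2)$; such a $u$ is nonadjacent to every vertex of $G - \mathrm{st}(b_2)$ and therefore becomes a cone point of $I(G - \mathrm{st}(b_2))$ inside $I(G-b_2)$, forcing the contractibility required by Proposition \ref{Prop3.3}. The natural candidates are the augmenting vertices $c_i$ and $d_i$, whose $SR_n^{aug}$-neighborhoods are small and cluster around $b_2, b_\ell, b_r$. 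The only obstruction is that $G$ omits all candidates simultaneously; in that regime I would verify that $I(G)$ is already contractible by applying Domination Lemma to a spine endpoint or to one of $b_\ell, b_r$.

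For parts (2) and (3), the same mechanism is applied successively: take $H = G - b_2$ with $v = b_\ell$, then $H = G - b_2 - b_\ell$ with $v = b_r$. Each removal simplifies the graph — several of the $c_i, d_i$ become leaves or dominated once $b_2$ is gone, and still more simplify when $b_\ell$ is also removed — so the cone-point witness becomes easier to exhibit at each stage. As noted in the paragraph preceding the proposition, any failure mode in which one of $b_2, b_\ell, b_r$ is effectively absent reduces the graph by Domination Lemma and generalized Csorba (Proposition \ref{genCsorba}) to an induced subgraph of $SR_n$ or $SR_n^{con}$ already treated in Proposition \ref{SimRhomboid}, which pins down precisely the degenerate clauses in (1)--(3).

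The main obstacle I anticipate is bookkeeping: a careful case analysis driven by which peripheral vertices (the augmenting $c_i, d_i$ and the extended-spine vertices $v_{-1}, v_{-1}', v_{n+1}, v_{n+1}'$) are present in $G$, and by where $G^{\mathrm{spine}}$ meets the spine endpoints $v_1$ and $v_n$. The connections of $b_\ell$ and $b_r$ to the spine run through paths of length $2$ via $v_0$ and $v_{n+1}$, so $\mathrm{st}_G(b_\ell)$ and $\mathrm{st}_G(b_r)$ can overlap with $G^{\mathrm{spine}}$ in several ways, and each overlap pattern requires a separate verification that the cone-point argument still goes through (or that the ambient complex is contractible). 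This combinatorial enumeration, rather than any single hard topological step, is where most of the work lies.
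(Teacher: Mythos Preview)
Your overall strategy---iterate Proposition \ref{Prop3.3} with $v$ running over $b_2, b_\ell, b_r$---matches the paper exactly. The divergence is in how you propose to verify the hypothesis that $I(H-\mathrm{st}(v))$ is nullhomotopic in $I(H-v)$, and here your cone-point mechanism breaks down for parts (1) and (2).

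The point is that in $SR_n^{\mathrm{aug}}$ the vertices $b_\ell, b_r, b_2$ are \emph{not} pairwise adjacent (they only become a $K_3$ after the Csorba contractions in Example \ref{Example 3.17}). Consequently $b_\ell, b_r \in G - \mathrm{st}(b_2)$, and every vertex of $\mathrm{st}(b_2)\setminus\{b_2\} = \{v_0,\dots,v_n,c_3,d_1\}$ is adjacent to $b_\ell$ or $b_r$: the spine vertices to both, $c_3$ to $b_r$, and $d_1$ to $b_\ell$. So no vertex of $G-b_2$ has its entire neighbourhood inside $\mathrm{st}(b_2)$, and there is no cone point. Your fallback---that $I(G)$ is contractible whenever the cone-point search fails---is false already for $G = SR_n^{\mathrm{aug}}$ itself, where Example \ref{Example 3.17} gives a wedge of three or four $2$-spheres. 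The same obstruction (now with $b_r$ surviving) kills the cone-point approach in part (2). Only in part (3), where all three of $b_2, b_\ell, b_r$ have been removed, does a spine vertex genuinely serve as a cone point.

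The paper's route is different: it computes the homotopy types of all four pieces explicitly in Lemma \ref{lemaaugmented}, and the nullhomotopy is then a connectivity comparison. One first establishes (3) directly (your cone-point argument does work there), then uses (3) together with parts (3) and (4) of Lemma \ref{lemaaugmented} to see that whenever $I(G-b_2-\mathrm{st}(b_\ell))\sim_h S^k$, the conditions forcing this also force $I(G-b_2-b_\ell)$ to be $k$-connected, so the inclusion is null; this gives (2). Part (1) follows the same way from (2). The interlocking of the case conditions in Lemma \ref{lemaaugmented} is what makes this work, and it is genuinely more than bookkeeping: one has to check, for instance, that the configurations producing $I(G-b_2-\mathrm{st}(b_\ell))\sim_h S^1$ are incompatible with those producing an $S^1$ summand in $I(G-b_2-b_\ell)$.
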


\begin{proof} 
The statement follows from iterative application of Proposition \ref{Prop3.3}. In each step one needs to check that $I(G-st(v))$ is contractible in $I(G-v)$, for $v\in\{b_2, b_\ell, b_r\}$. These conditions can be deduced from Lemma \ref{lemaaugmented}.
\end{proof}

\begin{remark}
The case when $G$ is an induced subgraph of $SR_n^{aug}$ with $G^{spine}= \emptyset$ was discussed in Example \ref{exampletheta}, since after possible application of Domination Lemma ($v_{-1}>c_1$ and $v_{n+1}>d_3$), $G$ becomes an induced subgraph of $\theta_{4,2,4}$.
\end{remark}

\begin{lemma}\label{lemaaugmented}
Let $G$ be an induced subgraph of $SR^{aug}_n$ with $G^{spine}\neq \emptyset$ and $b_{\ell},b_r,b_2 \in G$. Then the following conditions hold: 
\begin{enumerate}
\item[(1)] $$I(G-st(b_{2}) )\sim_h
\left\{
\begin{array}{ll}
S^1 & \mbox{if ($c_1$ or $v_{-1}$) and  ($d_3$ or  $v_{n+1}$) are in $G$},\\
\ast   & \mbox{otherwise}.
\end{array}
\right.
$$

\item[(2)] $$I(G-b_2-st(b_{\ell}))\sim_h
\left\{
\begin{array}{ll}
S^1 & \mbox{if $c_2,c_3,d_2$ and ($d_3$ or $v_{n+1}$) are in $G$},\\
S^0 & \mbox{if (($c_2,c_3\in G$) and ($d_2,d_3,v_{n+1}\notin G$)) or} \\
        & \mbox{ (($c_2,c_3 \notin G$) and ($d_2$ and ($d_3$ or $ v_{n+1}$) are in $G$)) or }\\
& \mbox{ (($c_2,d_2 \notin G$) and ($c_3$ or $d_3$ or $v_{n+1}$ is in $G$))},\\
\ast   & \mbox{otherwise}.
\end{array}
\right.
$$

\item[(3)]
$$I(G-b_2-b_{\ell}-st(b_r))\sim_h
\left\{
\begin{array}{ll}
S^1 & \mbox{if $d_1,d_2,c_2$ and ($c_1$ or $v_{-1}$) are in $G$},\\
S^0 & \mbox{if (($d_1,d_2 \in G)$ and ($c_1,c_2,v_{-1} \notin G$))  or} \\
& \mbox{(($d_1,d_2 \notin G$) and ($c_2$ and ($c_1$ or $v_{-1}$) are in $G$))},\\
        S^{-1} & \mbox{if  $G-b_2-b_{\ell}-st(b_r)=\emptyset$},\\
\ast   & \mbox{otherwise}.
\end{array}
\right.
$$

\item[(4)] Assume that $I(G-~b_2-~b_{\ell})$ is not contractible. Then, $I(G-~b_2-~b_\ell~-~b_r)$ is either contractible or homotopy equivalent to a sphere $S^j$ for some $j$. In particular, for the latter case we get:  $$I(G-b_2-b_{\ell}-b_r)\sim_h
\left\{
\begin{array}{ll}
S^j, \, j>1 & \mbox{if ($c_2$ and ($c_1$ or $c_3$ or $v_{-1}$)) and } \\
& \mbox{($d_2$ and ($d_1$ or $d_3$ or $v_{n+1}$)) are in $G$},\\
S^j, \, j>0 & \mbox{if (($c_2$ and ($c_1$ or $c_3$ or $v_{-1}$) are in $G$) or } \\
& \mbox{($d_2$ and ($d_1$ or $d_3$ or $v_{n+1}$) are in $G$)},\\
S^j, \, j\geq 0  & \mbox{otherwise}.
\end{array}
\right.$$
\end{enumerate}
\end{lemma}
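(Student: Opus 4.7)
The plan is to treat each of the four parts by describing explicitly the graph obtained after the corresponding star or vertex removal, and then reducing it to a known configuration (forest, path, cycle, or a subgraph of $\theta_{4,2,4}$) by iterated use of the Domination Lemma (Lemma~\ref{domlemma}), leaf removal (Corollary~\ref{leaf}), and the Generalized Csorba move (Proposition~\ref{genCsorba}). Once the reduced graph is a forest or a union of paths, the homotopy type is read off from Proposition~\ref{Koz}(1)--(2). The conditions appearing in the statement encode, case by case, whether the relevant tails and the spine of $SR_n^{aug}$ survive the removal in such a way that the reduced graph contains enough vertices to realize a non-contractible sphere.

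First I would handle Part~(1). The star of $b_2$ disconnects the graph into the two lateral branches meeting $b_\ell$ and $b_r$ together with whatever remains of the spine; since $b_\ell$ and $b_r$ are still present they dominate any attached spine vertex, so after iterated domination and leaf removal only the short paths passing through $\{c_1,v_{-1}\}$ on one side and $\{d_3,v_{n+1}\}$ on the other survive. A direct enumeration of the $2\times 2$ combinations shows that $I(G-st(b_2))$ is a sphere $S^1$ precisely when both sides are non-trivial and is contractible in the remaining cases. Part~(2) is analogous: removing $st(b_\ell)$ leaves $b_r$, $b_2$ and the right-hand branch together with the fragments $c_2,c_3,d_2$ on the left; I would first remove $b_r$ and $b_2$ by domination (they dominate the surviving extended-spine vertices), which leaves a forest built from $c_2,c_3,d_2$ on one side and the truncated right branch on the other, and the listed cases (producing $S^1$, $S^0$ or a point) correspond directly to the possible shapes of this forest. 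Part~(3) is the mirror image of Part~(2), with the extra degenerate case $G-b_2-b_\ell-st(b_r)=\emptyset$ yielding $I(\emptyset)\sim_h S^{-1}$ by convention.

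Part~(4) is the most delicate one, because no hub vertex remains. Here $G-b_2-b_\ell-b_r$ is an induced subgraph of the extended spine together with the paths $c_1\!-\!c_2\!-\!c_3$ and $d_1\!-\!d_2\!-\!d_3$ attached to it; equivalently, it is a subgraph of a fan with subdivided edges, so by Proposition~\ref{Propfan} (applied after a Csorba contraction if needed) $I(G-b_2-b_\ell-b_r)$ is either contractible or a wedge of at most two spheres. To pin down that only one sphere appears under the hypothesis that $I(G-b_2-b_\ell)$ is not contractible, I would use the fact (from Part~(2)) that non-contractibility of $I(G-b_2-b_\ell)$ forces the $d$-side and a tail on the spine to be already present in a configuration where the $c$-side can be reduced by domination/leaf removal to a single path. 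The degree $j$ is then just the length of the resulting path, split into cases exactly according to which of the two augmented branches ($c_2$ with one neighbour, $d_2$ with one neighbour, both, or neither) are present; this gives the three listed subcases.

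The main obstacle is the bookkeeping in Part~(4), where one must keep track of which of $c_1,c_2,c_3,v_{-1}$ (and mirror-symmetrically $d_1,d_2,d_3,v_{n+1}$) are in $G$ in order to determine the exact dimension $j$ of the surviving sphere; the key reduction is that a neighbour of $c_2$ (respectively $d_2$) among $\{c_1,c_3,v_{-1}\}$ (respectively $\{d_1,d_3,v_{n+1}\}$) allows a Csorba-type extension that bumps the dimension up, while the absence of such a neighbour lets a domination collapse shorten the path. All remaining calculations in Parts~(1)--(3) are routine forest or path computations of the type already used in the proofs of Propositions~\ref{SimRhomboid} and~\ref{propaugmented}.
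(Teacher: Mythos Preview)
Your overall strategy---reduce each of the four removed-star graphs to a small forest or path and read off the homotopy type---is exactly the paper's approach, which simply says ``Statements (1), (2) and (3) follow from the analysis of Figure~\ref{inducedaugmented}'' and for (4) ``analyze all different possibilities for $G^{spine}$ and apply Lemma~\ref{Lemma123}.'' So the route is the same; what needs fixing is your description of the intermediate graphs.

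In Part~(1) you say $b_\ell,b_r$ ``dominate any attached spine vertex'' after removing $st(b_2)$, but every spoke vertex $v_0,\dots,v_n$ is adjacent to $b_2$, so none survive; the remaining graph is just the disjoint union of two squares (or fragments thereof) on $\{b_\ell,c_1,c_2,v_{-1}\}$ and $\{b_r,d_3,d_2,v_{n+1}\}$, and the case split is immediate. In Part~(2) you propose to ``remove $b_r$ and $b_2$ by domination,'' but $b_2$ is already gone, and $b_r$ does not dominate anything in $G-b_2-st(b_\ell)$; what actually remains is the forest on $\{c_2,c_3\}\cup\{b_r,d_1,d_2,d_3,v_{n+1}\}$, which one handles by leaf removal (Corollary~\ref{leaf}) directly.

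The real gap is in Part~(4). The graph $G-b_2-b_\ell-b_r$ is not ``a subgraph of a fan with subdivided edges''; once all three hubs are deleted it is simply a \emph{forest}: the extended spine $v_{-1}\!-\!v_0\!-\!\cdots\!-\!v_{n+1}$ with the tripods $c_1,c_3$ attached at $c_2$ (which hangs from $v_{-1}$) and $d_1,d_3$ attached at $d_2$ (which hangs from $v_{n+1}$). Hence Proposition~\ref{Koz}(2) gives ``contractible or a single sphere'' outright, with no need for Proposition~\ref{Propfan} and no need to invoke the hypothesis that $I(G-b_2-b_\ell)$ is non-contractible to rule out a second sphere. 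That hypothesis is used only to control which configurations of $G^{spine}$ can occur when bounding the dimension $j$; the paper obtains the three dimension ranges by applying Lemma~\ref{Lemma123} (comparing $I(L_n)$ with $I(L_n-st(v))$ and $I(L_n-A)$), not by the Csorba-type heuristic you describe.
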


\begin{figure}
    \centering
\includegraphics[width = 11cm]{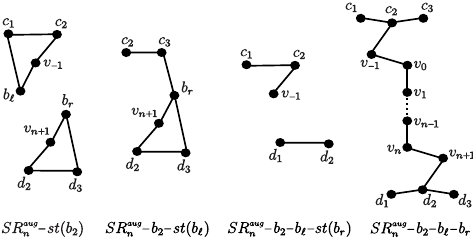}
\caption{\small{Graphs illustrating proof of Lemma \ref{lemaaugmented} in the case when $G=SR_n^{aug}$.}}\label{inducedaugmented}
\end{figure}

\begin{proof}
In Figure \ref{inducedaugmented} we illustrate the graphs $SR_n^{aug}-st(b_2)$, $SR_n^{aug}-b_2-st(b_\ell)$, $SR_n^{aug}-b_2-b_\ell-st(b_r)$, and $SR_n^{aug}-b_2-b_\ell-b_r$. Statements (1), (2) and (3) follow from the analysis of Figure \ref{inducedaugmented}. To get statement (4) we analyze all different possibilities for $G^{spine}$ and apply Lemma~\ref{Lemma123}.
\end{proof}

Proposition \ref{propaugmented} and careful analysis of Lemma \ref{lemaaugmented} allow us to fully characterize the induced subgraphs of $SR^{aug}_n$ whose independence simplicial complexes are homotopy equivalent to a wedge of three or four spheres. 

\begin{corollary}\label{corollarydimensions}
Let $G$ be an induced subgraph of $SR_n^{aug}$ with $G^{spine} \neq \emptyset$. and let $A^+,A^-,B^+,B^-$ describe the following conditions on $G$:\\
$A^+$ means that $c_2,c_3$ and ($c_1$ or $v_{-1}$) are in $G$.\\
$A^-$ means then none of $c_2,c_3,c_1,v_{-1}$ are in $G$.\\
$B^+$ means that $d_1,d_2$ and ($d_3$ or $v_{n+1}$) are in $G$.\\
$B^-$ means then none of $d_1,d_2,d_3,v_{n+1}$ are in $G$.
\begin{enumerate}
\item[(1)] $I(G)$ is a wedge of four spheres if and only if  $b_2, b_{\ell},b_r$ are in $G$, conditions $A^+$ and $B^+$ hold,
and $I(G^{spine})$ is not contractible (that is $I(G^{spine}) \sim_h S^j$ for some $j$). Furthermore, we get $$I(G) \sim_h S^j \vee S^2 \vee S^2 \vee S^2,$$ where $j\geq2$.
\item[(2)] $I(G)$ is a wedge of three spheres if and only if  one of the following conditions holds: \\
(i) $b_{\ell}, b_r$ are in $G$, $b_2 \notin G$, conditions $A^+$ and $B^+$ hold and $I(G^{spine})$ is not contractible.\\
(ii) $b_r$ and ($b_{\ell}$ or $b_2$) are in $G$, conditions $A^-$ and $B^+$ hold and $I(G^{spine})$ is not contractible.\\
(iii) $b_{\ell}$ and ($b_r$ or $b_2$) are in $G$, conditions $A^+$ and $B^-$ hold and $I(G^{spine})$ is not contractible.\\
(iv) $b_2, b_{\ell},b_r$ are in $G$, conditions $A^+$ and $B^+$ hold and $I(G^{spine})$ is contractible.
\end{enumerate}
Furthermore, we get $I(G) \sim_h S^j \vee S^2 \vee S^2$, where $j\geq2$.
\end{corollary}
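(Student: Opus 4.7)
The plan is to iterate Proposition~\ref{propaugmented}, splitting $I(G)$ along the hub vertices $b_2, b_\ell, b_r$ (in this order) to obtain a wedge decomposition with up to four summands, and then to identify each summand via Lemma~\ref{lemaaugmented}. When all three hub vertices lie in $G$, the full four-summand formula
\[
I(G) \sim_h I(G - b_2 - b_\ell - b_r) \vee \Sigma I(G - b_2 - b_\ell - st(b_r)) \vee \Sigma I(G - b_2 - st(b_\ell)) \vee \Sigma I(G - st(b_2))
\]
holds; otherwise one uses a truncated version of the decomposition applied to the subset of hub vertices present in $G$. After that, the conditions $A^\pm$ and $B^\pm$ are precisely the combinatorial hypotheses controlling which summands are non-contractible.

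Under the assumption $b_2, b_\ell, b_r \in G$, parts (1)--(3) of Lemma~\ref{lemaaugmented} show that each of the three suspended summands is either contractible or $S^2$, and they are simultaneously all $S^2$ exactly when conditions $A^+$ and $B^+$ are in force (the summand $\Sigma I(G-st(b_2))$ is automatically $S^2$ in that case, since $A^+$ and $B^+$ imply both $(c_1$ or $v_{-1}) \in G$ and $(d_3$ or $v_{n+1}) \in G$). By Lemma~\ref{lemaaugmented}(4), the remaining summand $I(G-b_2-b_\ell-b_r)$ is then either contractible or a sphere $S^j$ with $j \geq 2$, with non-contractibility equivalent to non-contractibility of $I(G^{spine})$. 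This immediately yields part (1) and case (iv) of part (2).

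Cases (i)--(iii) of part (2) correspond to at least one of $b_2, b_\ell, b_r$ being absent from $G$. In (i), removing $b_2$ from the list of hub vertices drops the summand $\Sigma I(G-st(b_2))$, and the remaining three are non-contractible spheres exactly under $A^+, B^+$ and non-contractibility of $I(G^{spine})$. In (ii), the hypothesis $A^-$ forces $\Sigma I(G-b_2-st(b_\ell))$ and (when relevant) $\Sigma I(G-st(b_2))$ to be contractible by Lemma~\ref{lemaaugmented}(1)--(2); a direct check of the remaining sub-cases of Lemma~\ref{lemaaugmented} shows that the three surviving terms become non-contractible spheres. Case (iii) is the mirror image of (ii) under the left-right reflection of $SR_n^{aug}$ that swaps $b_\ell \leftrightarrow b_r$, $c_i \leftrightarrow d_{4-i}$ and $A^\pm \leftrightarrow B^\pm$, and therefore follows at once. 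In each case, the dimension bound $j \geq 2$ on the first summand comes directly from Lemma~\ref{lemaaugmented}(4).

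The main obstacle is the \emph{exhaustive bookkeeping} required for the ``only if'' direction: one must verify that no configuration outside the list in (1) and (2)(i)--(iv) produces a wedge of three or four spheres. This amounts to running through the complementary clauses of Lemma~\ref{lemaaugmented} and showing that whenever $A^+$ or $B^+$ fails in a way not covered by the stated subcases, at least two of the four (or three) summands are contractible, so that $I(G)$ reduces to a wedge of at most two spheres consistent with Propositions~\ref{SimRhomboid} and~\ref{Propfan}. The delicate part is tracking how partial failure of $A^+$ or $B^+$ can still leave a summand as $S^0$ rather than contractible via the intermediate clauses of Lemma~\ref{lemaaugmented}(2)--(3), so that one does not spuriously undercount.
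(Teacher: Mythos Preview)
Your plan matches the paper's: the corollary is stated there as a consequence of Proposition~\ref{propaugmented} together with ``careful analysis of Lemma~\ref{lemaaugmented}'', and the inequality $j\geq 2$ is attributed separately to Lemma~\ref{Lemma123} (your appeal to Lemma~\ref{lemaaugmented}(4) amounts to the same thing, since that part is itself proved via Lemma~\ref{Lemma123}).

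There is, however, a concrete slip in your handling of case~(ii). You assert that $A^-$ forces $I(G-b_2-st(b_\ell))$ to be contractible, but Lemma~\ref{lemaaugmented}(2) says otherwise: under $A^-$ and $B^+$ one has $c_2,c_3\notin G$ together with $d_2\in G$ and ($d_3$ or $v_{n+1}$) in $G$, which is exactly the second $S^0$ clause there, so this term is $S^0$ and its suspension is $S^1$, not a point. Only $\Sigma I(G-st(b_2))$ genuinely collapses under $A^-$ (by part~(1) of the lemma). The three-sphere count survives, but your identification of \emph{which} summand vanishes is wrong, and with it your route to the dimension claim in that sub-case; the correct comparison of dimensions runs through Lemma~\ref{Lemma123}, as the paper remarks immediately after the statement. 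A smaller point: when a hub vertex is absent the paper does not argue via a ``truncated'' Proposition~\ref{propaugmented} but instead reduces (via Domination and Csorba moves, see the paragraph preceding Proposition~\ref{propaugmented}) to an induced subgraph of $SR_n$ or $SR_n^{con}$ and then invokes Proposition~\ref{SimRhomboid}. Your truncation idea can be made to work, but you would have to re-verify the contractibility hypothesis of Proposition~\ref{Prop3.3} separately in each such configuration.
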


The property that one of the spheres in Corollary \ref{corollarydimensions} has equal or higher dimension than the others (i.e., $j\geq 2$) can be proved using Lemma \ref{Lemma123}.

We present now two crucial examples:

\begin{example}\label{Example 3.17} For the simple augmented rhomboid graph $SR^{aug}_n$ (see Figure \ref{defrhomboids}(b)) we get 
$$I(SR_n^{aug}) \sim_h \left\{
\begin{array}{ll}
S^2 \vee S^2 \vee S^2 & \mbox{if $n=3k$},\\
S^{k+2} \vee S^2 \vee S^2 \vee S^2& \mbox{if $n=3k+1$ or $n=3k+2$}.
\end{array}
\right.
$$
To obtain the formula above, observe first that since $v_{-1} > c_1$ and $v_{n+1}>d_3$ Domination Lemma applies and $I(SR_n^{aug})\sim_h I(SR_n^{aug}-v_{-1}-v_{n+1})$. Then applying Csorba reduction (Proposition \ref{genCsorba}) over the paths $b_\ell-c_1-c_2-c_3$ and $d_1-d_2-d_3-b_r$ we get $I(SR_n^{aug}-v_{-1}-v_{n+1})\sim_h \Sigma^2 I(K_3*L_n)$ and Lemma \ref{KjH}(2) and Proposition \ref{Koz}(1) completes the proof.
\end{example} 

\begin{example}\label{Example 3.18} Let $G=SR^{aug}_n-c_1-d_3$.
Then the homotopy type of $I(G)$ is the same as in the previous example, that is 
$$I(SR^{aug}_n-c_1-d_3)\sim_h I(SR_n^{aug}).
$$
To get the above result, observe that $I(G-st(b_2)) \sim_h I(L_2 \sqcup L_2) \sim_h S^1$ and after applying twice Csorba reduction we get $I(G-b_2) \sim_h \Sigma^2 (K_2*L_n)$, which has the homotopy type of either $S^2 \vee S^2$ (if $n=3k$) or $S^{k+2} \vee S^2 \vee S^2$ (if $n=3k+1, 3k+2$). Therefore, Proposition \ref{Prop3.3} applies and we get 
$$I(G) \sim_h I(G-b_2) \vee \Sigma I(G-st(b_2)).$$
\end{example}

In the topological part of this paper (Sections \ref{secpositive} and \ref{sectnegative}) we need to consider induced subgraphs of more general augmented rhomboids (see Definition~\ref{nosimpledef}); essentially we allow subdivision of edges $\overline{v_iv_{i+1}}$, $-1\leq i \leq n$. We also consider certain modifications of simple rhomboid graphs. 

\begin{definition}\label{modaugmented}
We define the family of mod-augmented simple rhomboid graphs as those graphs obtained from an augmented simple rhomboid graph $SR_n^{aug}$ after (possibly) a combination of the following transformations:
\begin{enumerate}
    \item[(a)] contract edge connecting $v_0$ and $v_{-1}$,
    \item[(b)] contract edge connecting $v_n$ and $v_{n+1}$,
    \item[(c)] delete edge connecting $v_0$ and $v_{-1}$,
    \item[(d)] delete edge connecting $v_n$ and $v_{n+1}$.
\end{enumerate}
\end{definition}

We can analyze independence complexes of induced subgraphs $G$ of mod-augmented simple rhomboid graphs in the same way we did with induced subgraphs of $SR^{aug}_n$. In fact, in most cases there is no need to repeat the whole proof, as those induced graphs obtained from modifications of $SR_n^{aug}$ can be reduced to induced subgraphs of $SR^{aug}_k$ with $k\leq n$, as illustrated in the following example. 

\begin{figure}
\includegraphics[width = 10.5cm]{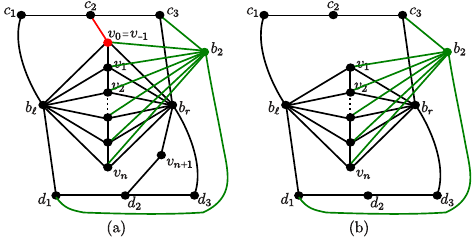}
\caption{\small{Modified augmented simple graph $G$ illustrating Example \ref{exammodifiedaug} and its reduction $SR_k^{aug}-v_{-1}-v_{k+1}$, with $k=n-1$.}}
\label{figmodifiedaug}
\end{figure}

\begin{example}\label{exammodifiedaug}
Let $G$ be the graph obtained from $SR_n^{aug}$ after collapsing edge connecting $v_0$ and $v_{-1}$ and removing edge connecting $v_n$ and $v_{n+1}$, as shown in Figure \ref{figmodifiedaug}(a). It is clear that $v_0 > c_1$ and $v_{n+1}> d_3$ in $G$, and therefore by Domination Lemma $I(G) \sim_h I(G-v_0-v_{n+1})$. The graph $G-v_0-v_{n+1}$ is the graph $SR_k^{aug}-v_{-1}-v_{k+1}$ where $k=n-1$, as shown in Figure \ref{figmodifiedaug}(b).
\end{example}

Next result summarizes the computations described before. 

\begin{corollary}\label{cormodaugmented4spheres}
Let $G$ be an induced subgraph of a (mod-)augmented simple rhomboid graph. Then, there exists a polynomial time algorithm which determines the homotopy type of $I(G)$. Moreover, if not contractible, $I(G)$ is homotopy equivalent to either $S^i$ or $S^k\vee S^i$ or $S^k\vee S^i \vee S^i$ or $S^k\vee S^i\vee S^i \vee S^i$, where $k\geq i$. 
\end{corollary}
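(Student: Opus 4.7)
The plan is to reduce the mod-augmented case to the augmented case and then run through the complete case analysis already prepared in Propositions \ref{propaugmented}, Lemma \ref{lemaaugmented}, and Corollary \ref{corollarydimensions}, using the $\theta$-graph computation (Example \ref{exampletheta}) and the simple/connected rhomboid analysis (Proposition \ref{SimRhomboid}) as fallbacks.

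\textbf{Step 1: from mod-augmented to augmented.} First I would remove the four optional modifications (a)-(d) from Definition \ref{modaugmented}. For each contraction of $\overline{v_0v_{-1}}$ (resp.\ $\overline{v_nv_{n+1}}$) the resulting vertex $v_0$ (resp.\ $v_{n+1}$) dominates $c_1$ (resp.\ $d_3$); for each deletion, the endpoint of the deleted edge becomes a pendant vertex or an isolated vertex that can be handled directly. In every case, Domination Lemma (Lemma \ref{domlemma}) together with Corollary \ref{leaf} reduces $G$, in at most four steps, to an induced subgraph of an augmented simple rhomboid graph $SR_{k}^{aug}$ for some $k\leq n$, exactly as in Example \ref{exammodifiedaug}. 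This reduction is linear in the number of vertices of $G$.

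\textbf{Step 2: case analysis on $G \subset SR_n^{aug}$.} I would split into the natural cases according to which of $b_2, b_\ell, b_r$ appear in $G$ and whether $G^{spine}$ is empty:
\begin{itemize}
\item If $G^{spine}=\emptyset$, apply Domination Lemma ($v_{-1}>c_1$, $v_{n+1}>d_3$) to embed $G$ inside $\theta_{4,2,4}$ and invoke Example \ref{exampletheta}; $I(G)$ is contractible, a sphere, or a wedge of two spheres (at most one per arc, automatically with the higher-dimensional one first).
\item If $G^{spine} \ne \emptyset$ but $G$ omits at least one of $b_2, b_\ell, b_r$, then the reduction described just before Proposition \ref{propaugmented} (two dominations and two Csorba moves) brings $G$ to an induced subgraph of $SR_k$ or $SR_k^{con}$, and Proposition \ref{SimRhomboid} immediately yields the desired form.
\item If $G^{spine}\ne \emptyset$ and $b_2,b_\ell,b_r\in G$, apply Proposition \ref{propaugmented} iteratively: depending on which of $I(G-st(b_2))$, $I(G-b_2-st(b_\ell))$, $I(G-b_2-b_\ell-st(b_r))$ are contractible (criteria from Lemma \ref{lemaaugmented}), we fall through to the analogous ``missing vertex'' case. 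Each non-contractible suspended term is a sphere of dimension $\leq 2$ by parts (1)-(3) of Lemma \ref{lemaaugmented}, and the remaining base term $I(G-b_2-b_\ell-b_r)$ is either contractible or a single sphere of dimension $\geq 2$ when any of the suspended summands appears, by Lemma \ref{lemaaugmented}(4). Summing gives $I(G)\sim_h S^k\vee S^i\vee\cdots\vee S^i$ with at most three equal copies of $S^i$ and $k\geq i=2$, matching Corollary \ref{corollarydimensions}.
\end{itemize}

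\textbf{Step 3: inequality $k\geq i$ and polynomial time.} The dimension inequality $k\geq i$ across all cases is the only item requiring a bit of vigilance: in the final case it comes from the bound $j\geq 2$ in Corollary \ref{corollarydimensions}(1)-(2); in the missing-vertex and $G^{spine}=\emptyset$ cases it comes from Lemma \ref{Lemma123}(2), which guarantees that the spine sphere is always at least as large as the extra $S^0$'s produced by joining with complete graphs (Lemma \ref{KjH}). For polynomial time, note that the whole procedure consists of: (i) a bounded number of Domination/Csorba reductions to pass from mod-augmented to $SR_n^{aug}$; (ii) a finite, $n$-independent case check against Lemma \ref{lemaaugmented} to decide which summands survive; and (iii) three applications of Proposition \ref{Koz} to evaluate path/cycle independence complexes determined by $G^{spine}$. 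Each operation is done in time polynomial (in fact linear) in the number of vertices of $G$, giving the desired polynomial-time algorithm.

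\textbf{Main obstacle.} The routine part is the algorithmic bookkeeping; the real subtlety is verifying the dimension ordering $k\geq i$ uniformly across the many sub-cases of Lemma \ref{lemaaugmented}(4), because a priori nothing forces the ``base'' sphere coming from $G-b_2-b_\ell-b_r$ to dominate the three suspended $S^2$'s. I expect to handle this by systematically using Lemma \ref{Lemma123}(2) applied to the spine $L_n$ versus $L_n$ with the at-most-four-vertex subset $\{c_2,d_2,c_3\text{ or }v_{-1},d_1\text{ or }v_{n+1}\}$ removed, which is precisely the configuration forcing the suspensions in Lemma \ref{lemaaugmented}(2)-(3) to contribute; this yields $\dim I(G^{spine}) \geq \dim I(L_n - A) + 2 \geq 2$, closing the argument.
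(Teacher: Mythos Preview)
Your plan is correct and mirrors the paper's own (very terse) proof, which simply points back to the case analysis in Proposition~\ref{SimRhomboid}, Proposition~\ref{propaugmented}, Lemma~\ref{lemaaugmented}, Corollary~\ref{corollarydimensions}, Example~\ref{exampletheta}, and the mod-augmented reduction of Example~\ref{exammodifiedaug}, together with the observation that each step (domination, leaf removal, Csorba) is a polynomial-time operation. One small wording slip: in Step~1 the endpoint of a deleted extended-spine edge need not literally become a leaf or isolated vertex---rather, as in Example~\ref{exammodifiedaug}, it enters a domination relation with $c_1$ or $d_3$ and is removed that way---but this does not affect the argument.
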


\begin{proof}
Homotopy types follow from earlier results in this section. Moreover, all constructions in this section depended only on tasks performed in polynomial time (often linear or quadratic), for example finding a preleaf in the graph or checking whether one specific vertex dominates other.
\end{proof}

We finish this section introducing a family of graphs which will be obtained as Lando graphs of the closures of certain braids of 4 strands. These graphs are obtained from (augmented) simple rhomboid graphs by allowing subdivisions in the extended spine.

\begin{definition}\label{nosimpledef}
The rhomboid graph $G(c_0',c_0,\ldots,c_{n+1},c_{n+1}')$ is the graph obtained from the simple rhomboid graph $SR_n$ (Definition \ref{defallrhomboids}) by subdividing the edge connecting $v_{i-1}$ to $v_i$ into $c_i$ parts, for $0\leq i \leq n+1$, and the edges connecting $v_{-1}'$ to $v_0$ and $v_n$ to $v_{n+1}'$ into $c_0'$ and $c_{n+1}'$ parts, respectively. See Figure \ref{exrhomboidgraph}. 

If $c_i=0$ for some $i$, then vertices $v_{i-1}$ and $v_i$ are identified (similarly for $c_0'=0$ and $c_{n+1}'=0$), and we remove multiple edges. 

Analogously, we define (mod-)augmented rhomboid graph as those graphs obtained by subdivision along the extended spine of a (mod-)augmented simple rhomboid graph.
\end{definition}

\begin{figure}[h]
\includegraphics[width = 11cm]{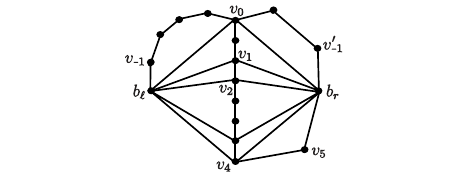}
\caption{\small{Rhomboid graph $G(2,4,2,1,3,1,1,0)$. Since $c_5'=0$, vertices $v_5'$ and $v_4$ were identified.}} 
\label{exrhomboidgraph}
\end{figure}        

\begin{proposition}\label{rhomboid2spheres}
Let $G=G(c_0',c_0,c_1,...,c_{n+1},c_{n+1}')$ be a rhomboid graph. Then the homotopy type of the independence complex of any induced subgraph $H$ of $G$ can be computed in polynomial time, and $I(H)$ is either contractible or homotopy equivalent to a sphere or to the wedge of two spheres. 
\end{proposition}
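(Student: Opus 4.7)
The plan is to reduce $I(H)$ to the independence complex of an induced subgraph of a simple rhomboid $SR_m$, at the cost of a sequence of suspensions, and then invoke Proposition~\ref{SimRhomboid}. Since the rhomboid graph $G$ differs from $SR_n$ only by subdivisions along the extended spine, the two available reduction tools are leaf removal (Corollary~\ref{leaf}) and the generalized Csorba move (Proposition~\ref{genCsorba}); each introduces at most a single suspension per application and strictly decreases the vertex count.

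First I would split into cases based on which of $b_\ell, b_r$ belong to $V(H)$. If at least one of them is missing, then the only potential cycles of $H$ are confined to one of the two end diamonds, and removing the missing side of that diamond forces $H$ to be a forest; Proposition~\ref{Koz}(2) then gives that $I(H)$ is contractible or a single sphere, computable in linear time by iterated leaf removal. Otherwise $b_\ell, b_r \in V(H)$, and I would repeatedly prune leaves using Corollary~\ref{leaf}; any subdivision vertex whose spine-neighbor is absent from $H$ becomes a leaf, or else isolated, in which case the formula $I(G_1 \sqcup G_2) = I(G_1) \ast I(G_2)$ allows it to be handled separately.

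Next, I would apply Proposition~\ref{genCsorba} to any length-$3$ subpath of the extended spine whose two internal vertices currently have degree two, each application collapsing four consecutive vertices to one and recording a suspension. Iteration terminates in polynomial time since the vertex count strictly decreases, and the resulting graph has each surviving subdivided portion of length at most $2$. At this point the graph is, up to isomorphism, an induced subgraph of some simple rhomboid $SR_m$ with $m \leq n$, so Proposition~\ref{SimRhomboid} determines its independence complex as contractible, a sphere, or a wedge of two spheres. Accumulating the suspensions then yields the homotopy type of $I(H)$.

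The main obstacle, and the reason the wedge-of-three-spheres case (3b) in Proposition~\ref{SimRhomboid} never arises, is to verify that the reductions never create an edge between $b_\ell$ and $b_r$; this holds because $G$ contains no such edge and neither Csorba contraction nor leaf removal can introduce one (both either delete vertices or contract already-subdivided paths disjoint from the frame edges incident to $b_\ell$ and $b_r$). Polynomial running time follows since every reduction is a local constant-time check performed on a polynomial-size graph, and the total number of reductions is bounded by the initial number of vertices of $H$.
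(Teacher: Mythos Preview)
Your reduction strategy has the right shape, but there are two genuine gaps.

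First, the claim that if one of $b_\ell,b_r$ is absent then $H$ is a forest is false. Remove $b_\ell$ from $SR_n$: the vertex $b_r$ is still joined to every spoke vertex $v_0,\dots,v_n$, so for each $i$ the triangle $b_r\,v_i\,v_{i+1}$ survives. What you actually get is (an induced subgraph of) a fan, not a forest; the paper covers this situation via Proposition~\ref{Propfan} (and in Proposition~\ref{SimRhomboid}(3c)), not via Proposition~\ref{Koz}(2).

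Second, and more seriously, the assertion that after exhausting Csorba moves ``the resulting graph is, up to isomorphism, an induced subgraph of some simple rhomboid $SR_m$'' fails precisely when a residual spine segment has length~$2$. If $v_{i-1}-u-v_i$ survives with $u$ of degree two, then $u$ is adjacent to neither $b_\ell$ nor $b_r$, whereas in any induced subgraph of $SR_m$ containing $b_\ell,b_r$ every non-corner vertex on the spine is a spoke vertex and hence adjacent to both. No choice of $m$ fixes this. The paper handles exactly this obstruction with an extra Domination Lemma step: when an interior parameter reduces to $2$, the middle vertex is dominated by both $b_\ell$ and $b_r$, so $I(H)\sim_h I(H-b_\ell-b_r)$ and one lands in a forest; when an outer parameter reduces to $2$, a different domination ($v_0>v_{-1}$, etc.) followed by leaf removal drops you into a fan. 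Only after these dominations are the remaining parameters in $\{0,1,\infty\}$ and the identification with an induced subgraph of $SR_m$ becomes legitimate. Your argument needs this missing domination step; without it the reduction does not terminate in a simple rhomboid.
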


\begin{proof}
We introduce the notation $c_i=\infty$ if $v_{i-1}$ is not connected to $v_i$ in $H$, for $0\leq i \leq n+1$ (analogous convention is used for $c_0'$ and $c_{n+1}'$). Also, write $C_I=\{c_1, \ldots, c_n\}$ and $C_J= \{c_0, c_0', c_{n+1}, c_{n+1}'\}$.

We use Domination Lemma (Lemma \ref{domlemma}) and Csorba reduction (Proposition \ref{genCsorba}) in the proof. We also apply Corollary \ref{leaf} if possible, so we can assume that there are no leaves at any step of the process. 

First step is to apply Csorba reduction to $c_i \in C_I$, so we can assume that these parameters are equal to $0, 1, 2$ or $\infty$. We keep notation $H$ for the reduced graph. 

If $c_i=2$ for some $c_i \in C_I$, then $b_\ell$ and $b_r$ dominates the vertex between $v_{i-1}$ and $v_i$, so $I(H) \sim_h I(H-b_\ell-b_r)$ and Proposition \ref{Koz}(2) completes the proof.

Otherwise we can assume that $c_i \in C_I$ equals $0, 1$ or $\infty$. Now, we apply Csorba reduction to each element in $C_J$, and reduce them to $0, 1, 2$ or $\infty$ .

If $c_0=2$, then $v_0 > v_{-1}$ and $v_{-1}$ becomes a preleaf in $G-v_0$, so $I_H \sim_h \Sigma I(H-S)$, where $S$ is the square containing vertices $b_l,v_{-1}, v_0$ and an additional vertex. The graph $H-S$ is an induced subgraph of a fan, so Proposition \ref{Propfan} completes the proof. Analogous result holds for the case when other element in $C_J$ equals $2$.

Otherwise, $H$ is an induced subgraph of a simple rhomboid graph, and Proposition \ref{SimRhomboid} completes the proof. 
\end{proof}

\section{Braid and Temperley-Lieb diagrams}\label{Section 5}

The basic objects in this paper are $n$-braid diagrams which, from an algebraic point of view, are elements (words) of the free monoid generated by $2n-2$ letters $$M_n=\{\sigma_1,\sigma_1^{-1},\sigma_2,\sigma_2^{-1},\ldots, \sigma_{n-1},\sigma_{n-1}^{-1}\}$$ with the identity element $\varepsilon$ representing the empty word and geometric interpretation of generators as shown in Figure \ref{figgene}(a). We stress that braid group relations\footnote{Presentation of the braid group on $n$ strands was given by Artin \cite{Ar1}: \\ $B_n=\{\sigma_1,...,\sigma_{n-1} \ | \ \sigma_i\sigma_{i+1}\sigma_i=\sigma_{i+1}\sigma_i\sigma_{i+1}, \,  \sigma_i\sigma_j=\sigma_j\sigma_i \mbox{ for } |i-j|>1\}$.} do not hold in $M_n$. Define the submonoids $M_n^+ = \{\sigma_1, \ldots, \sigma_{n-1}\}\subset M_n$ and $M_n^- = \{\sigma_1^{-1}, \ldots, \sigma_{n-1}^{-1}\}\subset M_n$. Given a word $w\in M_n$, we write $w^+ \in M_n^+$ for the word obtained from $w$ by deleting those letters with negative exponents. We say that $w$ is positive if $w = w^+ \in M_n$. A subword\footnote{Note that a subword of a cyclic word is not a cyclic word.} of $w$ is the word consisting of some consecutive letters of $w$. 

\begin{figure}
    \centering
    \includegraphics[width = 9cm]{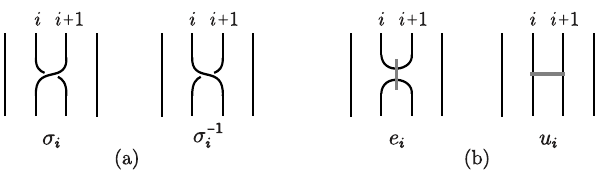}
\caption{\small{Geometric interpretation of the generators of the monoid $M_n$ (a) and the monoid $M_n^{TL}$ (b).}}\label{figgene}
\end{figure}

Define $\tilde M_n$ as the quotient of $M_n$ modulo cyclic permutation of letters in a word. Given a word (braid diagram) $w\in M_n$, we write $\tilde{w} \in \tilde{M}_n$ for the associated cyclic word (closed braid diagram, which is also a link diagram). 

Consider now the free monoid $$M^{TL}_n=\{e_1,u_1,e_2,u_2,...,e_{n-1},u_{n-1}\},$$ whose elements are called Temperley-Lieb diagrams. The geometric interpretation of $M_n^{TL}$, shown in Figure \ref{figgene}(b), is motivated by that of the original Temperley-Lieb algebra\footnote{Temperley-Lieb algebra was introduced in \cite{TL}, with formal algebraic definition by Baxter and Jones \cite{Bax, Jon1}: in the ring $\mathbb{Z}[d]$, $TL_n=  \{e_1,...,e_{n-1} \, | \, e_i^2=de_i, \, e_ie_{i{\pm 1}}e_i = e_i, \, e_ie_j=e_je_i \mbox{ for } |i-j|\geq 2\}$. Its geometric interpretation, which we use, was introduced by Kauffman \cite{Kau3,KL}.}. As before, we define $\tilde M^{TL}_n$ to be the quotient of $M_n^{TL}$ modulo cyclic permutation. 

There is an isomorphism $\varphi: M_n \to M^{TL}_n$ with $\varphi(\sigma_i)=e_i$ and $\varphi(\sigma_i^{-1})=u_i$. Domain and codomain of $\varphi$ have different geometric interpretations: given a braid diagram $w \in M_n$, $\varphi(w) \in M^{TL}_n$ is a crossingless tangle diagram with additional chords. 

We say that $\varphi(\tilde w) = \widetilde{(\varphi(w))}$ is the chord diagram of $w$. Observe that, if we write $D=\tilde{w}$, then $\varphi(\tilde{w})$ equals $s_BD$ from Definition \ref{defLando} (compare to Figure \ref{braid3}).  We write $G(w)$ for the Lando graph associated to $\varphi(\tilde{w})$ and $I(w)$ for its associated independence complex. 

\begin{figure}
    \centering
    \includegraphics[width = 11cm]{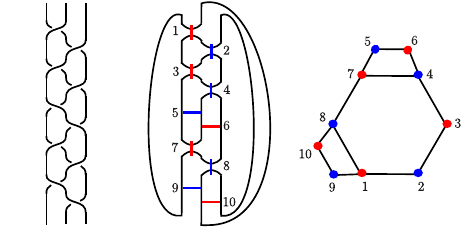}
\caption{\small{Braid diagram $w=\sigma_1\sigma_2\sigma_1\sigma_2\sigma_1^{-1}\sigma_2^{-1}\sigma_1\sigma_2 \sigma_1^{-1}\sigma_2^{-1}$ in $M_3$, its image $\varphi(\tilde{w})=e_1e_2e_1e_2u_1u_2e_1e_2u_1u_2 \in \tilde{M_3}^{TL}$ and the associated Lando graph $G(w)$, with $I(w) \sim_h S^2$.}}\label{braid3}
\end{figure}

Braid relations are not allowed in $M_n$ because, in general, they do not preserve $G(w)$ neither the homotopy type of $I(w)$. In particular, second and third Reidemeister moves can lead from a contractible independence complex to a non-contractible one. 

\begin{example}\label{exreidemeister}
\begin{enumerate}
\item Let $w_1 = (\sigma_1\sigma_2\sigma_1)^2$ and $w_2= (\sigma_1 \sigma_2)^3$ be two braid diagrams in $M_3$. It is clear that $w_1$ and $w_2$ represent the same braid in the braid group. However, $G(w_1)$ consists of two isolated vertices and therefore $I(w_1)$ is contractible, while $G(w_2)$ is an hexagon and $I(w_2) \sim_h S^1 \vee S^1$.

\item The braid words $w_1 = \sigma_1\sigma_1^{-1}\sigma_2\sigma_1\sigma_1^{-1}\sigma_2\sigma_1, \, \, w_2= \sigma_2\sigma_1\sigma_1^{-1}\sigma_2\sigma_1$ \, $w_3~=~ \sigma_2^2\sigma_1$ are related by Reidemeister-2 moves (i.e., cancellations of $\sigma_1 \sigma_1^{-1}$). However, $I(w_1) \sim_h S^1$, while $I(w_2)$ and $I(w_3)$ are contractible. 
\end{enumerate}
\end{example}

Observe that negative third Reidemeister move (i.e., $\sigma_i^{-1}\sigma_{i+1}^{-1}\sigma_i^{-1} \, \leftrightarrow \,$ $\sigma_{i+1}^{-1}\sigma_i^{-1}\sigma_{i+1}^{-1}$) and relation $\sigma_i^{\epsilon} \sigma_j^{\delta} \leftrightarrow \sigma_j^{\delta}\sigma_i^{\epsilon}$ where $|i-j|>1$ and $\epsilon, \delta \in \{1,-1\}$ preserve Lando graph. However, we decided not to incorporate them in the definition of $M_n$ as it could lead to confusion in Sections \ref{secpositive} and \ref{sectnegative}, where we consider positive braid diagrams allowing the possibility of decorating them with negative letters.

\section{Main result and outline of its proof}\label{Section 6}

The main result in this paper allows to fully characterize in polynomial time the extreme Khovanov homotopy type of closed 4-braid diagrams: 

\begin{theorem}\label{theomain}
Let $w$ be a braid diagram on $4$ strands and write $\hat{w}$ for its closure. Then, the homotopy type of the geometric realization $I(w)$ of the extreme Khovanov homology of $\hat{w}$ can be computed in polynomial time. Moreover, if not contractible, $I(w)$ is homotopy equivalent to either a sphere, or a wedge of two spheres, or $S^k \vee S^i \vee S^i$, or $S^k \vee S^i \vee S^i \vee S^i$, with $k \geq i$. 
\end{theorem}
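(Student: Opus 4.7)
The plan is to reduce the analysis of the homotopy type of $I(w)$ for an arbitrary 4-braid diagram $w\in M_4$ to the computation of the independence complex of an induced subgraph of a (mod-)augmented rhomboid graph, and then invoke Corollary~\ref{cormodaugmented4spheres} directly. The target list of possible homotopy types in the statement of Theorem~\ref{theomain} matches exactly the list in Corollary~\ref{cormodaugmented4spheres}, which is the main clue that this is the right reduction target.

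First I would introduce a notion of ``strongly reduced'' 4-braid word, designed so that elementary moves that preserve the Lando graph up to the operations available to us (deletion of a loop's base, identification of parallel edges from Remark~\ref{loop}, cyclic permutation in $\tilde M_4$, and the edge-and-leaf manipulations of Section~\ref{secbasic}) can be applied to any $w$ until it is strongly reduced; this is the content planned for Section~\ref{secstronglyred}. Along the way one tracks cancellations $\sigma_i\sigma_i^{-1}$ and $\sigma_i^{-1}\sigma_i$ which, as Example~\ref{exreidemeister}(2) warns, do \emph{not} give braid equivalence but do visibly simplify $G(w)$, together with the ``safe'' moves (disjoint commutation and negative Reidemeister~III) that genuinely preserve the Lando graph. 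The output is a strongly reduced word $w'$ with $I(w)\sim_h I(w')$, where one can read off directly when $I(w')$ is contractible or a single sphere (the forest/polygon case covered by Proposition~\ref{Koz}).

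The substantial work, carried out in Sections~\ref{secpositive} and \ref{sectnegative}, is to show that in every remaining case the Lando graph $G(w')$ is an induced subgraph of a (mod-)augmented rhomboid graph in the sense of Definitions~\ref{defallrhomboids}, \ref{modaugmented}, and \ref{nosimpledef}. I would separate this into the purely positive case first: for a positive 4-braid $w^+\in M_4^+$, the Temperley--Lieb diagram $\varphi(\tilde{w^+})$ decomposes into three horizontal levels (one per generator $e_1,e_2,e_3$) and the chord-in-same-circle condition forces a global ``spine plus side structure''. The spine corresponds to a maximal alternating $e_2\cdots$ subword, the two side fans come from the outermost $e_1$ and $e_3$ chords, and the vertices $b_\ell,b_r,b_2,c_i,d_i,v_i,v_i'$ of Definition~\ref{defallrhomboids} get identified explicitly with specific chords. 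I would prove this embedding by induction on the number of crossings, using Proposition~\ref{trickremovingedge} to handle edge deletions that appear at the boundary of subdivisions. For the mixed case with negative letters (Section~\ref{sectnegative}), the same embedding still works, but the negative generators either disconnect the spine (leading to mod-augmented variants via the contraction/deletion moves of Definition~\ref{modaugmented}) or act as dominated vertices that can be removed by the Domination Lemma; here one also uses Corollary~\ref{leaf} aggressively. The hardest part I expect is precisely this step, namely the combinatorial bookkeeping showing that after all strong reductions no other Lando-graph topology can appear on 4 strands: one has to rule out $K_4$-like substructures at the triple intersections of the three fans and the two side paths. Example~\ref{Example 3.17} and Example~\ref{Example 3.18} explain why the answer never exceeds four spheres, and the dimension inequality $k\geq i$ follows from Corollary~\ref{corollarydimensions} together with Lemma~\ref{Lemma123} applied to the spine.

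Finally the polynomial-time claim is immediate from the structure of the argument: the strong reduction terminates in $O(c^2)$ elementary operations on $w$, each involving only local scans of the word; identifying $G(w')$ as an induced subgraph of the appropriate rhomboid graph requires a linear pass through the chord diagram $\varphi(\tilde{w'})$; and the final computation of $I(G(w'))$ is polynomial by Corollary~\ref{cormodaugmented4spheres}. Combined, the whole procedure is polynomial in the number of crossings of $w$, which also establishes Conjecture~\ref{Conjecture 1}(b), Conjecture~\ref{Conjecture 3}, and Conjecture~\ref{Conjecture 4} in the 4-braid setting, and yields Corollary~\ref{summarykhov} via Theorem~\ref{teoGMS}.
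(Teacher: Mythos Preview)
Your overall strategy---reduce to a strongly-reduced word, then control the Lando graph---matches the paper, and the endgame via Corollary~\ref{cormodaugmented4spheres} is correct for the hardest cases. But the central claim that ``in every remaining case the Lando graph $G(w')$ is an induced subgraph of a (mod-)augmented rhomboid graph'' is false, and this is where the proposal breaks down.

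The paper's actual argument first classifies strongly-reduced positive 4-braids into six families $\mathcal{C}_0$--$\mathcal{C}_5$ (Proposition~\ref{B4classification}), extended to arbitrary strongly-reduced words via $w^+$. Families $\mathcal{C}_0,\mathcal{C}_1,\mathcal{C}_2,\mathcal{C}_5$ are handled directly and separately (Proposition~\ref{easycases}): for instance $\mathcal{C}_1$ gives polygons, $\mathcal{C}_2$ gives disjoint unions of paths, and $\mathcal{C}_5$ gives a \emph{disjoint union of two Lando graphs of 2-bridge diagrams}---none of which are in general induced subgraphs of augmented rhomboids. Only $\mathcal{C}_3$ and $\mathcal{C}_4$ lead to rhomboid-type graphs, and even there only after substantial additional work that your outline does not anticipate.

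For $w\dot\in\mathcal{C}_3$ (and $\mathcal{C}_4$, which reduces to it by Proposition~\ref{c4toc3}), negative letters do \emph{not} simply ``disconnect the spine or act as dominated vertices''. The paper splits $\tilde w$ into a head $w_H$ and tail $w_T$ (Definition~\ref{defheadtail}), enumerates all possible maximal heads explicitly (Proposition~\ref{propoptions}), and shows that words with \emph{positive} tail embed into the graphs of Figure~\ref{romboidshead}. Negative letters in the tail create new vertices (``spiders'', Definition~\ref{defspider}) attached to $b_\ell,b_r,b_2,b_2'$ and to one spine vertex $v^s$; these lie \emph{outside} the rhomboid pattern and must be eliminated by a separate case analysis on the distance $d$ from $v^s$ to the nearest spoke vertex (the four cases $d=0,1,2,\infty$ in Section~\ref{secttail}), after first removing all $\sigma_2^{-1}$ from the tail via the splitting-vertex method of Lemma~\ref{elimination2tail}. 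None of this machinery---the six-family classification, the head/tail decomposition, or the spider elimination---appears in your outline, and without it the reduction to Corollary~\ref{cormodaugmented4spheres} cannot be carried out.
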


As a consequence of the proof we get the following result, which we prove in Section \ref{secpositive}:

\begin{corollary}\label{indepeasy}
Given a positive braid diagram $w \in M_4^+$, the homotopy type of the geometric realization $I(w)$ of the extreme Khovanov homology of $\hat{w}$ can be computed in polynomial time. Moreover, $I(w)$ is either contractible or has the homotopy type of either a sphere or the wedge of two spheres.
\end{corollary}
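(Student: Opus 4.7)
The plan is to analyze the Lando graph $G(w)$ of the chord diagram $\varphi(\tilde w)$ when $w \in M_4^+$ is a positive $4$-braid, and to show that after the standard simplifications of Section~\ref{Section 3} it becomes an induced subgraph of one of the ``tame'' families treated there — specifically, a rhomboid graph in the sense of Definition~\ref{nosimpledef}. Once that structural reduction is in place, Proposition~\ref{rhomboid2spheres} delivers the conclusion immediately.

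First I would split into sub-cases by the set of generators appearing in $w$. If some letter $\sigma_i \in \{\sigma_1, \sigma_2, \sigma_3\}$ does not occur — in particular, if $w$ uses only the commuting pair $\{\sigma_1, \sigma_3\}$ — then the chord diagram decomposes and $G(w)$ splits as a disjoint union of Lando graphs arising from $2$- or $3$-braid positive pieces. Each such component is a forest or a polygon (or an induced subgraph thereof), and Proposition~\ref{Koz} handles them: each factor is contractible or a single sphere. Combining with the join formula $I(G_1 \sqcup G_2) = I(G_1) * I(G_2)$ recalled in Remark~\ref{loop} gives that $I(w)$ is contractible, a sphere, or a wedge of two spheres in these cases.

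The essential case is when $w$ contains all three generators. Here I would prove the structural claim: the Lando graph $G(w)$ is (an induced subgraph of) a rhomboid graph, with the $\sigma_2$-chords playing the role of the spoke vertices $v_1,\ldots,v_n$, the $\sigma_1$-chords attaching along one ``side'' via the cap vertex $b_\ell$, and the $\sigma_3$-chords attaching along the other side via $b_r$. This reflects the geometry of $B$-smoothing a positive $4$-braid: two $\sigma_2$-chords in a common circle alternate iff there is an odd number of $\sigma_1$'s (or of $\sigma_3$'s) between them; an $\sigma_1$-chord and an $\sigma_2$-chord interact only when they are consecutive on a strand, which produces precisely the cap-to-spine edges of a rhomboid; and analogously for $\sigma_3$. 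The closure of the braid contributes the ``wrap-around'' edges that realize the full rhomboid structure. I would make this precise by reading off the chord diagram block by block from the cyclic word $\varphi(\tilde w) \in \widetilde M_4^{TL}$, and then use Remark~\ref{loop} together with Corollary~\ref{leaf} and the Domination Lemma to dispose of any loops, multi-edges, or leaves introduced along the way.

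With $G(w)$ identified as an induced subgraph of a rhomboid graph, Proposition~\ref{rhomboid2spheres} immediately yields that $I(w)$ is contractible or homotopy equivalent to a sphere or a wedge of two spheres, and gives a polynomial-time algorithm for computing the homotopy type. The polynomial bound on the whole corollary follows since constructing $\varphi(\tilde w)$ and $G(w)$ from $w$ is linear in the number of crossings, and the reductions used in Proposition~\ref{rhomboid2spheres} (Csorba moves, domination, leaf removal) are each polynomial. The main obstacle I anticipate is the structural claim in the previous paragraph: one must check carefully that no configuration forcing the \emph{augmented} rhomboid geometry of Definition~\ref{defallrhomboids}(3) — which can lead to wedges of three or four spheres — can arise from a purely positive $4$-braid. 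Positivity is essential here, since the examples in Section~\ref{sectnegative} show that allowing negative letters genuinely breaks the two-sphere bound.
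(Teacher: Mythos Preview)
Your overall plan---reduce $G(w)$ to a rhomboid graph and invoke Proposition~\ref{rhomboid2spheres}---matches the paper's argument for most of the classification, but the central structural claim is false as stated. The paper first applies Corollary~\ref{Reduction} to pass to a strongly-reduced word and then uses the classification of Proposition~\ref{B4classification}; the families $\mathcal{C}_0$--$\mathcal{C}_4$ are exactly those whose Lando graphs are forests, polygons, or rhomboid graphs (Proposition~\ref{Lando4pos}), and there your approach works. The problem is the family $\mathcal{C}_5$, consisting of strongly-reduced positive words in which the cyclic word contains at least two occurrences of $\sigma_1\sigma_3$ (or $\sigma_3\sigma_1$). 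These certainly use all three generators, yet Proposition~\ref{Lando4pos}(f) shows that $G(w)$ is \emph{not} a rhomboid graph but rather the disjoint union $G(D_1)\sqcup G(D_2)$ of Lando graphs of two $2$-bridge diagrams. The paper handles this case by an entirely different mechanism: it cites the result from \cite{GeomDed} that the independence complex associated to an alternating diagram is contractible or a sphere, and then uses the join formula.

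So the obstacle you anticipated is misidentified. The danger is not that positivity might accidentally produce the augmented rhomboid geometry (it does not, as you guessed); the danger is that two or more $\sigma_1\sigma_3$ blocks cut the chord diagram into pieces that are no longer governed by the rhomboid picture at all. Your heuristic description of the $\sigma_1$/$\sigma_2$/$\sigma_3$ interactions implicitly assumes a single ``cap'' on each side, which is precisely what fails in $\mathcal{C}_5$. To repair the argument you need either to prove directly that Lando graphs of positive $2$-bridge pieces are themselves contractible-or-sphere (which is what \cite{GeomDed} supplies via alternating-link methods), or to follow the paper and treat $\mathcal{C}_5$ as a separate case after the classification. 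Either way, you should also make the preliminary reduction to strongly-reduced form explicit (Corollary~\ref{Reduction}), since the classification only applies there and the positive-square-free condition is what makes the $\mathcal{C}_0$--$\mathcal{C}_5$ dichotomy exhaustive.
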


The first step in the proof of Theorem \ref{theomain} is to define a class of braid diagrams that we call strongly-reduced (Definition \ref{defstronglyred}) and give a polynomial time algorithm that, given a braid diagram $w \in M_n$, either determines that $I(w)$ is contractible, or reduces\footnote{$w^{red}$ is obtained from $w$ by erasing some of its letters.} $w$ to a strongly-reduced braid diagram $w^{red}$ so that there exists an induced subgraph $H$ of $G(w^{red})$ with the property that $I(w)$ is homotopy equivalent to the $k^{th}$-suspension of $I(H)$ for some $k\geq 0$ (Corollary \ref{Reduction}). This result allows us to restrict our study to  strongly-reduced 4-braid diagrams.

Notice that a $B$-smoothing at a positive generator produces a horizontal smoothing with a vertical chord, while negative generators produce vertical smoothings with horizontal chords. Therefore, the \textit{shape} of the circles in the $B$-smoothed diagram is determined by positive generators in $w$. For this reason, we first study Lando graphs arising from positive braid diagrams (Section \ref{secpositive}). To do so, we classify strongly-reduced positive braid diagrams into six families, $\mathcal{C}_0 - \mathcal{C}_5$ (Proposition \ref{B4classification}). We later extend this classification to strongly-reduced braid diagrams. 

Next step is to study how negative letters of $w$ contribute to $G(w)$, i.e., how the graph $G(w^+)$ is extended when adding negative letters to $w^+$ to obtain $w$. This addition can be done one at a time, since each negative letter corresponding to an admissible (horizontal) chord adds one vertex together with some edges to the Lando graph $G(w^+)$. 

Strongly-reduced 4-braid diagrams were classified into six families $\{\mathcal{C}_i\}_{i=0}^{5}$. In Proposition \ref{easycases} we determine $G(w)$ and the homotopy type of $I(w)$ for $w \in \mathcal{C}_i$ with $i= 0, 1, 2, 5$. The cases when $w$ belongs to $\mathcal{C}_3$ or $\mathcal{C}_4$ require more effort, and they are addressed in Section \ref{sectionc3}. In order to study the effect of each negative letter of $w$ in $G(w)$, we divide $\tilde w$ into two parts, that we call \textit{head} and \textit{tail} (Definition \ref{defheadtail}).

We first prove Theorem \ref{theomain} for the family of braid words having positive tail (Proposition \ref{subwordofmaximalhead}). Finally, in Section \ref{secttail} we deal with those negative letters appearing in the tail, showing that they can be eliminated (Proposition \ref{reducetopositivetail}) and reducing this case to the previous one. Working with negative letters in the head of $w$ leads to \textit{augmented} and \textit{mod-augmented rhomboid graphs} (see Definitions \ref{modaugmented} and \ref{nosimpledef}). Determining the homotopy type of the independence complexes of these families of graphs and those of their induced subgraphs is crucial in our work. This was analyzed in Section \ref{sectfan}.

\section{Strongly-reduced braid diagrams}\label{secstronglyred}

In this section we introduce the set of strongly-reduced $n$-braid diagrams $M_n^{red}$ and show that any braid diagram $w$ can be reduced in polynomial time to an element $w^{red}\in M_n^{red}$ whose Lando graph $G(w^{red})$ allows to compute $I(w)$ in polynomial time. 

\begin{definition}\label{defstronglyred} Let $w\in M_n$.
\begin{enumerate}
\item[(a)] $w$ is positive square free if $\tilde w$ does not contain a subword $\sigma_iw_1\sigma_i$ so that $\sigma_j \notin w_1$ for
$|i-j| \leq 1$.
\item[(b)] $w$ is negative square free if $\tilde w$ does not contain a subword $\sigma_i^{-2}$ or $\sigma_i^{-1}\sigma_{i\pm 1}^{-1}\sigma_i^{-1}$. 
\item[(c)] $w$ is nesting free if $\tilde w$ does not contain a subword $\sigma_iw_1\sigma_i^{-1}w_2\sigma_i^{-1}$ or $\sigma_i^{-1}w_2\sigma_i^{-1}w_1\sigma_i$, where $\sigma_j$ is neither in $w_1$ nor $w_2$ for $|i-j|\leq 1$.
\item[(d)] $w$ is $R_2$-reduced if $\tilde w$ does not contain a subword $\sigma_iw_1\sigma_i^{-1}$ or $\sigma_i^{-1}w_1\sigma_i$, where $\sigma_j^{\pm 1}$ is not in $w_1$ for $|i-j|\leq 1$. $w$ is $R_2$-reducible if it is not $R_2$-reduced. 
\end{enumerate}
We say that $w \in M_n$ is strongly-reduced if it is square free (positive and negative), nesting free, and $R_2$-reduced. Figure \ref{reductions} illustrates this definition.
\end{definition}

Observe that the set of positive strongly-reduced braid diagrams embeds into the set of strongly-reduced braid diagrams. Furthermore, if $w \in M_n$ is strongly-reduced, then so is $w^+\in M_n^+$.

\begin{lemma}(Positive square reduction Lemma)\label{SquareReduction}
Consider a braid diagram $w\in M_n$ that is not positive square free, and let the cyclic word $\tilde w \in \tilde M_n$
contains the subword $w_0=\sigma_iw_1\sigma_i$ where $w_1\in M_n$ does not contain a letter $\sigma_j$ with $|i-j|\leq 1$. Then
\begin{enumerate}
\item[(a)] The chord diagram $\varphi (\tilde w)$ has at least two components (circles) and one of them, say $C_1$ (see Figure \ref{reductions}(a)) has either Lando graph $G_{C_1} \subset G(w)$ with an isolated vertex and thus $I(G_{C_1})$ is contractible (this is the case when there is $u_i \in \varphi(w_1)$), or $G_{C_1}$ is empty, so $I(G_{C_1})\sim_h S^{-1}$ (this is the case if $u_i \notin \varphi(w_1)$).  

\item[(b)] Let $w_1'$ be the word in $M_n$ obtained from $w_1$ by deleting all letters of type $\sigma_j^{-1}$, $|i-j|\leq 1$. Define the new cyclic word $\tilde w'$ obtained form $\tilde w$ by replacing the subword $w_0= \sigma_iw_1\sigma_i$ by $w_0'=\sigma_iw'_1$. Then the Lando graph corresponding to the chord diagram $\varphi(\tilde w)- C_1$ is obtained from the Lando grap $G(\tilde w')$ by deleting the vertex $v_{\sigma_i}$ associated to $\sigma_i \in w_0'$.

\item[(c)] $I(w)$ is either contractible or it is the independence simplicial complex of an induced subgraph of a closed braid diagram which is positive square free. This process has polynomial time complexity.
\end{enumerate}
\end{lemma}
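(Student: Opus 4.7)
My approach is to read the Lando graph directly off the chord diagram $\varphi(\tilde w)$ and exploit the fact that the subword $\sigma_iw_1\sigma_i$ forces a localized sub-picture in the resolution.

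For part (a), the two occurrences of $\sigma_i$ become two $e_i$-tiles in $\varphi(\tilde w)$, each producing a cap and a cup joining strands $i$ and $i+1$ together with one vertical chord. Since $w_1$ contains no $\sigma_j$ with $|i-j|\leq 1$, no horizontal cap/cup resolution touches strands $i$ and $i+1$ between these two $e_i$'s, so those arcs close off into a single circle $C_1$. The chords having both endpoints on $C_1$ are exactly the horizontal chords arising from letters $u_i$ in $\varphi(w_1)$; chords from $u_{i\pm 1}$ touch $C_1$ on only one side, and chords from $u_j$ with $|i-j|>1$ do not touch $C_1$ at all. Any two such $u_i$-chords are parallel at the pair of strands $(i,i+1)$, so they cannot alternate and induce no edges in $G_{C_1}$. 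Hence $G_{C_1}$ is either empty (if $u_i\notin\varphi(w_1)$, giving $I(G_{C_1})\sim_h S^{-1}$), or a discrete set of vertices; in the latter case each such vertex is isolated in $G(w)$, and $I(G_{C_1})$ is a cone, hence contractible. Since $G(w)=G_{C_1}\sqcup G'$ for $G'$ the Lando graph of the remaining circles, the join decomposition $I(w)\sim_h I(G_{C_1}) * I(G')$ from Remark \ref{loop} yields either $I(w)\sim_h I(G')$ or $I(w)\simeq *$, as claimed.

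For part (b), I would compare $\varphi(\tilde w)$ and $\varphi(\tilde w')$ block by block. The letters deleted in passing from $w_1$ to $w_1'$ are exactly those whose $u$-chord touches $C_1$ (either from within via $u_i$, or by sharing a single endpoint via $u_{i\pm 1}$). Collapsing the two $e_i$'s into one $e_i$ removes the cap/cup pair that closed off $C_1$, so in $\varphi(\tilde w')$ the circles that were adjacent to $C_1$ in $\varphi(\tilde w)$ merge into a single circle carrying the surviving vertical chord; this chord becomes the vertex $v_{\sigma_i}$ of $G(\tilde w')$. Chords coming from letters outside the block $\sigma_iw_1\sigma_i$ are untouched, while the chords from letters remaining in $w_1'$ all have $|i-j|>1$ and so do not interact with strands $i,i+1$. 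Checking that the incidence of each remaining chord with the circles of $\varphi(\tilde w')$ agrees, after deletion of $v_{\sigma_i}$, with its incidence in $\varphi(\tilde w) - C_1$ yields the identification $G(\varphi(\tilde w)-C_1)=G(\tilde w')-v_{\sigma_i}$.

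Part (c) follows by iteration: while $\tilde w$ contains a positive square subword, apply parts (a) and (b); either some iteration forces $I(w)\simeq *$ and we stop, or we eventually arrive at a positive-square-free diagram whose Lando graph contains $G(w)-G_{C_1}$ as an induced subgraph. Each iteration deletes at least one letter from $w$, so the process terminates in linearly many steps, and detecting a positive square subword takes quadratic time in the length of $w$, so the whole reduction runs in polynomial time. The main obstacle will be part (b): rigorously verifying that collapsing the two $e_i$'s together with deleting the surrounding $\sigma_j^{-1}$ letters ($|i-j|\leq 1$) from $w_1$ preserves the global Lando graph structure away from $v_{\sigma_i}$. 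This will require a careful case analysis of the circles of $\varphi(\tilde w)$ adjacent to $C_1$, which may coincide or be distinct depending on whether the surrounding arcs reconnect nontrivially under the closure.
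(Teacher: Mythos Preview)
Your proposal is correct and follows the same approach as the paper: direct analysis of which chords are essential on the circle $C_1$ cut off by the two $e_i$-tiles. The paper's own proof is extremely terse---it simply states that parts (a) and (b) follow from observing that the only essential chords on $C_1$ come from $\sigma_i^{-1}\in w_1$, and that (c) follows from (a) and (b)---so your version just supplies the details (the nesting argument showing the $u_i$-chords do not alternate on $C_1$, the block-by-block comparison for (b), and the explicit iteration with termination bound for (c)) that the paper leaves implicit.
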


\begin{figure}
\centering
\includegraphics[width = 9cm]{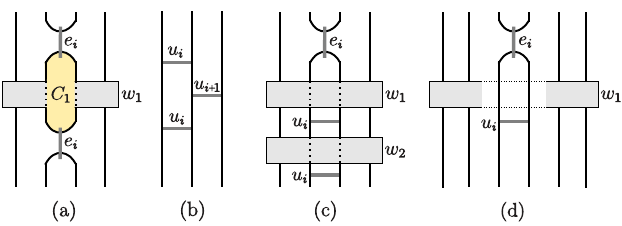}
\caption{\small{Situations forbidden in chord diagrams associated to positive square free (a), negative square free (b), nesting free (c) and $R_2$-reduced braid diagrams (d).}}
\label{reductions}
\end{figure}

\begin{proof} Parts (a) and (b) follow by careful analysis of chords attached to component $C_1$: the only essential chords, i.e. with both endpoints attached to the same circle, are those coming from $\sigma_i^{-1} \in w_1$. Part (c) follows from (a) and (b).
\end {proof}

\begin{lemma}\label{negativesquare} (Negative square reduction Lemma)
Let $w\in M_n$ and $w_0= \sigma_i^{-1}\sigma_{i\pm 1}^{-1}\sigma_i^{-1}$ be a subword of the cyclic word $\tilde w$. Then $I(w) \sim_h I(w')$, with $w'$ the braid diagram obtained from $w$ by deleting one of the $\sigma_i^{-1}$ occurrences in $w_0$.
\end{lemma}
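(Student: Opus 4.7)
Without loss of generality I focus on the subword $w_0=\sigma_i^{-1}\sigma_{i+1}^{-1}\sigma_i^{-1}$ and on the word $w'$ obtained by deleting its first $\sigma_i^{-1}$; the case with $\sigma_{i-1}^{-1}$ in the middle is symmetric, and deleting the last $\sigma_i^{-1}$ is the mirror of deleting the first (one can also pre-apply the negative $R_3$ move $\sigma_i^{-1}\sigma_{i+1}^{-1}\sigma_i^{-1}\leftrightarrow \sigma_{i+1}^{-1}\sigma_i^{-1}\sigma_{i+1}^{-1}$, which preserves the Lando graph by the remark following Example~\ref{exreidemeister}, to swap the roles of $i$ and $i+1$). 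Write $c_a,c_b,c_c$ for the three chords of $\varphi(\tilde w)$ coming from the three letters of $w_0$ (in order) and $v_a,v_b,v_c$ for the associated potential vertices of $G(w)$.

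The plan is to extract a strong local constraint on $v_b$ and then carry out a Domination-Lemma reduction from $G(w)$ to $G(w')$. First I will invoke the Temperley-Lieb identity $u_iu_{i+1}u_i = u_i$ as a planar isotopy of the $B$-smoothed skeleton near $w_0$: this straightens the local zig-zag into a single $u_i$ while dragging the three chords into a standard position, with $c_a$ bridging the top cap of $u_i$ to the middle strand, $c_c$ bridging the middle strand to the bottom cap of $u_i$, and $c_b$ having both endpoints on a short arc $\alpha$ of the middle strand that is flanked by the middle-strand endpoints of $c_a$ and $c_c$. Because $\alpha$ lies strictly inside the subword region, no chord of $w$ coming from outside $w_0$ can land on $\alpha$; hence the only chords that can alternate with $c_b$ on the common middle circle are $c_a$ and $c_c$ themselves, and the neighbourhood of $v_b$ in $G(w)$ is always a subset of $\{v_a,v_c\}$ (with $v_a,v_c$ present only when the top-cap circle, respectively the bottom-cap circle, coincides with the middle circle in the closure).

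A short case analysis then completes the proof. If $v_b$ is isolated in $G(w)$, it is a cone vertex of $I(G(w))$, so $I(G(w))$ is contractible; a direct check on the skeleton $u_{i+1}u_i$ of $w'$ shows $I(G(w'))$ is likewise contractible. In every other sub-case I would verify that $v_a$ (or $v_c$, by symmetry) dominates $v_b$ in $G(w)$, so that Lemma~\ref{domlemma} lets me delete the dominator; a side-by-side comparison of the $B$-smoothed closures of $w$ and $w'$, using that their skeletons agree outside $w_0$ and that the identity $u_iu_{i+1}u_i=u_i$ absorbs the discrepancy inside, then identifies the reduced graph with $G(w')$ up to the loop and multi-edge moves of Remark~\ref{loop}. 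The main obstacle will be this final bookkeeping: even with the short-arc inclusion $N_{G(w)}(v_b)\subseteq\{v_a,v_c\}$ in hand, one has to verify in each arrangement of the three circles involved that the required inclusion $N_{G(w)}(v_b)\setminus\{v_a\}\subseteq N_{G(w)}(v_a)\setminus\{v_b\}$ (or its $v_c$-analogue) really holds, and that the circles outside $w_0$ reassemble compatibly with the deletion of $c_a$; the TL planar isotopy from the first step is what keeps this case-by-case analysis tractable, by reducing the skeleton of $w$ to that of $w'$ plus the single ``dominated'' extra chord $c_b$.
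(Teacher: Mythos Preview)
Your argument rests on a misreading of the paper's conventions. Here $\varphi(\sigma_i^{-1})=u_i$ is the \emph{vertical} (identity) smoothing decorated with a horizontal chord; the cap--cup Temperley--Lieb generator is $e_i=\varphi(\sigma_i)$, and the relation $e_ie_{i+1}e_i=e_i$ lives on the positive side. So there is no ``local zig-zag to straighten'': the $B$-smoothed skeleton near $w_0=\sigma_i^{-1}\sigma_{i+1}^{-1}\sigma_i^{-1}$ is simply straight strands carrying three horizontal chords, $c_a,c_c$ at level $i$ and $c_b$ at level $i{+}1$. Your central claim $N_{G(w)}(v_b)\subseteq\{v_a,v_c\}$ is then false: $c_b$ runs between strands $i{+}1$ and $i{+}2$, and any admissible chord elsewhere in $\varphi(\tilde w)$ that crosses it---for instance one coming from a $\sigma_{i+1}^{\pm1}$ or $\sigma_{i+2}^{\pm1}$ far away in $w$---gives $v_b$ a neighbour outside $\{v_a,v_c\}$. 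The short-arc argument you sketch would work only if $c_b$ were trapped on a tiny isolated arc, which is exactly what the (inapplicable) cap--cup picture would produce.

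The paper instead compares the two \emph{outer} vertices $v_1,v_2$ (your $v_a,v_c$). No chord from outside $w_0$ can land on the short strand-$i$ or strand-$(i{+}1)$ arcs between the endpoints of $c_a$ and $c_c$, so every external chord alternates with $c_a$ if and only if it alternates with $c_c$; the only possible discrepancy is $c_b$. Hence one of $v_a,v_c$ dominates the other. Since $c_a$ and $c_c$ are parallel, non-crossing chords they are not adjacent in $G(w)$, so part~(1) of the Domination Lemma yields $I(G(w))\sim_h I(G(w)-v_{\mathrm{dom}})$. Because deleting a negative letter removes exactly one chord without altering the circles, $G(w)-v_{\mathrm{dom}}=G(w')$ and the lemma follows in one stroke. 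Your route through $v_b$ cannot be repaired without essentially reverting to this comparison of $v_a$ and $v_c$.
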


\begin{proof} 
Write $v_1$ and $v_2$ for the vertices in $G(w)$ corresponding to each of the letters $\sigma_i^{-1}$ in $w_0$ (in case these vertices do not belong in $G(w)$, then the proof is trivial). It follows from Figure \ref{reductions}(b) that in $G(w)$ either $v_1 > v_2$ or $v_2 > v_1$. Therefore, Domination Lemma applies, allowing to eliminate the letter corresponding to the dominating vertex. 
\end{proof}

\begin{remark}
Lemma \ref{negativesquare} allows to apply classical braid relation on negative generators of $M_n$, i.e., a negative Reidemeister 3 move. In other words, if $w'$ is obtained from $w$ by replacing $\sigma_i^{-1}\sigma_{i\pm 1}^{-1}\sigma_i^{-1}$ by $\sigma_{i\pm 1}^{-1}\sigma_i^{-1}\sigma_{i\pm 1}^{-1}$, then $I(w) \sim_h I(w')$. Compare to last paragraph in Section \ref{Section 5}.
\end{remark}

\begin{lemma}\label{Nesting} Let $w\in M_n$ be a braid diagram and $w_0$ be a subword of the cyclic word $\tilde w$: 
\begin{enumerate}
 \item[(a)] (Nesting Lemma) Consider $w_0=\sigma_iw_1\sigma_i^{-1}w_2\sigma_i^{-1}$ where $\sigma_j$ is neither in $w_1$ nor in $w_2$, for $|i-j|\leq 1$. Then, $I(w) \sim I(w')$, with $\tilde{w}'$ the word obtained from $\tilde{w}$ by replacing $w_0$ by $w_0'=\sigma_iw_1\sigma_i^{-1}w_2$.  
 \item[(b)] Analogue of part (a) holds for the case when $w_0=\sigma_i^{-1} w_2 \sigma_i^{-1} w_1 \sigma_i$, with analogous conditions for $w_1$ and $w_2$.
 \item[(c)] ($R_2$-reduction Lemma) Consider  $w_0=\sigma_iw_1\sigma_i^{-1}$, where $\sigma_j^{\pm 1}$ is not in $w_1$ for $|i-j|\leq 1$. Then either $I(w)$ is contractible or $I(w) \sim_h \Sigma I(G(w) - st(v_{\sigma_i}))$, with $v_{\sigma_i}$ the vertex associated to $\sigma_i \in w_0$. In particular, $I(w) \sim_h \Sigma I(H)$, where $H$ is an induced subgraph of $G(w')$ and $\tilde{w}'$ is obtained from $\tilde{w}$ after replacing $w_0$ by $\sigma_iw_1$.
 \item[(d)] Analogue of part (c) holds for the case when $w_0=\sigma_i^{-1}w_1\sigma_i$ with analogous conditions for $w_1$.
\end{enumerate}
\end{lemma}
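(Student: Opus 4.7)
The plan is to reduce all four parts to applications of the Domination Lemma (Lemma~\ref{domlemma}) and its leaf corollary (Corollary~\ref{leaf}) after a careful local analysis of the chord diagram around $w_0$. Throughout, let $u$ denote the Lando vertex associated with the positive crossing $\sigma_i$ (when the vertical chord of $\sigma_i$ has both endpoints on a single circle of $s_B\hat{w}$), and let $v,v_1,v_2$ denote the Lando vertices associated with the negative generators appearing in $w_0$. The common geometric ingredient is that since $w_1$ (and, in parts (a) and (b), also $w_2$) contains no positive letter $\sigma_j$ with $|i-j|\leq 1$ (and, in parts (c) and (d), also no $\sigma_j^{-1}$), strands $i$ and $i+1$ run through these sub-words with vertical smoothings only. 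Together with the cap above $\sigma_i$, they trace out a single distinguished sub-arc $U$ of a circle of $s_B\hat{w}$ that carries all of the relevant chord endpoints.

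For part~(c), the first step is to show that on this circle the only chord endpoint lying strictly inside the cap-side arc bounded by the chord of $v$ is the upper endpoint of the vertical chord of $u$, thanks to the hypothesis that $w_1$ is free of letters $\sigma_j^{\pm 1}$ with $|i-j|\leq 1$. If $u$ fails to be a Lando vertex, then $v$ is isolated in $G(w)$, hence a cone point of $I(w)$, which is therefore contractible. Otherwise $v$ is a leaf whose unique neighbour is $u$, so Corollary~\ref{leaf} yields $I(w)\sim_h \Sigma I(G(w)-st(u))$. The last assertion of (c) follows because removing $\sigma_i^{-1}$ from $w$ produces a word $w'$ with $G(w')=G(w)-v$, so $G(w)-st(u)$ is an induced subgraph of $G(w')$. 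Part~(d) is proved by the mirror-image argument.

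For parts (a) and (b), I would observe that the two horizontal chords of $v_1$ and $v_2$ sit as nested chords on a common circle through $U$ (the degenerate case in which one of them is not a Lando vertex again leads to a cone-point argument, or reduces to a single Domination Lemma step). In particular $v_1$ and $v_2$ are not adjacent in $G(w)$. The core calculation is to verify that $v_2$ dominates $v_1$, i.e.\ that $N(v_1)\setminus\{v_2\}\subseteq N(v_2)\setminus\{v_1\}$; this reduces to showing that every chord $e\in N(v_1)$ has its outside-$v_1$ endpoint also outside $v_2$'s region, equivalently, that no chord connects the $w_1$-strip of $U$ (or the cap of $\sigma_i$) to the $w_2$-strip of $U$. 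Since horizontal chords (from $\sigma_j^{-1}$) have their two endpoints at the same height and vertical chords (from $\sigma_j$) at essentially the same height, no such chord can exist. The non-adjacent case of the Domination Lemma then gives $I(w)\sim_h I(G(w)-v_2)=I(w')$, where $w'$ is obtained by deleting the outer $\sigma_i^{-1}$. Part~(b) follows by the mirror argument.

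The main obstacle will be the case analysis in (a) and (b) establishing that domination holds even though $w_1,w_2$ may still contain negative letters $\sigma_j^{-1}$ with $|i-j|\leq 1$. The key point is that such letters contribute only horizontal chords whose two endpoints are at the same height, so their endpoints either both lie within $v_1$'s region or one lies on a strand not on $U$ at all; in neither case do they violate the claimed inclusion of neighbourhoods. Once this is verified, the remaining manipulations are routine applications of the Domination Lemma and Corollary~\ref{leaf}.
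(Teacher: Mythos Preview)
Your proposal is correct and follows essentially the same approach as the paper's proof. The paper argues parts (a) and (b) in one sentence by asserting (with reference to a figure) that the vertex of the second $\sigma_i^{-1}$ dominates that of the first and then invoking the Domination Lemma, and argues (c) and (d) by observing that the $\sigma_i^{-1}$ vertex is either isolated or a leaf with preleaf $v_{\sigma_i}$; you supply the geometric justification (the nested-chord picture on the arc $U$ and the ``same height'' argument excluding chords between the $w_1$- and $w_2$-strips) that the paper leaves to the figure, but the logical skeleton is identical.
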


\begin{proof}
Part (a) follows from the fact that the vertex corresponding to the second occurrence of $\sigma_i^{-1}$ in $w_0$ dominates the vertex corresponding to the first occurrence of $\sigma_i^{-1}$, as shown in Figure \ref{reductions}(c); Domination Lemma completes the proof.

To prove part (c), see Figure \ref{reductions}(d) and notice that $\sigma_i^{-1} \in w_0$ gives rise to either an isolated vertex in $G(w)$, or to a leaf with preleaf $v_{\sigma_i}$, and Corollary~\ref{leaf} applies. 
\end{proof}

Lemmas \ref{SquareReduction}-\ref{Nesting} provide an algorithm reducing a braid diagram to a shorter braid diagram which is strongly-reduced. The exact meaning of this reduction is described in the next corollary which allows us to restrict ourselves to the study of strongly-reduced braid diagrams.

\begin{corollary}\label{Reduction}
 Given $w \in M_n$, there exists an algorithm of polynomial time complexity which either \begin{enumerate}
 \item[(a)] determines that $I(w)$ is contractible, or
 \item[(b)] finds a strongly-reduced braid diagram $w^{red}$ by deleting some letters from $w$, so that $I(w) \sim_h \Sigma^k I(H)$, with $H$ an induced subgraph of $G(w^{red})$ and $k$ a non-negative integer given by the algorithm.
\end{enumerate}
\end{corollary}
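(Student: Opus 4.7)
The plan is to iterate the four reduction lemmas \ref{SquareReduction}, \ref{negativesquare}, \ref{Nesting}(a)(b), and \ref{Nesting}(c)(d) until either contractibility is detected or a strongly-reduced diagram is produced, carrying along a suspension counter $k$ and the information that the current state represents $I(w)$ as $\Sigma^k I(H')$ for $H'$ an induced subgraph of the Lando graph of the current word.

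First I would set up the invariant maintained throughout the loop. At step $t$ the algorithm holds a word $w_t \in M_n$, a non-negative integer $k_t$, and (implicitly) an induced subgraph $H_t \subseteq G(w_t)$, such that
\[
I(w) \sim_h \Sigma^{k_t} I(H_t).
\]
Initially $w_0 = w$, $k_0 = 0$, $H_0 = G(w)$. The main loop scans $\tilde w_t$ for an occurrence of any of the four forbidden patterns from Definition \ref{defstronglyred}; this scan is clearly polynomial in the length of $w_t$ since each pattern is determined by a bounded-size combinatorial condition on positions. If no pattern is found, $w_t$ is strongly-reduced and we output $w^{red} = w_t$, $k = k_t$, $H = H_t$. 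If some pattern is found, I would apply the corresponding lemma as follows. A negative square or a nesting pattern is removed by Lemma \ref{negativesquare} or Lemma \ref{Nesting}(a)(b); these replace $w_t$ by a strictly shorter $w_{t+1}$ with $I(w_t) \sim_h I(w_{t+1})$, so $k_{t+1} = k_t$ and the induced subgraph $H_{t+1} \subseteq G(w_{t+1})$ is obtained by composing the domination/leaf deletions from the proofs. A positive square is handled by Lemma \ref{SquareReduction}: if the small circle $C_1$ carries an isolated vertex, the join structure makes $I(H_t)$ contractible and we STOP with output ``contractible''; otherwise we pass to the chord diagram with $C_1$ removed, which by \ref{SquareReduction}(b) is an induced subgraph of $G(w_{t+1})$ for an explicit shorter $w_{t+1}$, and $k_{t+1} = k_t$. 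An $R_2$-pattern is handled by Lemma \ref{Nesting}(c)(d): we either STOP with ``contractible'', or set $k_{t+1} = k_t + 1$ and pass to the induced subgraph $G(w_t) - \mathrm{st}(v_{\sigma_i})$, which the lemma identifies with an induced subgraph of $G(w_{t+1})$.

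Next I would verify termination and the polynomial bound. Each of the four reductions strictly decreases the number of letters of the current word, so the loop runs at most $|w|$ times. Each iteration performs one pattern search (polynomial) plus a local rewrite of the word (linear), so the whole algorithm is polynomial in $|w|$. The invariant is preserved because each lemma provides exactly the relation $I(w_t) \sim_h I(w_{t+1})$ or $I(w_t) \sim_h \Sigma I(H')$ with $H'$ an induced subgraph of $G(w_{t+1})$, and the suspension and ``induced subgraph'' properties both compose: a suspension of an induced-subgraph expression remains such an expression, and the composition of ``induced subgraph of an induced subgraph'' is an induced subgraph. At termination, either a contractibility verdict has been declared, in which case $I(w)$ is contractible by the invariant, or $w^{red} := w_t$ is strongly-reduced and $I(w) \sim_h \Sigma^{k_t} I(H_t)$ with $H_t$ an induced subgraph of $G(w^{red})$, which is the desired conclusion.

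The main obstacle, and the point that needs careful bookkeeping rather than a new idea, is the compatibility between the different lemmas: positive square reduction and $R_2$-reduction do not give a relation of the form $I(w_t) \sim_h \Sigma^{\epsilon} I(G(w_{t+1}))$ with the full Lando graph on the right, only with an induced subgraph. One must therefore track $H_t$ as a proper induced subgraph from the first time such a reduction fires, and show that any subsequent reduction applied to $w_{t+1}$ can be interpreted as a reduction of the induced subgraph $H_t$ as well. This works because dominations, leaf collapses, and join decompositions used in the proofs of Lemmas \ref{SquareReduction}--\ref{Nesting} only depend on the local neighborhood in $G(w_t)$ of the vertices being removed; if those vertices lie in $H_t$ with the same neighborhoods (which they do, since $H_t$ is induced and the removed letters in $w_{t+1}$ correspond to vertices still present in $H_t$), the same argument applies verbatim to $H_t$. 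This local-neighborhood check finishes the proof and is, as promised, purely mechanical.
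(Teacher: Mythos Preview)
Your approach is exactly the one the paper intends: iterate Lemmas \ref{SquareReduction}--\ref{Nesting} until none of the four forbidden patterns survives, count suspensions coming from $R_2$-reductions, and record the running induced subgraph. The paper itself offers no argument beyond the sentence ``Lemmas \ref{SquareReduction}--\ref{Nesting} provide an algorithm\ldots'', so your write-up is strictly more detailed than the original.

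One point of bookkeeping deserves more care than you give it. In the last paragraph you assert that ``the removed letters in $w_{t+1}$ correspond to vertices still present in $H_t$''. This need not hold: a vertex involved in a later pattern of $w_t$ may already have been deleted from $H_t$ by an earlier $R_2$- or positive-square step. The fix is not to assume this, but to split cases. If the vertex $v$ to be removed lies in $H_t$, the domination/leaf argument of the relevant lemma restricts to $H_t$ verbatim (domination is inherited by induced subgraphs, and a leaf in $G(w_t)$ is a leaf or isolated in $H_t$). If $v\notin H_t$, one simply deletes the corresponding (negative) letter from $w_t$; since $G(w_{t+1})=G(w_t)-v$ and $H_t\subseteq G(w_t)-v$, the invariant persists with $H_{t+1}=H_t$ and $k_{t+1}=k_t$. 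The same dichotomy disposes of the positive-square ``contractible'' branch: the isolated vertices on the small circle $C_1$ are isolated in any induced subgraph containing them, so if $H_t\cap G_{C_1}\neq\emptyset$ one halts, and otherwise $H_t\subseteq G(w_t)-G_{C_1}=G(w_{t+1})-v_{\sigma_i}$ and one proceeds. With this small refinement your argument is complete.
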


\section{Positive braid diagrams on 4 strands}\label{secpositive}

In this section we introduce a classification of strongly-reduced positive braid diagrams and use it to determine the homotopy type of the independence complexes associated to their closures. 

\begin{proposition}\label{B4classification}
Let $w\in M_4^+$ be a strongly-reduced positive braid diagram. Then, up to involution $\sigma_1 \leftrightarrow \sigma_3$, there exists a representative $\tilde{w}\in \tilde M_4$ of $w$ in one of the following families: 
\begin{itemize}
\item [$\mathcal{C}_0$:] $\{\varepsilon, \sigma_i$, \, $i=1,2,3\}$;
\item [$\mathcal{C}_{1}$:] $\{(\sigma_1\sigma_2)^{a}$, \,  $a \geq 1\}$;
\item [$\mathcal{C}_2$:] $\{(\sigma_1\sigma_2)^{a_1}(\sigma_3\sigma_2)^{a_2}\ldots(\sigma_3\sigma_2)^{a_{2k}}$, \, $a_i >0$, \, $k\geq 1\}$; 
\item [$\mathcal{C}_{3}$:] $\{\sigma_3(\sigma_1\sigma_2)^{a_1}(\sigma_3\sigma_2)^{a_2}\ldots(\sigma_3\sigma_2)^{a_{2k}}$, \, $a_i >0$, \, $k \geq 1\}$;
\item [$\mathcal{C}_{4}$:] $\{\sigma_3(\sigma_1\sigma_2)^{a_1}(\sigma_3\sigma_2)^{a_2}\ldots(\sigma_1\sigma_2)^{a_{2k+1}}$, \, $a_i >0$, \, $k \geq 0\}$;
\item [$\mathcal{C}_5$:] $\{\sigma_1\sigma_3 \, u \, \sigma_1\sigma_3 \, v$, \, $\sigma_1\sigma_3 \, u \, \sigma_3\sigma_1 \, v$, \, $u,v \in M_4^+\}$.
\end{itemize}
We say that $w$ (and $\tilde{w}$) belongs to the corresponding class $\mathcal{C}_i$ and write $w\in \mathcal{C}_i$, for $1\leq i\leq 5$. 
\end{proposition}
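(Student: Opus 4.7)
For a \emph{positive} word, the only nontrivial condition in Definition~\ref{defstronglyred} is positive square-freeness, so my first step is to extract its local cyclic consequences in $\tilde M_4^+$: the obvious $\sigma_i\sigma_i$ is forbidden for each $i$, and more globally $\sigma_1\sigma_3^{k}\sigma_1$ and $\sigma_3\sigma_1^{k}\sigma_3$ are forbidden for every $k\geq 0$. The key combinatorial tool will be the following \emph{slot lemma}: between any two cyclically consecutive occurrences of $\sigma_2$ in $\tilde w$, the intermediate word is a nonempty element of $\{\sigma_1,\sigma_3\}^{\ast}$ of length $1$ or $2$, and a length-$2$ slot must be $\sigma_1\sigma_3$ or $\sigma_3\sigma_1$. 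The upper bound follows because any length-$\geq 3$ word in $\{\sigma_1,\sigma_3\}$ contains either $\sigma_i\sigma_i$ or one of $\sigma_1\sigma_3\sigma_1$, $\sigma_3\sigma_1\sigma_3$, all forbidden.

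I will then stratify by the set of generators actually appearing in $\tilde w$. If at most one generator appears, square-freeness gives $\tilde w\in\mathcal C_0$. If exactly two appear, the involution $\sigma_1\leftrightarrow\sigma_3$ reduces to $\{\sigma_1,\sigma_2\}$ or $\{\sigma_1,\sigma_3\}$: the first forces cyclic alternation and hence $\tilde w=(\sigma_1\sigma_2)^a\in\mathcal C_1$; the second forces at most one occurrence each of $\sigma_1$ and $\sigma_3$ by $\sigma_1\sigma_3^k\sigma_1$-freeness, giving $\mathcal C_0$ up to the marginal $\tilde w=\sigma_1\sigma_3$, whose closure is disconnected and which I will handle by direct inspection.

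The main case is when all three generators appear. Let $p$ denote the number of cyclic $\sigma_1\sigma_3$ or $\sigma_3\sigma_1$ adjacencies in $\tilde w$; by the slot lemma $p$ is precisely the number of length-$2$ slots, and the remaining slots carry a single letter of $\{\sigma_1,\sigma_3\}$. When $p=0$, cyclically $\tilde w=\sigma_2 x_1\sigma_2\cdots\sigma_2 x_m$ with $x_i\in\{\sigma_1,\sigma_3\}$; maximal cyclic blocks of equal $x_i$'s alternate between two letters and hence come in an even number $2k$, and rotating to begin at a $\sigma_1$-block produces a representative in $\mathcal C_2$. When $p\geq 2$, I select two length-$2$ slots, cyclically rotate to place one at the front, and (after applying the involution if both are of type $\sigma_3\sigma_1$) obtain $\sigma_1\sigma_3\,u\,\sigma_1\sigma_3\,v$ or $\sigma_1\sigma_3\,u\,\sigma_3\sigma_1\,v$, i.e.\ $\tilde w\in\mathcal C_5$.

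The hard part will be $p=1$. After applying the involution if necessary, I may assume the unique length-$2$ slot is $\sigma_3\sigma_1$, and cyclic rotation places its $\sigma_3$ as the first letter of the linear representative, yielding $\tilde w=\sigma_3\sigma_1\sigma_2 y_1\sigma_2\cdots y_{N-1}\sigma_2$ with each $y_i\in\{\sigma_1,\sigma_3\}$. I then need to verify that this residue partitions uniquely into alternating $(\sigma_1\sigma_2)$- and $(\sigma_3\sigma_2)$-blocks (absorbing the initial $\sigma_1\sigma_2$ into the first block), and to match the parity of the total number of blocks to the statement: an even count $2k$ ending in a $(\sigma_3\sigma_2)$-block places $\tilde w\in\mathcal C_3$, while an odd count $2k+1$ ending in a $(\sigma_1\sigma_2)$-block places $\tilde w\in\mathcal C_4$. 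Since the normalization has fixed all cyclic and involutional freedom, this parity is an intrinsic invariant of $\tilde w$ and the classification is complete.
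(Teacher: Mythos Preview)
Your approach is essentially the same as the paper's: both organize the classification by counting the number of cyclic $\sigma_1\sigma_3$ or $\sigma_3\sigma_1$ adjacencies in $\tilde w$, with $0$ giving $\mathcal{C}_0\cup\mathcal{C}_1\cup\mathcal{C}_2$, exactly $1$ giving $\mathcal{C}_3\cup\mathcal{C}_4$, and at least $2$ giving $\mathcal{C}_5$. Your slot lemma and generator stratification supply details the paper leaves implicit, but the strategy is identical.
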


\begin{proof}
Since $w$ is strongly-reduced, $\tilde{w}$ contains no subwords $\sigma_1\sigma_3\sigma_1$. We start our classification from considering the number of $\sigma_1\sigma_3$ (or $\sigma_3 \sigma_1$) occurrences in $\tilde{w}$: if there is none of such occurrences, then we get families $\mathcal{C}_0, \mathcal{C}_1, \mathcal{C}_2$. Families $\mathcal{C}_3$ and $\mathcal{C}_4$ (resp. family $\mathcal{C}_5$) correspond to the cases when there is one (resp. at least two) of such occurrences.
\end{proof}

\begin{proposition}\label{c4toc3}
Given a braid diagram $w \in \mathcal{C}_4$ with $a_1>1$, there exists a braid diagram $w' \in \mathcal{C}_3$ so that their associated Lando graphs $G(w)$ and $G(w')$ are isomorphic. 
\end{proposition}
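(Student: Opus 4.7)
The plan is to construct $w'$ from $w$ by composing three operations, each of which preserves the Lando graph up to isomorphism, and then to verify that the resulting word lies in the $\mathcal{C}_3$ family. The operations are: (i) cyclic permutation of the chosen representative of $\tilde w$, which leaves the cyclic word (and hence $\varphi(\tilde w)$) unchanged; (ii) reversal $w \mapsto w^{rev}$, corresponding geometrically to flipping $\hat w$ upside down and reversing the cyclic orientation of each circle in the chord diagram; and (iii) the involution $\sigma_1 \leftrightarrow \sigma_3$ (fixing $\sigma_2$), corresponding to a horizontal reflection of the braid. Since (ii) and (iii) are rigid motions of the chord diagram $\varphi(\tilde w)$, the chord alternation relation---and therefore the Lando graph as an abstract graph---is preserved.

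Applying reversal followed by the involution to $w = \sigma_3 (\sigma_1\sigma_2)^{a_1} (\sigma_3\sigma_2)^{a_2} \cdots (\sigma_1\sigma_2)^{a_{2k+1}}$ produces the word
\[
R = (\sigma_2\sigma_3)^{a_{2k+1}} (\sigma_2\sigma_1)^{a_{2k}} (\sigma_2\sigma_3)^{a_{2k-1}} \cdots (\sigma_2\sigma_1)^{a_2} (\sigma_2\sigma_3)^{a_1} \sigma_1.
\]
I then cyclically rotate $R$ so that its final $\sigma_3$ (the last letter of the block $(\sigma_2\sigma_3)^{a_1}$) moves into position $1$; the next letter is the terminal $\sigma_1$ of $R$, after which one reads the original prefix of $R$ in order. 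Because $a_1 \geq 2$, the rotation cuts off a proper suffix of the last $(\sigma_2\sigma_3)$-block, so the trailing portion of the rotated word reads $(\sigma_2\sigma_3)^{a_1-1}\sigma_2$. Regrouping consecutive letters into pairs starting from position $2$ yields
\[
w' = \sigma_3 (\sigma_1\sigma_2)^1 (\sigma_3\sigma_2)^{a_{2k+1}} (\sigma_1\sigma_2)^{a_{2k}} \cdots (\sigma_1\sigma_2)^{a_2} (\sigma_3\sigma_2)^{a_1 - 1},
\]
which is in the $\mathcal{C}_3$ form with $2(k+1)$ blocks and parameters $(b_1,\ldots,b_{2k+2}) = (1, a_{2k+1}, a_{2k}, \ldots, a_2, a_1-1)$, all strictly positive because each $a_i > 0$ and $a_1 > 1$. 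For $k=0$ the formula specialises to $w' = \sigma_3(\sigma_1\sigma_2)(\sigma_3\sigma_2)^{a_1-1}$, still in $\mathcal{C}_3$.

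Since operations (i)-(iii) preserve the Lando graph up to isomorphism, I conclude $G(w) \cong G(w')$ with $w' \in \mathcal{C}_3$. The main obstacle I expect is the bookkeeping in the regrouping step: after the cyclic shift, one must check that the junctions between the former $(\sigma_2\sigma_3)^{a_i}$ and $(\sigma_2\sigma_1)^{a_{i-1}}$ blocks translate into exactly the alternating pattern of $(\sigma_1\sigma_2)$- and $(\sigma_3\sigma_2)$-blocks demanded by $\mathcal{C}_3$. The hypothesis $a_1 > 1$ enters precisely at the end, guaranteeing that the trailing parameter $b_{2k+2} = a_1-1$ is positive so that $w'$ really lands in $\mathcal{C}_3$ rather than collapsing to a word with a degenerate final block.
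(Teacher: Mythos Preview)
Your proof is correct and follows essentially the same approach as the paper: both arguments use the three Lando-graph-preserving operations (cyclic permutation, reading the word backwards, and the involution $\sigma_1\leftrightarrow\sigma_3$) and arrive at the identical word $w'=\sigma_3(\sigma_1\sigma_2)(\sigma_3\sigma_2)^{a_{2k+1}}(\sigma_1\sigma_2)^{a_{2k}}\cdots(\sigma_1\sigma_2)^{a_2}(\sigma_3\sigma_2)^{a_1-1}$. The only cosmetic difference is the order in which the three operations are applied (you do reversal, involution, then cyclic; the paper does cyclic, reversal, then involution), and you supply more detail on the regrouping bookkeeping than the paper does.
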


\begin{proof}
It is clear that involution $\sigma_1 \leftrightarrow \sigma_3$, cyclic permutation and reading a word backwards preserve Lando graph of a braid diagram. The following chain of transformations completes the proof: 
$$w= \sigma_3(\sigma_1\sigma_2)^{a_1}(\sigma_3\sigma_2)^{a_2}...(\sigma_1\sigma_2)^{a_{2k+1}}\stackrel{cyclic}{\longrightarrow} $$
$$(\sigma_2\sigma_1)^{a_1-1}(\sigma_2\sigma_3)^{a_2}...(\sigma_2\sigma_1)^{a_{2k+1}} (\sigma_2\sigma_3)\sigma_1\stackrel{backward}{\longrightarrow}$$
$$ \sigma_1(\sigma_3\sigma_2)(\sigma_1\sigma_2)^{a_{2k+1}}...(\sigma_1\sigma_2)^{a_1-1} \stackrel{\sigma_1 \leftrightarrow  \sigma_3}{\longrightarrow}$$
$$\sigma_3(\sigma_1\sigma_2)(\sigma_3\sigma_2)^{a_{2k+1}}...(\sigma_3\sigma_2)^{a_1-1} = w'.$$
\end{proof}

Observe that the statement in Proposition \ref{c4toc3} is true for any strongly-reduced braid diagram, not necessarily positive. More precisely, given a braid diagram $w$ such that $w^+ \in \mathcal{C}_4$ the proof above produces a braid $w'$ such that $w'^+ \in \mathcal{C}_3$. This will be useful in Section \ref{sectnegative}.

\begin{proposition}\label{Lando4pos}
Let $w \in M_4^+$ be a strongly-reduced positive braid diagram. Then:
\begin{itemize}
\item[(a)] If $w \in \mathcal{C}_0$, then $G(w)$ is either empty, if $w$ is the trivial word, or a single vertex otherwise. 
\item[(b)] If $w \in \mathcal{C}_1$, then $G(w)$ is ether a polygon of $2a$ edges if $a>1$, or two isolated vertices, if $a=1$. 
\item[(c)] If $w \in \mathcal{C}_2$, then $G(w)$ is a disjoint union of paths. Namely, $$G(w)= L_{2(a_1-1)} \sqcup L_{2(a_2-1)} \sqcup \ldots \sqcup L_{2(a_{2k}-1)}.$$
\item[(d)] If $w \in \mathcal{C}_3$, then $G(w)$ is a rhomboid graph (Definition~\ref{nosimpledef}). In particular, $$G(w) =
\left\{
  \begin{array}{lll}
     G(0,2(a_1-1),2a_2, \ldots,2a_{2k},0) & & \mbox{if } a_1>1, \\
G(2a_2,0,2a_3,\ldots,2a_{2k},0) & & \mbox{if } a_1=1.
  \end{array}
  \right.  $$
\item[(e)] If $w \in \mathcal{C}_4$, then $G(w)$ is a rhomboid graph. In particular, $$G(w) =
\left\{
  \begin{array}{lll}
     G(0,2(a_1-1),2a_2,\ldots,,2a_{2k},0,2a_{2k+1}) & & \mbox{if } a_1>1, \, k>0 \\
      G(2a_2,0,2a_3,\ldots,2a_{2k},0,2a_{2k+1}) & & \mbox{if } a_1=1, \, k>0,\\
     \{v\} \sqcup P_{2a_1} & & \mbox{if } k=0. 
  \end{array}
  \right.  $$
\item[(f)] If $w \in \mathcal{C}_5$, then $G(w) = G(D_1) \sqcup G(D_2)$, where $D_1$ and $D_2$ are $2$-bridge diagrams. 
\end{itemize}
\end{proposition}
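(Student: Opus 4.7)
The plan is to compute $G(w)$ directly for each family, using the fact that for positive braid words $\varphi(\sigma_i) = e_i \in M^{TL}_4$; hence $\varphi(\tilde w)$ is the closure of a product of Temperley--Lieb generators decorated by one vertical chord per crossing. A chord gives a vertex of $G(w)$ precisely when its two endpoints lie on a common circle of $\varphi(\tilde w)$, and two such vertices are adjacent iff the corresponding chords interleave along that circle. Both the circle decomposition and the interleaving pattern can be read directly off the Temperley--Lieb word.

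Parts (a)--(c) go by direct inspection. For $\mathcal{C}_0$ the empty word yields four disjoint circles with no chords, while a single $\sigma_i$ produces three circles with its unique chord returning to one of them. For $\mathcal{C}_1$, strands $3,4$ form a separate trivial circle, while on strands $1,2,3$ the closure of $(e_1 e_2)^a$ gives one large circle carrying all $2a$ chords; in the induced cyclic order consecutive chords interleave, and once $a \geq 2$ the first and last chords interleave as well, producing the $2a$-gon. The case $a=1$ is exceptional: the two chords nest rather than interleave, yielding two isolated vertices. For $\mathcal{C}_2$, at each interface between a $(\sigma_1\sigma_2)$-block and a $(\sigma_3\sigma_2)$-block the set of active strands changes, and exactly one chord of each block has its endpoints on distinct circles and so contributes no vertex; the remaining $2a_i-1$ chords of the $i$-th block form a path $L_{2(a_i-1)}$, and these paths are mutually disjoint in $G(w)$.

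For (d) and (e), the rhomboid structure arises because the initial lone $\sigma_3$ merges what would otherwise be a separate circle into the main circle, so that all surviving chords now lie on a single large circle. The two extremal chords of the first $(\sigma_1\sigma_2)^{a_1}$-block then interleave with every chord of every subsequent block and play the role of the boundary vertices $b_\ell, b_r$, while the chords of the remaining $2k$ blocks form the spine, with one spoke vertex appearing at each block boundary. The subdivision parameters along the spine record the number of interior chords between consecutive spokes, giving $2a_i$ for interior blocks and $2(a_1-1)$ for the first block whose extremal chords have been promoted to $b_\ell, b_r$. The degenerate case $a_1=1$ collapses $v_{-1}'$ with $v_0$, producing the alternative form in the statement. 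Case (e) is the same except for a trailing $(\sigma_1\sigma_2)^{a_{2k+1}}$-block, which attaches a further $2a_{2k+1}$-piece path beyond the last spoke; when $k=0$ there are no block transitions, so the initial $\sigma_3$ chord sits on its own circle as an isolated vertex while $(\sigma_1\sigma_2)^{a_1}$ closes into the cycle $P_{2a_1}$ as in $\mathcal{C}_1$.

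Finally, for $\mathcal{C}_5$ the key observation is that $e_1 e_3$ (or $e_3 e_1$) contains a pair of parallel caps that completely sever the closed diagram $\varphi(\tilde w)$ along both strand pairs $(1,2)$ and $(3,4)$; two such occurrences therefore partition the annular diagram into two tangle pieces which are the closures of the respective subwords $u, v$, each producing a $2$-bridge diagram $D_1, D_2$ with independent Lando graphs. The main obstacle will be the careful bookkeeping of chord endpoints around the single large circle in the rhomboid cases (d) and (e): one must verify that exactly one chord per block lies on an unmatched circle, that the extremal chords of the first block interleave with all others in precisely the $b_\ell, b_r$-pattern of the rhomboid graph, and that the cyclic wraparound back to the initial $\sigma_3$ contributes no extra edges to $G(w)$ beyond those encoded in the rhomboid structure.
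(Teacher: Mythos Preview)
Your overall strategy---direct inspection of the chord diagram $\varphi(\tilde w)$---is the same as the paper's, and parts (a), (c), and (f) are essentially right. In (b) there is a minor slip: it is strand $4$ alone, not ``strands $3,4$'', that forms the extra trivial circle; the paper phrases this as ``disjoint union of the chord diagram of the torus link $T(3,a)$ and a circle''.

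The real problem is in (d)--(e). You identify $b_\ell,b_r$ as ``the two extremal chords of the first $(\sigma_1\sigma_2)^{a_1}$-block'' and assert that these ``interleave with every chord of every subsequent block''. Both claims are wrong. The paper identifies $b_\ell,b_r$ as the chords coming from the subword $\sigma_3\sigma_1$---that is, the lone leading $\sigma_3$ together with the first $\sigma_1$ of the first block. Because $e_3e_1$ produces two parallel horizontal caps, these two chords have endpoints that straddle the entire circle; they interleave precisely with the chords at block boundaries (the spoke vertices, which the paper says are the \emph{last} $\sigma_2$'s of each factor $(\sigma_i\sigma_2)^{a_j}$), not with all chords. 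In the rhomboid graph $b_\ell,b_r$ are adjacent only to the spoke vertices $v_0,\dots,v_n$ and the four exceptional vertices $v_{-1},v_{-1}',v_{n+1},v_{n+1}'$, never to the subdivision vertices between spokes. Your description would force $b_\ell,b_r$ to be adjacent to every other vertex, which is not the rhomboid structure of Definition~\ref{nosimpledef}. Your account of the $a_1=1$ degeneration is likewise off: the paper explains that when $a_1=1$ the $\sigma_2$ in $\sigma_3\sigma_1\sigma_2$ becomes adjacent to $b_r$ rather than $b_\ell$, which is why $c_0=2(a_1-1)$ is replaced by $c_0'=2a_2$; this is not a collapse of $v_{-1}'$ with $v_0$. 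The ``careful bookkeeping'' you flag at the end is exactly where the argument currently fails.
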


\begin{proof}
Proof follows from direct analysis of the chord diagram $\varphi(\tilde{w})$ in each of the cases. 

Note that if $w \in \mathcal{C}_1$ then $\varphi(\tilde{w})$ corresponds to the disjoint union of the chord diagram of the torus link $T(3,a)$ and a circle, thus part (b) holds by \cite[Corollary 7.7]{JACO}. 

For part (c) observe that if $w \in \mathcal{C}_2$, then $\varphi(\tilde{w})$ consists of two circles. Each factor $(\sigma_i\sigma_2)^{a_j}$ gives rise to $2a_j$ chords (one of them non-admissible) yielding to a component $L_{2(a_j-1)}$ in $G(w)$, for $i=1,3$ and $1 \leq j \leq 2k$. 

Parts (d) and (e) are illustrated in Figures \ref{casec3a1big}-\ref{casec44a1big}. In these cases, $\varphi(\tilde{w})$ contains a single circle, vertices $b_r$ and $b_\ell$ correspond to chords associated to subword $\sigma_3\sigma_1$. Spoke vertices correspond to chords associated to the last letter of factors of the form $(\sigma_i\sigma_2)^{a_j}$ in $\tilde{w}$, for $i=1,3$. Also, if $a_1=1$ then the vertex corresponding to the $\sigma_2$ occurrence in $\sigma_3\sigma_1\sigma_2$ is connected to $b_r$ instead of $b_\ell$, so $c_0 = 2(a_1-1)$ becomes $c_0'=2a_2$, as shown in Figure \ref{casec3a1eq1}.

\begin{figure}
    \centering
\includegraphics[width = 12cm]{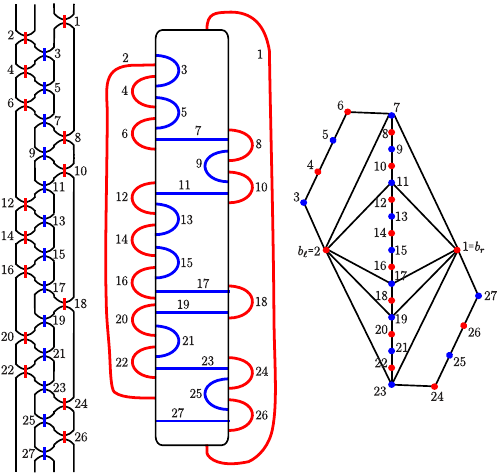}
\caption{\small{The chord diagram and Lando graph associated to $w= \sigma_3 (\sigma_1 \sigma_2)^3(\sigma_3 \sigma_2)^2(\sigma_1 \sigma_2)^3(\sigma_3 \sigma_2)(\sigma_1 \sigma_2)^2(\sigma_3 \sigma_2)^2$.}}\label{casec3a1big}
\end{figure}

\begin{figure}
    \centering
\includegraphics[width = 12cm]{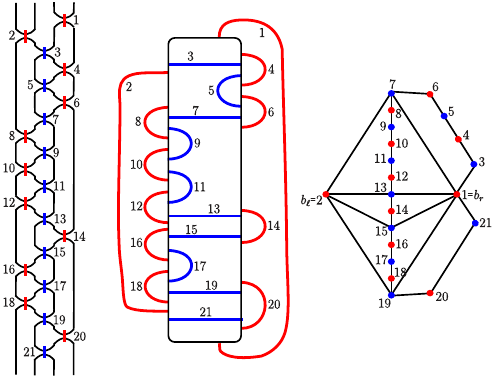}
\caption{\small{The chord diagram and Lando graph associated to $w= \sigma_3 (\sigma_1 \sigma_2)(\sigma_3 \sigma_2)^2(\sigma_1 \sigma_2)^3(\sigma_3 \sigma_2)(\sigma_1 \sigma_2)^2(\sigma_3 \sigma_2)$.}}\label{casec3a1eq1}
\end{figure}

\begin{figure}
    \centering
\includegraphics[width = 12cm]{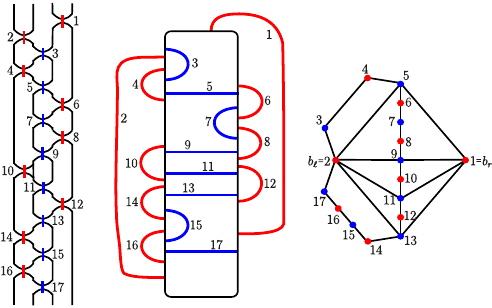}
\caption{\small{The chord diagram and Lando graph associated to $w= \sigma_3 (\sigma_1 \sigma_2)^2(\sigma_3 \sigma_2)^2(\sigma_1 \sigma_2)(\sigma_3 \sigma_2)(\sigma_1 \sigma_2)^2$.}}\label{casec44a1big}
\end{figure}

For part (f), observe that after performing a $B$-smoothing in the four crossings associated to two of the sets of occurrences $\sigma_1\sigma_3$ (or $\sigma_3\sigma_1$) in $\tilde{w}$, we obtain a disjoint sum of two $2$-bridge diagrams, together with four non-admissible chords.
\end{proof}

We finish this section with the proof of Corollary \ref{indepeasy}.

\begin{proof}[Proof of Corollary \ref{indepeasy}]
The first step is to apply Corollary~\ref{Reduction} to $w$ to obtain $w^{red}$. If $w^{red}$ belongs to families $\mathcal{C}_0 -  \mathcal{C}_4$ then Proposition \ref{Koz} and \ref{rhomboid2spheres} complete the proof. 

If $w^{red} \in \mathcal{C}_5$, we have $I(w) = I(G(D_1)) * I(G(D_2))$, with $D_1$ and $D_2$ two $2$-bridge diagrams (see Proposition \ref{Lando4pos}(f)). These diagrams represent alternating links, and therefore $I(G(D_i))$ is either contractible or homotopy equivalent to a sphere (see \cite{GeomDed}), for $i=1,2$.
\end{proof}

\section{Considering negative generators}\label{sectnegative}

In this section we assume all braid diagrams to be strongly-reduced, i.e., $w \in M_4^{red}$. We write $w \dot{\in} \mathcal{C}_i$ if $w \in M_4^{red}$ and $w^+\in \mathcal{C}_i$, for $0\leq i \leq 5$. 

\begin{proposition}\label{easycases}
Let $w\in M_4^{red}$. \begin{itemize}
\item[(a)] If $w \dot{\in} \mathcal{C}_0$, then $I(w)$ is homotopy equivalent to either $S^{-1} = \emptyset$ (if $w^+ = \varepsilon$) or $S^0$ (if $w^+= \sigma_i$ and $\sigma_i^{-1} \in w$), or it is contractible otherwise.
\item[(b)] If $w\dot{\in}  \mathcal{C}_1$, then $I(w)$ is either contractible or homotopy equivalent to either one sphere or the wedge of two spheres of the same dimension.
\item[(c)] If $w\dot{\in}  \mathcal{C}_2$, then $I(w)$ is either contractible or homotopy equivalent to a sphere.
\item[(d)] If $w\dot{\in} \mathcal{C}_5$, then $I(w)$ is either contractible or homotopy equivalent to a sphere. 
\end{itemize} 
Moreover, the complexity of computing the homotopy type of $I(w)$ is polynomial. 
\end{proposition}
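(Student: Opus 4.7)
The plan is to handle the four cases separately, in each case starting from the description of $G(w^+)$ given by Proposition~\ref{Lando4pos} and then analyzing how the negative letters of $w$ extend it. Each negative letter $\sigma_j^{-1}$ produces a horizontal chord in $\varphi(\tilde w)$; it becomes a vertex of $G(w)$ only if both endpoints lie on the same circle of $\varphi(\tilde w^+)$, and its neighbors in $G(w)$ are exactly the chords alternating with it along that circle. Throughout, the strong-reduction hypotheses on $w$ will be used to restrict both the admissibility and the adjacency pattern of these new vertices.

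For case (a), either $w^+=\varepsilon$, in which case every $B$-smoothing is vertical and the closure is a disjoint union of four circles so that no horizontal chord is admissible, giving $G(w)=\emptyset$ and $I(w)\sim_h S^{-1}$; or $w^+=\sigma_i$, in which case the unique horizontal smoothing merges strands $i$ and $i+1$ into one circle, and only letters $\sigma_i^{-1}$ can contribute an admissible chord. If no $\sigma_i^{-1}$ occurs, $G(w)$ is a single vertex and $I(w)$ is contractible. Otherwise, $R_2$-reduction forces the vertex $v$ coming from $\sigma_i$ to dominate every vertex coming from a $\sigma_i^{-1}$, so Lemma~\ref{domlemma} reduces $G(w)$ to a single edge and $I(w)\sim_h S^{0}$.

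For cases (b) and (c), $G(w^+)$ is, respectively, a $2a$-cycle $P_{2a}$ (or two isolated vertices when $a=1$) or a disjoint union of paths $L_{2(a_j-1)}$. In $\mathcal{C}_1$ strand $4$ is free so $\sigma_3^{-1}$ never contributes an admissible chord, and in $\mathcal{C}_2$ the two-circle structure of $\varphi(\tilde w^+)$ similarly rules out most negative chords. For each remaining admissible negative chord I will verify, case by case on its neighboring positive letters and using $R_2$- and nesting-freeness, that the new vertex is either a leaf or dominated by an adjacent vertex of $G(w^+)$; iterated use of Corollary~\ref{leaf} and Lemma~\ref{domlemma} then reduces $G(w)$ to an induced subgraph of $G(w^+)$. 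Proposition~\ref{Koz}(3) yields for (b) that $I(w)$ is contractible, a sphere, or the wedge of two equal-dimensional spheres, while Proposition~\ref{Koz}(1) together with the join formula yields for (c) that $I(w)$ is contractible or a single sphere.

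For case (d), $\varphi(\tilde w^+)$ is a disjoint union $D_1\sqcup D_2$ of chord diagrams of two $2$-bridge diagrams (Proposition~\ref{Lando4pos}(f)), built from disjoint sets of strands; hence every admissible negative chord is contained in one of the $D_i$ and $G(w)=G_1\sqcup G_2$ with each $G_i$ the Lando graph of an alternating link diagram. By Remark~\ref{loop}, $I(w)=I(G_1)\ast I(G_2)$, and by \cite{GeomDed} each $I(G_i)$ is contractible or a single sphere, so the join is again contractible or a sphere. The main obstacle I expect is the case-by-case verification in (b) and (c) that strong-reduction really forces every admissible negative-letter vertex to be a leaf or a dominated vertex, as this requires tracking interactions between non-local letters in the cyclic word; once this is granted, every reduction step runs in polynomial time and the number of reductions is linear in $|w|$, giving the polynomial-time algorithm asserted in the last sentence of the proposition.
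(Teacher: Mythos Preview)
Your overall strategy matches the paper's: start from $G(w^+)$ as described in Proposition~\ref{Lando4pos}, analyze how negative letters extend it, and reduce via Corollary~\ref{leaf} and Lemma~\ref{domlemma} to something covered by Proposition~\ref{Koz} or the alternating-link result in \cite{GeomDed}. The paper's proof is equally terse, so you are on the right track.

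Two points where your wording would lead you astray if taken literally. First, in (b) and (c) you claim the reductions land on ``an induced subgraph of $G(w^+)$''. But Lemma~\ref{domlemma} removes the \emph{dominating} vertex, not the dominated one; if a polygon vertex $v$ dominates the new vertex $u$, you delete $v$, and $u$ is still there. What the paper actually says is that each admissible negative letter yields ``either a leaf or a square sharing a side'' with the polygon or path (see Figure~\ref{braid3}); the square case is handled by Domination Lemma plus a further leaf reduction, and what one obtains is $\Sigma^k I(H)$ for an induced subgraph $H$, not $I(H)$ itself. This does not change the conclusion, but your phrasing hides the suspensions. Second, in (d) the two $2$-bridge pieces $D_1,D_2$ are \emph{not} ``built from disjoint sets of strands'' --- they arise because smoothing the two $\sigma_1\sigma_3$ blocks disconnects the chord diagram. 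The paper's proof explicitly notes that $\sigma_2^{-1}$ occurring \emph{between} the letters of a $\sigma_1\sigma_3$ block gives a non-admissible chord and must be discarded first; only then does every remaining admissible negative chord fall into one of the $D_i$. Your argument needs this step.
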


\begin{proof}
Part $(a)$ is trivial. For part (b), recall that $G(w^+)$ corresponds to a $2a$-gon. $\sigma_1^{-1}$ and $\sigma_2^{-1}$ occurrences in $w$ yields to either a leaf or a square sharing a side with the $2a$-polygon corresponding to $w^+$ (see Figure \ref{braid3}). In both cases, the result holds by applying Domination Lemma together with Proposition \ref{Koz}. 

For part (c), notice that vertices arising from negative letters in $w$ do not connect the $2k$ paths in $G(w^+)$. As in the previous case, $\sigma_i^{-1}$ occurrences giving rise to admissible chords yields to  either leaves or squares sharing a side with one of the paths in $G(w^+)$, for $i=1,2,3$. Domination Lemma and Proposition \ref{Koz}(1) complete the proof.

For part (d) observe that negative $\sigma_2^{-1}$ occurrences between the two letters in $\sigma_1\sigma_3$ lead to non-admissible chords and therefore associated letters can be eliminated from $w$ to obtain a new braid diagram $w'$ with $I(w) = I(w')$. Thus the statement follows from Proposition \ref{Lando4pos} and the proof of Corollary~\ref{indepeasy}, since the diagrams $D_1$ and $D_2$ associated to $w'$ are 2-bridge diagrams, and therefore represent alternating links. 
\end{proof}

Computing the homotopy type of the independence complex of braid diagrams in families $\mathcal{C}_3$ and $\mathcal{C}_4$ is more involved and related to each other. We address these cases in the next subsections.

\subsection{The case $w \dot{\in} \mathcal{C}_3$}\label{sectionc3}

In this section we consider those braid diagrams $w$ on four strands which are strongly-reduced and such that $w^+$ belongs to the family $\mathcal{C}_3$ from Lemma \ref{B4classification}, that is, 
\begin{equation}\label{winc3}
\tilde{w}^+ = \sigma_3(\sigma_1 \sigma_2)^{a_1} (\sigma_3 \sigma_2)^{a_2} \cdots (\sigma_3\sigma_2)^{a_{2k}}
\end{equation} 
for some positive integers $a_i$ (with $1 \leq i \leq 2k$) and $k\geq 1$. Recall that we denote the above conditions as $w \dot{\in} \mathcal{C}_3$. 

\begin{definition}\label{defheadtail} Let $w \dot{\in} \mathcal{C}_3$. The head of $w$, denoted $w_H$, is the unique subword of $\tilde{w}$ in the form $w_H=\sigma_2 w_1 \sigma_3 w_2 \sigma_1 w_3 \sigma_2$, with $w_i \in M_4^-$, for $i=1,2,3$. The tail of $w$, $w_T$, is the subword of $\tilde{w}$ obtained by deleting the interior of the head $w_H$ (that is, following previous notation, we delete $w_1 \sigma_3 w_2 \sigma_1 w_3$).  
\end{definition}

\begin{proposition}\label{propoptions}
Let $w \dot{\in} \mathcal{C}_3$, with head $w_H = \sigma_2 w_1 \sigma_3 w_2 \sigma_1 w_3 \sigma_2$, where $w_i \in M_4^-$ for $i=1,2,3$. Then, strong reducibility of $w$ implies that the options for $w_2$ are $w_2= \emptyset$, $w_2= \sigma_2^{-1}$ or $w_2=\sigma_2^{-1}\sigma_1^{-1}\sigma_3^{-1}\sigma_2^{-1}$. Moreover: 
\begin{enumerate}
    \item[1)] If $w_2 = \emptyset$, then the options for $w_1$ and $w_3$ are: $$w_1= \emptyset, \quad w_1=\sigma_1^{-1}\sigma_2^{-1},  \quad w_1=\sigma_3^{-1}\sigma_2^{-1}, \quad    w_1=\sigma_1^{-1}\sigma_3^{-1}\sigma_2^{-1},$$ $$w_3= \emptyset, \quad  w_3=\sigma_2^{-1}\sigma_3^{-1}, \quad w_3=\sigma_2^{-1}\sigma_1^{-1}, \quad  w_3=\sigma_2^{-1}\sigma_1^{-1}\sigma_3^{-1}.$$ 
    
\item[2)] If $w_2= \sigma_2^{-1}$, then we have to consider, in addition to previous options for $w_1$ and $w_3$, the following: $$w_1=\sigma_1^{-1}, \quad w_1=\sigma_3^{-1}\sigma_2^{-1}\sigma_1^{-1}, \quad  w_3=\sigma_3^{-1}, \quad  w_3=\sigma_3^{-1}\sigma_2^{-1}\sigma_1^{-1}.$$

\item[3)] If $w_2=\sigma_2^{-1}\sigma_1^{-1}\sigma_3^{-1}\sigma_2^{-1}$, then the options for $w_1$ and $w_3$ are:
$$w_1=\emptyset, \quad w_1=\sigma_3^{-1}\sigma_2^{-1}, \quad w_3=\emptyset, \quad w_3=\sigma_2^{-1}\sigma_1^{-1}.$$
\end{enumerate}
\end{proposition}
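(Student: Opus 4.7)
The plan is a finite case analysis driven by the four strong-reducibility conditions from Definition~\ref{defstronglyred}, applied to the window $\sigma_2 w_1 \sigma_3 w_2 \sigma_1 w_3 \sigma_2$. I would first pin down $w_2$, and then for each of its three possibilities enumerate the compatible $w_1$ and $w_3$.

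For $w_2$, the local $R_2$-reducibility forbids starting with $\sigma_3^{-1}$ (the pair $\sigma_3\sigma_3^{-1}$ is unshielded, since no $\sigma_j^{\pm 1}$ with $|3-j|\leq 1$ can lie between them) and, symmetrically, ending with $\sigma_1^{-1}$ (because of the following $\sigma_1$). Together with the negative square-free condition (no $\sigma_i^{-2}$ and no $\sigma_i^{-1}\sigma_{i\pm1}^{-1}\sigma_i^{-1}$), and after checking how any internal $\sigma_1^{-1}$ or $\sigma_3^{-1}$ interacts through $R_2$ and nesting with the flanking positive letters of $w_H$, the possibilities whittle down to $w_2\in\{\emptyset,\ \sigma_2^{-1},\ \sigma_2^{-1}\sigma_1^{-1}\sigma_3^{-1}\sigma_2^{-1}\}$.

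For $w_1$ and $w_3$, the same local $R_2$ analysis yields: $w_1$ cannot start with $\sigma_2^{-1}$ or end with $\sigma_3^{-1}$, and $w_3$ cannot start with $\sigma_1^{-1}$ or end with $\sigma_2^{-1}$. Combined with negative square-freeness this produces a short list of candidate words. The three-case split is driven by two couplings with $w_2$. First, an ending $\sigma_1^{-1}$ in $w_1$ generates the subword $\sigma_1^{-1}\sigma_3 w_2 \sigma_1$; since the lone $\sigma_3$ does not shield the pair $\sigma_1^{-1}\cdots\sigma_1$, $R_2$-reduction forces $w_2$ to begin with $\sigma_2^{-1}$, which is exactly why $w_1\in\{\sigma_1^{-1},\ \sigma_3^{-1}\sigma_2^{-1}\sigma_1^{-1}\}$ appears only once $w_2\neq\emptyset$ (with the symmetric statement for $w_3$ beginning with $\sigma_3^{-1}$). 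Second, if $w_2$ itself contains $\sigma_1^{-1}$ (only in case~(3)), then any further $\sigma_1^{-1}$ in $w_1$ together with the $\sigma_1$ of $w_H$ would produce a subword $\sigma_1^{-1}\cdots\sigma_1^{-1}\cdots\sigma_1$ in which neither intermediate segment contains a positive $\sigma_1$ or $\sigma_2$, violating nesting-freeness (c); symmetrically $\sigma_3^{-1}$ is forbidden in $w_3$. This cuts case~(3) down to $w_1\in\{\emptyset,\sigma_3^{-1}\sigma_2^{-1}\}$ and $w_3\in\{\emptyset,\sigma_2^{-1}\sigma_1^{-1}\}$. Assembling these ingredients separately for each value of $w_2$ recovers precisely the three enumerations in the statement.

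\textbf{Main obstacle.} The only genuinely nontrivial step will be the nesting argument, since nesting-freeness is the unique strong-reducibility condition that couples all three windows $w_1$, $w_2$, $w_3$ through the farther positive letters of $w_H$. Case~(3) must therefore be handled jointly rather than by analysing each window in isolation. Once this coupling is isolated and the local start/end restrictions from $R_2$-reducibility are in place, the remaining enumeration is finite and mechanical.
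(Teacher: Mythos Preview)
Your approach is essentially the same as the paper's: a finite case analysis using the four strong-reducibility constraints to pin down first/last letters of $w_1,w_2,w_3$ and then enumerate. Your identification of the coupling via $R_2$ (why $w_1$ ending in $\sigma_1^{-1}$ forces $w_2\neq\emptyset$) and via nesting (why case~(3) forbids $\sigma_1^{-1}$ in $w_1$ and $\sigma_3^{-1}$ in $w_3$) matches the paper exactly.

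There is, however, one genuine gap in the treatment of $w_2$. Your stated constraints (cannot start with $\sigma_3^{-1}$, cannot end with $\sigma_1^{-1}$, negative square-free) do \emph{not} by themselves rule out, for instance, $w_2=\sigma_1^{-1}\sigma_2^{-1}$: in the resulting subword $\sigma_3\,\sigma_1^{-1}\sigma_2^{-1}\,\sigma_1$ the pair $\sigma_1^{-1}\cdots\sigma_1$ is shielded from $R_2$-reduction by the intervening $\sigma_2^{-1}$, and no nesting pattern is created since only one $\sigma_1^{-1}$ is present. The paper handles this not by forbidding such $w_2$ but by \emph{normalizing}: since replacing $\sigma_1^{-1}\sigma_3$ by $\sigma_3\sigma_1^{-1}$ preserves the Lando graph, an initial $\sigma_1^{-1}$ in $w_2$ may be commuted past $\sigma_3$ and absorbed into $w_1$ (and symmetrically a terminal $\sigma_3^{-1}$ in $w_2$ into $w_3$). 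After this normalization one may assume that $w_2$ is empty or begins and ends with $\sigma_2^{-1}$, and then the three listed options follow. You should make this commutation step explicit; without it the enumeration of $w_2$ cannot be justified from strong reducibility alone.
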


\begin{proof}
Given a word $w$, we write $w(j)$ for the $j^{th}$ letter in $w$ and denote by $n_i$ the length of $w_i$, that is, $w_i(n_i)$ is the last letter of $w_i$, for $i= 1,2,3$.

\begin{figure}
    \centering
\includegraphics[width = 11cm]{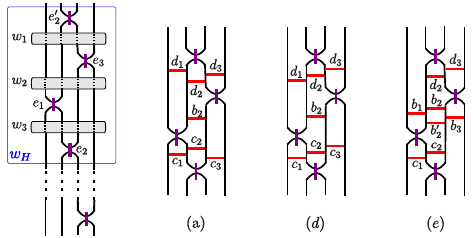}
\caption{\small{A scheme of the chord diagram of a braid $w \dot{\in} \mathcal{C}_3$ illustrating proof of Proposition \ref{propoptions}. The cases (a), (d) and (e) for maximal heads $w_H$ described after Remark \ref{remarkinvo} are shown.}} 
\label{head1}
\end{figure}

See Figure \ref{head1}. Since $w$ is strongly-reduced (and therefore nesting free and $R_2$-reduced) none of $w_i$ with $i=1,2,3$ contains two repeated letters, with the unique possible exception of $w_2=\sigma_2^{-1}\sigma_1^{-1}\sigma_3^{-1}\sigma_2^{-1}$. Furthermore, $w_2(1) \neq \sigma_3^{-1}$ and $w_2(n_2) \neq \sigma_1^{-1}$. Moreover, if $w_2(1) = \sigma_1^{-1}$ then we can redefine $w_1' = w_1 \cdot \sigma_1^{-1}$ and $w_2'=w_2(2) \cdots w_2(n_2)$, since replacing $\sigma_1^{-1}\sigma_3$ by $\sigma_3\sigma_1^{-1}$ preserves Lando graph; analogous reasoning works when $w_3(n_3)=\sigma_3^{-1}$. Hence, $w_2$ is either empty or $w_2(1) = w_2(n_2)= \sigma_2^{-1}$.

Strong reducibility of $w$ also implies that $w_1(1) \neq \sigma_2^{-1} \neq w_3(n_3)$, \, $w_1(n_1) \neq \sigma_3^{-1}$ and $w_3(1)\neq \sigma_1^{-1}$. If $w_2= \emptyset$ we have two additional restrictions $w_1(n_1) \neq \sigma_1^{-1}$ and $w_3(1) \neq \sigma_3^{-1}$. This completes the proof of (1) and (2).\\
For case (3) there are two additional restrictions coming from nesting free property of $w$: $\sigma_1^{-1}\notin w_1$ and $\sigma_3 \notin w_3$. 
\end{proof}

\begin{remark}\label{remarkinvo}
It is natural, by symmetry in Figure \ref{head1}, that each option for $w_1$ in Proposition \ref{propoptions} is associated by reversing order and involution $\sigma_1^{-1} \leftrightarrow \sigma_3^{-1}$ to one of the options for $w_3$.
\end{remark}

For simplicity, from now on we assume that $a_1>1$ when expressing $\tilde{w}^+$ as in relation \eqref{winc3}. When $a_1=1$ the Lando graphs to analyze are simpler (compare Figures \ref{casec3a1big} and \ref{casec3a1eq1}). Thus, we get similar results (with analogous proofs) for the case $a_1=1$.

In Proposition \ref{Lando4pos} we determined the Lando graph associated to positive (strongly-reduced) braid diagrams; in the particular case when $w^+\in \mathcal{C}_3$ we determined that $G(w^+)$ is a rhomboid graph. Now, we will study how such a rhomboid graph is modified by adding vertices (and edges) corresponding to negative letters in $w \dot{\in} \mathcal{C}_3$.

Notice that all possible combinations for the head $w_H$ listed in Proposition~\ref{propoptions} are subwords (not necessarily with consecutive letters) of one of the combinations listed below. We refer to them as {\it{maximal heads}} (see Figure~\ref{head1}). \\
(a) $w_1= \sigma_1^{-1} \sigma_3^{-1}\sigma_2^{-1}$, \quad  $w_2=\sigma_2^{-1}$, \quad  $w_3= \sigma_2^{-1} \sigma_1^{-1}\sigma_3^{-1}$. \\
(b) $w_1=\sigma_1^{-1} \sigma_3^{-1}\sigma_2^{-1}$, \quad $w_2=\sigma_2^{-1}$, \quad $w_3=\sigma_3^{-1} \sigma_2^{-1} \sigma_1^{-1}$. \\
(c) $w_1=\sigma_3^{-1} \sigma_2^{-1} \sigma_1^{-1}$, \quad $w_2=\sigma_2^{-1}$, \quad $w_3=\sigma_2^{-1} \sigma_1^{-1}\sigma_3^{-1}$. \\
(d) $w_1=\sigma_3^{-1} \sigma_2^{-1} \sigma_1^{-1}$, \quad $w_2=\sigma_2^{-1}$, \quad $w_3=\sigma_3^{-1} \sigma_2^{-1} \sigma_1^{-1}$. \\
(e) $w_1= \sigma_3^{-1} \sigma_2^{-1}$, \quad $w_2 = \sigma_2^{-1} \sigma_1^{-1}\sigma_3^{-1} \sigma_2^{-1}$, \quad $w_3=\sigma_2^{-1} \sigma_1^{-1}$. \\

Figure \ref{romboidshead} illustrates the Lando graphs of braid diagrams with maximal head and positive tail. Observe the graph (i) corresponding to maximal head (a) is an augmented rhomboid graph (Definition \ref{nosimpledef}). 

\begin{figure}
    \centering
\includegraphics[width = 11cm]{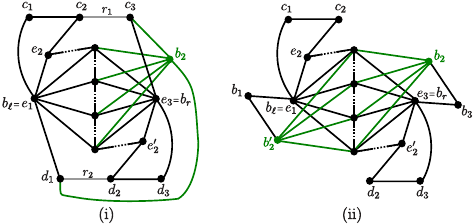}
\caption{\small{Lando graphs associated to braid diagrams with maximal heads in the case when $a_1>1$. Graph (i) corresponds to case (a). Cases (b) and (c) are obtained from graph (i) after removing edge $r_1$ and $r_2$, respectively. Removing $r_1$ and $r_2$ from $(i)$ corresponds to case (d). Graph (ii) corresponds to case (e).}} 
\label{romboidshead}
\end{figure}

\begin{proposition}\label{subwordofmaximalhead}
Let $w \dot{\in}\mathcal{C}_3$ with positive tail $w_T$. There exists a braid diagram $w' \dot{\in}\mathcal{C}_3$ satisfying that $w'_H$ is a maximal head extending $w_H$, $w_T = w_T'$ and $G(w)$ is an induced subgraph of $G(w')$ (i.e, an induced subgraph of one of the graphs depicted in Figure \ref{romboidshead}). Moreover, the homotopy type of $I(w)$ can be computed in polynomial time. 
\end{proposition}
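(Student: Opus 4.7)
The plan is to proceed in three stages, all based on the enumeration of possible heads already carried out in Proposition \ref{propoptions}.

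\textbf{Stage 1 (enlarging the head).} First I would check, by inspection of the list in Proposition \ref{propoptions}, that every admissible triple $(w_1,w_2,w_3)$ appearing in a strongly-reduced head $w_H = \sigma_2 w_1 \sigma_3 w_2 \sigma_1 w_3 \sigma_2$ is, letter by letter, a subword of exactly one of the five maximal heads (a)--(e) listed before Figure \ref{romboidshead}. The cases $w_2 = \emptyset$ and $w_2 = \sigma_2^{-1}$ cover the options in parts (1) and (2) and embed into (a)--(d); the case $w_2 = \sigma_2^{-1}\sigma_1^{-1}\sigma_3^{-1}\sigma_2^{-1}$ of part (3) embeds into (e). I would define $w'$ by inserting, in the correct slots of $w_H$, the negative letters that are missing from $w_H$ but present in the chosen maximal head, while leaving $w_T$ unchanged. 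Thus $w'_H$ is a maximal head extending $w_H$ and $w'_T = w_T$ by construction.

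\textbf{Stage 2 (induced subgraph).} Next I would compare $G(w)$ and $G(w')$. Since $w$ and $w'$ differ only by negative letters inserted inside the head, the underlying positive chord diagram $\varphi(\widetilde{w^+}) = \varphi(\widetilde{(w')^+})$ is the same, so $s_B\widehat{w}$ and $s_B\widehat{w'}$ live on the same circle and each inserted $\sigma_i^{-1}$ adds a horizontal chord whose admissibility is determined by that circle. Each such letter either contributes exactly one new vertex to the Lando graph (when its chord is admissible) or contributes nothing at all; in either case the vertices already present in $G(w)$ keep their endpoints, and hence their pairwise alternation pattern, unchanged. Therefore $G(w)$ is precisely the induced subgraph of $G(w')$ on the vertices arising from letters of $w$. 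Combined with Proposition \ref{Lando4pos}(d), which determines the positive skeleton of $G(w')$ as a rhomboid graph, this identifies $G(w')$ with the graph (i) of Figure \ref{romboidshead} for head (a), with (i) minus $r_1$, minus $r_2$, or minus both for heads (b), (c), (d) respectively (using Proposition \ref{trickremovingedge} to legitimize the edge deletions), and with graph (ii) for head (e).

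\textbf{Stage 3 (complexity).} Graph (i) is a (mod-)augmented rhomboid graph in the sense of Definitions \ref{nosimpledef} and \ref{modaugmented}, and graph (ii) is readily seen to be an induced subgraph of one (after absorbing the additional path via Csorba reduction, Proposition \ref{genCsorba}). Hence $G(w)$ is an induced subgraph of a (mod-)augmented rhomboid graph, and Corollary \ref{cormodaugmented4spheres} directly yields a polynomial-time algorithm for the homotopy type of $I(w) = I(G(w))$.

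The main obstacle is the bookkeeping in Stage 2: for each of the five maximal-head configurations (a)--(e) and each sub-option listed in Proposition \ref{propoptions}, one must verify on the chord diagram of Figure \ref{head1} that the erased negative letters correspond either to admissible chords, whose removal genuinely deletes a vertex from the induced subgraph, or to non-admissible chords, which contribute nothing in either $G(w)$ or $G(w')$, and that no spurious adjacency appears or disappears among the surviving vertices. This is a finite, case-by-case check but is the step where extra care is needed.
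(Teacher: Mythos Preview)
Your Stages 1 and 2 are essentially the paper's argument: the enumeration in Proposition \ref{propoptions} shows every admissible head is a subword of a maximal one, and adding the missing negative letters inserts vertices without disturbing existing adjacencies, so $G(w)$ is induced in $G(w')$.

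The gap is in Stage 3. Corollary \ref{cormodaugmented4spheres} is stated only for induced subgraphs of a (mod-)augmented \emph{simple} rhomboid graph, whereas graph (i) of Figure \ref{romboidshead} is an augmented rhomboid graph in the sense of Definition \ref{nosimpledef}, i.e.\ with the extended spine subdivided. Passing from the subdivided graph to a simple one is not a formality: after Csorba-reducing along the spine you are left with segment lengths in $\{0,1,2,\infty\}$, and the residue-$2$ case between consecutive spoke vertices (or between $e_2$ and $v_0$) cannot be handled by Csorba alone---it forces a domination argument that removes $b_\ell$ (or $b_\ell,b_r,b_2$) before you land in the simple situation. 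This reduction, together with the analysis of the ``upper'' and ``lower'' parts that produces the contractions/deletions of Definition \ref{modaugmented}, is the actual content of the paper's Proposition \ref{propfig17a}, and your sentence ``Corollary \ref{cormodaugmented4spheres} directly yields'' skips it.

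For graph (ii) your proposed route is also off: there is no length-$3$ path with internal degree-$2$ vertices to absorb via Csorba, so the claim that (ii) becomes an induced subgraph of an augmented rhomboid after a Csorba move is unsupported. The paper instead handles (ii) by domination (Proposition \ref{propfig17b}): $e_3>b_2$ and $e_1>b_2'$, so $I(G)\sim_h I(G-e_1-e_3)$, and the latter is an induced subgraph of an ordinary rhomboid graph, to which Proposition \ref{rhomboid2spheres} applies. You should replace your Stage 3 by these two reductions.
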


The first part of Proposition \ref{subwordofmaximalhead} follows from discussion above. The second part follows from the following propositions:

\begin{proposition}\label{propfig17b}
Let $G$ be an induced subgraph of the graph in Figure~\ref{romboidshead}(ii). Then $I(G)$ is either contractible or has the homotopy type of either a sphere or the wedge of two spheres, and this can be determined in polynomial time. 
\end{proposition}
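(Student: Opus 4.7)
The plan is to analyze induced subgraphs of Figure~\ref{romboidshead}(ii) in the same style used in Section~\ref{sectfan} for augmented rhomboids, while leveraging the fact that the maximal head (e) produces a structurally simpler attachment to the base rhomboid than the maximal heads (a)--(d). First I would describe Figure~\ref{romboidshead}(ii) explicitly: the positive part $w^+\in \mathcal{C}_3$ contributes a rhomboid graph (Definition~\ref{nosimpledef}) with spine $v_0,v_1,\dots,v_n$ and the ``bridge'' vertices $b_\ell, b_r$, while the negative letters of (e) add only a bounded number of extra vertices: two coming from $w_1=\sigma_3^{-1}\sigma_2^{-1}$ attached near one end of the spine, two from $w_3=\sigma_2^{-1}\sigma_1^{-1}$ near the other end, and four from $w_2=\sigma_2^{-1}\sigma_1^{-1}\sigma_3^{-1}\sigma_2^{-1}$ which form a path attached to the middle of the spine.

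The strategy is to show that the extra vertices from the head can always be absorbed by a combination of Domination Lemma (Lemma~\ref{domlemma}), Corollary~\ref{leaf}, and Csorba reduction (Proposition~\ref{genCsorba}), converting the full graph into a rhomboid graph (possibly after permitted subdivisions). Concretely, the outer negative vertices from $w_1$ and $w_3$ are adjacent to exactly one bridge vertex and one spine vertex, so they either form leaves (if the neighbors are absent in $G$) or are dominated by the bridge vertex; in both cases they can be removed without changing the homotopy type of the independence complex. The path of four vertices coming from $w_2$ attaches to two consecutive spine vertices and, after repeated use of Csorba reduction, is equivalent to simply re-subdividing an edge of the spine, which is again within the rhomboid framework.

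Then, for an arbitrary induced subgraph $G$ of Figure~\ref{romboidshead}(ii), I would case-split on which of the finitely many head vertices belong to $G$. Because this number is bounded independently of the tail, the number of cases is constant; each one reduces, as above, to an induced subgraph of some rhomboid graph $G(c_0',c_0,\dots,c_{n+1},c_{n+1}')$. Proposition~\ref{rhomboid2spheres} then yields that $I(G)$ is either contractible, a sphere, or a wedge of two spheres, and computable in polynomial time, as the reductions used are themselves polynomial-time operations (searching for dominations, leaves, or length-3 paths).

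The main obstacle will be verifying that no combination of negative vertices in (ii) produces a wedge of three or more spheres. Unlike cases (a)--(d), the word $w_2$ in case (e) contributes a length-four path rather than a triangle-like augmentation attached to a third bridge vertex, so one expects the third ``suspension direction'' that produced $S^2 \vee S^2 \vee S^2$ in Example~\ref{Example 3.17} to be absent. The delicate point is to check in each subcase that the Csorba/domination reductions do not get obstructed by an edge one cannot remove; here Proposition~\ref{trickremovingedge} provides a safety net for removing an offending edge while staying within the closed family of graphs reducible to rhomboids.
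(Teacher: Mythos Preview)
Your overall strategy---reduce to an induced subgraph of a rhomboid graph and then invoke Proposition~\ref{rhomboid2spheres}---is exactly the right target, and is what the paper does. However, your description of the graph in Figure~\ref{romboidshead}(ii) contains a structural error that would derail the argument. You claim that the four vertices coming from $w_2=\sigma_2^{-1}\sigma_1^{-1}\sigma_3^{-1}\sigma_2^{-1}$ ``form a path attached to the middle of the spine'' and can be absorbed by Csorba reduction into a spine subdivision. This is false: by Lemma~\ref{claim}(1), each $\sigma_2^{-1}$ in $w_2$ gives a vertex (call them $b_2$, $b_2'$) adjacent to \emph{every} spoke vertex, so these are high-degree bridge-type vertices, not degree-two path vertices. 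Csorba reduction does not apply to them, and they cannot be absorbed into the spine.

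The paper's proof exploits precisely this feature. Since $b_2$ and $b_2'$ are adjacent to all spokes, the original bridge vertices $e_1,e_3$ (coming from the positive $\sigma_1,\sigma_3$ in the head) dominate $b_2'$ and $b_2$ respectively in the sense of Lemma~\ref{domlemma}. One application of the Domination Lemma removes $e_1$ and $e_3$, and what remains is literally a rhomboid graph in which $b_2$ and $b_2'$ play the role of $b_\ell$ and $b_r$. Proposition~\ref{rhomboid2spheres} then finishes immediately. So the key move is not to absorb the $w_2$ vertices into the spine, but to recognize that they \emph{replace} the bridges; once you see this, there is no case analysis on head vertices at all, and no need for Proposition~\ref{trickremovingedge}.
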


\begin{proof}
We assume that $G$ contains vertices $e_1, e_3, b_2$ and $b_2'$ (otherwise the result follows easily). It is clear that $e_3$ and $e_1$ dominates $b_2$ and $b_2'$ respectively. Therefore, $I(w) \sim_h I(G(w)-e_1 -e_3)$. The graph $G(w) - e_1 - e_3$ is an induced subgraph of a rhomboid graph (with $b_2$ and $b_2'$ taking the role of vertices $b_r$ and $b_\ell$, respectively), and therefore Proposition \ref{rhomboid2spheres} completes the proof. 
\end{proof}

\begin{proposition}\label{propfig17a}
Let $G'$ be one of the graphs illustrated in Figure~\ref{romboidshead}(i), that is, the graph in black together with any subset of $\{r_1, r_2\}$, and let $G$ be an induced subgraph of $G'$. Then, $I(G)\sim_h \Sigma^k I(H)$, where $H$ is an induced subgraph of a mod-augmented simple rhomboid graph (Definition~\ref{modaugmented}). Moreover the algorithm to compute $H$ and $k$ from $G$ has polynomial complexity, and therefore the homotopy type of $I(G)$ can be computed in polynomial time. 
\end{proposition}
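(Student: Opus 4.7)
The plan is to reduce $G$, via a chain of polynomial-time simplifications, to an induced subgraph $H$ of a mod-augmented simple rhomboid graph while accumulating in the exponent $k$ one factor of $\Sigma$ per simplification of suspension type.

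First, I would preprocess $G$ by iterated application of Corollary \ref{leaf} (leaf collapse, each contributing a $\Sigma$) and Lemma \ref{domlemma} (domination, which may leave the graph unchanged or split off a suspension). After this cleanup one may assume $G$ has no leaves, no pair of vertices with one dominating the other whose elimination changes nothing, and no trivially contractible component.

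Second, I would dispose of the two special edges $r_1$ and $r_2$ depicted in Figure \ref{romboidshead}(i), which are what prevent $G'$ from being an augmented rhomboid graph in the sense of Definition \ref{nosimpledef}. The key tool is Proposition \ref{trickremovingedge}: I would take as $\mathcal G$ the family of all augmented rhomboid graphs (allowing arbitrary subdivisions along the extended spine), verify that $\mathcal G$ is closed under taking induced subgraphs, and check that subdividing $r_1$ (resp.\ $r_2$) into an $n$-path produces a graph still in $\mathcal G$ for every $n\geq 1$ (this is essentially a pictorial check from Figure \ref{romboidshead}). Proposition \ref{trickremovingedge} then lets us remove the edges $r_1$ and $r_2$ whenever both endpoints are present in $G$; if an endpoint is absent the edge is already gone. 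The result is an induced subgraph of an augmented rhomboid graph $G(c_0',c_0,\ldots,c_{n+1},c_{n+1}')$.

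Third, I would normalize the parameters $c_i$ of the extended spine by Csorba reduction (Proposition \ref{genCsorba}), reducing each to $\{0,1,2,\infty\}$ and recording the corresponding suspensions in $k$; then, following Example \ref{exammodifiedaug}, apply Domination Lemma to the remaining spine-end configurations so that each of the four spine-end edges is either contracted, deleted, or preserved in the standard form. By Definition \ref{modaugmented}, the resulting graph $H$ is an induced subgraph of a mod-augmented simple rhomboid graph, and the total reduction gives $I(G)\sim_h \Sigma^k I(H)$. Polynomial complexity is immediate since each step (finding a leaf, checking domination, contracting a degree-$2$ internal path, or removing $r_i$) runs in polynomial time and the number of steps is bounded linearly in the number of vertices of $G$.

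The main obstacle I anticipate is the second step: verifying that Proposition \ref{trickremovingedge} applies uniformly to $r_1$ and $r_2$ regardless of which subset of the apex vertices $\{b_\ell, b_r, b_2\}$, side vertices $\{c_i, d_i\}$, and extended-spine vertices is retained in $G$. This requires a careful but finite case analysis confirming that the subdivided graph is always realized as an induced subgraph of some augmented rhomboid graph, so that the family $\mathcal G$ chosen above is indeed closed under the relevant subdivisions.
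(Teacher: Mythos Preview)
Your second step misapplies Proposition~\ref{trickremovingedge}. That proposition is purely a statement about membership in a family $\mathcal G$ closed under induced subgraphs: if $G\in\mathcal G$ and subdividing $e$ stays in $\mathcal G$, then $G-e\in\mathcal G$. It says nothing about $I(G)$ versus $I(G-e)$, and in general these are not homotopy equivalent (even up to suspension). So you cannot ``dispose of'' $r_1,r_2$ this way while keeping track of the homotopy type. Moreover, to even invoke the proposition you would need $G'$ (the graph \emph{with} $r_1,r_2$) to lie in your family $\mathcal G$ of augmented rhomboid graphs, which it does not; your proposed check---that subdividing $r_1$ lands back in $\mathcal G$---presupposes exactly the membership you are trying to establish.

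The paper does not try to remove $r_1$ or $r_2$. Instead, the presence of $r_1$ is what makes $e_2$ dominate $c_1$; the Domination Lemma then removes $c_1$, and $r_1$ is absorbed into the path from $e_2$ to the first spoke vertex $v_0$. One then Csorba-reduces this path and branches on its length modulo~$3$: residue~$1$ gives the standard augmented form, residue~$0$ gives the contraction (a) in Definition~\ref{modaugmented}, residue~$\infty$ gives the deletion (c), and residue~$2$ forces a reassignment of roles ($b_\ell$ is removed by domination, and $b_2,c_3$ take over as the new $b_\ell,v_0$). Your step~3 also omits the separate branch where some \emph{interior} spine segment has length~$2$ after Csorba reduction: there $b_\ell,b_r,b_2$ all dominate the middle vertex and are removed, collapsing everything to a forest. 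Without these case distinctions (especially the residue-$2$ cases), the reduction to a mod-augmented simple rhomboid does not go through.
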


\begin{proof}
The algorithm to find $H$ uses Csorba reduction (Proposition~\ref{genCsorba}), Domination Lemma (Lemma~\ref{domlemma}) and application of Corollary \ref{leaf} whenever we get a graph containing a leaf. 

First step is applying Csorba reduction in the spine of $G$. If the distance between two consecutive spoke vertices $v_i$ and $v_{i+1}$ is $2$, then the vertex between them is dominated by $b_\ell$, $b_r$ and $b_2$ (in case they are in $G$). Therefore $I(G) \sim_h I(G-b_\ell-b_r-b_2)$, and this graph is a forest and (after possibly applying Csorba reductions) can be seen as an induced subgraph of an augmented simple rhomboid graph.

Thus, assume that the distance between two consecutive spoke vertices $v_i$ and $v_{i+1}$ is either $0, 1$ or $\infty$, in case they are not connected (recall Proposition~\ref{trickremovingedge}), for $1\leq i \leq n-1$. We keep the name $G$ for the graph obtained after Csorba reductions in the spine.

We focus now in the {\it{upper part}} of $G$ (that is, the part involving vertices $c_1,c_2,c_3,e_2,v_0$, in case they belong to $G$, and the vertices adjacent to them). The goal is to transform it into the upper part of an induced subgraph of a mod-augmented simple rhomboid graph (compare to Figure \ref{defrhomboids}(b), illustrating an augmented simple rhomboid graph). We assume without loss of generality that vertex $e_2$ and edge $r_1$ belong to $G$ and $c_1 \notin G$ (otherwise, apply Corollary \ref{leaf} and Domination Lemma over $e_2>c_1$ if needed). 

Next step is to apply Csorba reduction in the path connecting $e_2$ and the first spoke vertex $v_0$; we write $c_0$ for its distance: \\
$\cdot$ If $c_0 = 0$ $(mod \, 3)$, then after Csorba reduction vertices $v_0$ and $e_2$ are identified (this corresponds to transformation (a) in Definition \ref{modaugmented}). \\
$\cdot$ If $c_0 = 2$ $(mod \, 3)$, write $v$ for the vertex between $e_2$ and $v_0$. It is clear that vertex $b_\ell$ dominates $v$ (if $b_\ell$ belongs to $G$), so $I(G) \sim_h I(G-b_\ell)$. Then the upper part of $G-b_\ell$ can be seen (possibly after applying Csorba reduction on $c_2-e_2-v-v_0$) as the upper part of an induced subgraph of a simple rhomboid graph, where $b_2$ and $c_3$ take the role of $b_\ell$ and $v_0$, respectively. \\
$\cdot$ If $c_0 = \infty$, then the upper part of $G$ coincides with the upper part of the induced subgraph of an augmented simple rhomboid graph where the edge connecting $v_{-1}$ and $v_0$ has been removed (transformation (c) in Definition~\ref{modaugmented}). \\
$\cdot$ If $c_0 = 1$ $(mod \, 3)$, then just apply Csorba reduction a finite number of times until $c_0=1$. 

Symmetric analysis holds for the {\it{lower part}} of the graph, leading to transformations (b) and (d) in Definition \ref{modaugmented}. Corollary \ref{cormodaugmented4spheres} completes the proof. 
\end{proof}

\begin{figure}
    \centering
\includegraphics[width = 11cm]{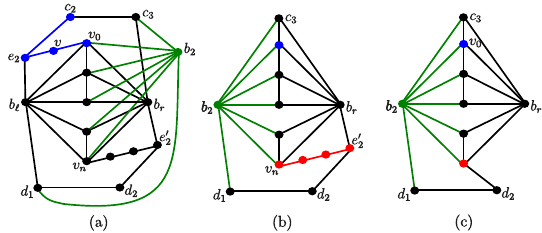}
\caption{\small{The graphs illustrating Example~\ref{ejfinal}.}} 
\label{exfinal}
\end{figure}

\begin{example}\label{ejfinal}
Let $G$ be the graph depicted in Figure \ref{exfinal}(a), which is an induced subgraph of one of the graphs depicted in Figure \ref{romboidshead}(i). Vertex $b_\ell$ dominates $v$ so Domination Lemma applies, and after applying Csorba reduction over the path $c_2-e_2-v-v_0$, we get that $I(G)\sim_h \Sigma I(G_1)$, with $G_1$ the graph in Figure \ref{exfinal}(b). Applying once more Csorba reduction over the path conecting $v_n$ and $e_2'$ in $G_1$ we obtain $I(G_1) \sim_h \Sigma I(H)$, with $H$ the graph in Figure \ref{exfinal}(c), which is an induced subgraph of a mod-agumented simple rhomboid graph. 
\end{example}

The following result, which follows from analysis of Figure~\ref{romboidshead}, will be useful when considering negative letters in the tail of a braid diagram, which we address in the next section.

\begin{lemma}\label{claim} Let $w \dot{\in}\mathcal{C}_3$ with maximal head $w_H$ and positive tail $w_T$, and let $a \in w_H$ be a negative letter and $v_a$ its associated vertex in $G(w)$. Then:
\begin{itemize}
\item[(1)] If $a= \sigma_2^{-1} \in w_2$, then $v_a$ is adjacent to all spoke vertices. 
\item[(2)] Otherwise, $v_a$ is not adjacent to any vertex in $P-e_2-e_2'$, where $P$ is the extended spine (see Definition \ref{defallrhomboids}) of $G(w)$.
\end{itemize}
\end{lemma}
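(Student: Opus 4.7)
The plan is to verify both parts by directly reading off adjacencies from Figure \ref{romboidshead} (for each of the five maximal head forms listed before Proposition \ref{subwordofmaximalhead}), or equivalently, by checking interleaving of chord endpoints on the main circle of $\varphi(\tilde w)$. Recall from Proposition \ref{Lando4pos}(d) that when $w^+ \in \mathcal{C}_3$ the chord diagram $\varphi(\tilde w^+)$ contains a single circle; the spoke vertices $v_1,\ldots,v_n$ come from the trailing $\sigma_2$ of each block $(\sigma_i\sigma_2)^{a_j}$ of the positive tail $w_T$, while $e_2$ and $e_2'$ are the two $\sigma_2$-chords flanking $w_H$. Adjacency of $v_a$ to a vertex $u$ in $G(w)$ amounts to interleaving of the corresponding chords on this circle.

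For part (1), consider the letter $a = \sigma_2^{-1}\in w_2$ (in case (e) the same argument applies to both $\sigma_2^{-1}$ letters of $w_2$). This letter sits between the $\sigma_3$ and the $\sigma_1$ of $w_H$, so after $B$-smoothing its two endpoints land on the main circle in arcs that separate, on one side, every endpoint coming from the blocks $(\sigma_1\sigma_2)^{a_j}$ to the left, and, on the other side, every endpoint coming from the blocks $(\sigma_3\sigma_2)^{a_j}$ to the right. Since each spoke chord $v_i$ has one endpoint in each of these two arcs, $v_a$ interleaves with every spoke, and therefore is adjacent to every $v_i$. This is precisely the ``hub'' role of $b_2$ (and of $b_2'$ in case (e)) visible in Figure \ref{romboidshead}(i)-(ii).

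For part (2), every remaining negative letter of $w_H$ lies near one of the four corners of the head: in $w_1$ before $\sigma_3$, in $w_3$ after $\sigma_1$, or as a boundary $\sigma_2^{-1}$ of $w_2$ adjacent to $\sigma_3$ or $\sigma_1$. For each possibility enumerated in Proposition \ref{propoptions}, a direct trace of the $B$-smoothing shows that both endpoints of $v_a$'s chord lie between $e_2$ (respectively $e_2'$) and the nearest ``corner'' chord ($b_\ell$, $b_r$, or $b_2$). Hence $v_a$ can only interleave with chords sitting in that small local region, which in Figure \ref{romboidshead} are exactly $e_2$ (or $e_2'$), $b_\ell$, $b_r$, $b_2$, and the neighbouring vertices $c_1,c_2,c_3$ (or $d_1,d_2,d_3$). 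In particular no spoke vertex, nor the far boundary vertex $e_2'$ (respectively $e_2$), is among them, which establishes the claim that $v_a$ is not adjacent to any vertex of $P - e_2 - e_2'$.

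The main obstacle is bookkeeping rather than conceptual: there are five maximal head forms and up to three negative letters on each side of $w_2$, so the case analysis is lengthy. However, each individual verification is a short local interleaving check on the single main circle, and all of them are simultaneously captured by the two pictures in Figure \ref{romboidshead}; once these are drawn, the lemma is immediate by inspection.
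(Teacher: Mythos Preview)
Your approach is essentially the same as the paper's: the paper states only that the lemma ``follows from analysis of Figure~\ref{romboidshead}'', and you likewise reduce both parts to inspection of interleaving in the chord diagram (equivalently, of adjacencies in Figure~\ref{romboidshead}), supplying more detail than the paper does. One small bookkeeping slip: in your enumeration for part~(2) you list the remaining negative letters as lying ``in $w_1$ before $\sigma_3$, in $w_3$ after $\sigma_1$, or as a boundary $\sigma_2^{-1}$ of $w_2$'', but the boundary $\sigma_2^{-1}$'s of $w_2$ are already handled in part~(1), while in the maximal head~(e) the letters $\sigma_1^{-1},\sigma_3^{-1}\in w_2$ (the vertices $e_1,e_3$ of Figure~\ref{romboidshead}(ii)) are not on your list; these also need to be checked, and indeed in that picture they meet only $b_2,b_2'$ and each other, not the spine. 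Since you explicitly fall back on Figure~\ref{romboidshead} at the end, the argument is still correct.
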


\subsubsection{Elimination of negative letters in tail $w_T$}\label{secttail}

Proposition \ref{subwordofmaximalhead} and Corollary \ref{cormodaugmented4spheres} provide a proof of Theorem \ref{theomain} for those braid words having positive tail. In this section we show that, given a braid diagram $w\dot{\in}\mathcal{C}_3$, it is possible to reduce (in polynomial time) all negative letters from $w_T$ to obtain a braid $w'\dot{\in}\mathcal{C}_3$ with no negative letters in its tail $w'_T$ and determine how $I(w')$ is related to $I(w)$. We first eliminate those $\sigma_2^{-1}$ occurrences in $w_T$, and then letters $\sigma_1^{-1}$ and $\sigma_3^{-1}$.

\begin{lemma}\label{elimination2tail} (Elimination of $\sigma_2^{-1}$ in $w_T$)
Let $w \in M_4^{red}$ and let $w_0$ be a subword  of $\tilde w$ of the form $w_0= \sigma_2w_1\sigma_1w_2\sigma_2$, with $w_i\in M^-_4$ and $w_0$ containing the letter $\sigma_2^{-1}$. Replace $w_0$ by $w_0' = \sigma_2w_1'\sigma_1\sigma_2\sigma_1w_2'\sigma_2$ in $\tilde{w}$ to get $\tilde{w}'$, where $w_i'$ is obtained from $w_i$ by deleting $\sigma_2^{-1}$ occurrences, for $i=1,2$. Write $v_1, v_2, v_1'$ for the vertices of $G(w')$ associated to each of the letters of the central subword $\sigma_1\sigma_2\sigma_1$. Then
\begin{enumerate}
\item $I(w) \sim_h I(G(w')-v_2)$, \quad \quad \quad if $w_1$ and $w_2$ contain $\sigma_2^{-1}$,
\item $I(w) \sim_h I(G(w')-\{v_1,v_2\})$ \quad if $\sigma_2^{-1} \notin w_1$,
\item $I(w) \sim_h I(G(w')-\{v_1',v_2\})$ \quad if $\sigma_2^{-1} \notin w_2$.
\end{enumerate}
\end{lemma}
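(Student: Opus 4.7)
The plan is to compare the chord diagrams $\varphi(\tilde{w})$ and $\varphi(\tilde{w}')$ locally in the region of $w_0$ and $w_0'$, and then identify the resulting Lando graphs (after removing the specified vertices) via direct isomorphism or Domination Lemma. Outside of the subword $w_0$ (respectively $w_0'$) the two diagrams coincide, so any difference in the Lando graphs is localized to the region of the replacement.

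First I would examine the local chord picture in $w_0 = \sigma_2 w_1 \sigma_1 w_2 \sigma_2$: the $B$-smoothing of the two outer $\sigma_2$'s produces horizontal arcs on the strand $2$--$3$ interface, each $\sigma_2^{-1}$ in $w_1$ or $w_2$ produces an admissible horizontal chord on that interface (hence a vertex in $G(w)$), while the $\sigma_1^{-1}$ and $\sigma_3^{-1}$ letters produce chords on the $1$--$2$ and $3$--$4$ interfaces. In $w_0' = \sigma_2 w_1' \sigma_1 \sigma_2 \sigma_1 w_2' \sigma_2$, the inserted $\sigma_1 \sigma_2 \sigma_1$ yields three vertical chords giving the vertices $v_1, v_2, v_1'$. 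The crucial adjacency observations I would establish are:
\begin{enumerate}
\item[$(\alpha)$] In $G(w')$, $v_2$ is adjacent only to $v_1$ and $v_1'$, since every other chord that could alternate with $v_2$ on a common circle would come from a $\sigma_2^{-1}$ in $w_1'$ or $w_2'$, and all such letters have been removed in passing from $w_i$ to $w_i'$.
\item[$(\beta)$] The vertex $v_1$ (respectively $v_1'$) has exactly the same adjacencies in $G(w') - v_2$ as the first (respectively last) occurrence of $\sigma_2^{-1}$ in $w_1$ (respectively $w_2$) would have in $G(w)$; in particular it is adjacent to the same set of $\sigma_1^{-1}$- and $\sigma_3^{-1}$-vertices coming from $w_1$ and from $w_2$.
\end{enumerate}

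With $(\alpha)$ and $(\beta)$ in hand, the three cases follow by direct identification. For Case (1), matching the first $\sigma_2^{-1}$ of $w_1$ with $v_1$ and the last $\sigma_2^{-1}$ of $w_2$ with $v_1'$ gives an isomorphism $G(w) \cong G(w')-v_2$, whence $I(w) = I(G(w')-v_2)$. For Case (2), there is no vertex in $G(w)$ playing the role of $v_1$, so in addition to $v_2$ one must remove $v_1$; similarly for Case (3). In each case, $(\alpha)$ together with Corollary~\ref{leaf} handles any residual incidences (e.g.\ $v_2$ becoming a leaf or isolated vertex in intermediate subgraphs).

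The main obstacle is verifying observation $(\beta)$ uniformly: I need to check that the chord for $v_1$ alternates on the same circle with exactly those $\sigma_1^{-1}$- and $\sigma_3^{-1}$-chords from $w_1$ that the leftmost $\sigma_2^{-1}$ of $w_1$ alternates with. This requires a careful combinatorial check of how chords interleave after the $B$-smoothing, with a case analysis over how $w_1$ (and $w_2$) arrange $\sigma_1^{-1}, \sigma_3^{-1}$ letters around their $\sigma_2^{-1}$ letters. Since $w$ is strongly-reduced, however, the possible shapes of $w_1$ and $w_2$ are severely constrained (in the same spirit as Proposition~\ref{propoptions}), reducing the verification to a finite and manageable case check.
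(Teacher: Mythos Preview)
Your plan is close to the paper's, but there is a genuine gap. In Case~(1) you claim a direct isomorphism $G(w)\cong G(w')-v_2$ by sending the $\sigma_2^{-1}$ of $w_1$ to $v_1$ and the $\sigma_2^{-1}$ of $w_2$ to $v_1'$. But count vertices: the subword $w_0=\sigma_2 w_1\sigma_1 w_2\sigma_2$ contributes to $G(w)$ the two outer $\sigma_2$-vertices, the vertex $e_1$ coming from the central $\sigma_1$, the two vertices $u_2,u_2'$ coming from the $\sigma_2^{-1}$'s, and the $\sigma_1^{-1},\sigma_3^{-1}$-vertices. On the other side, $w_0'=\sigma_2 w_1'\sigma_1\sigma_2\sigma_1 w_2'\sigma_2$ contributes to $G(w')-v_2$ the two outer $\sigma_2$-vertices, the two vertices $v_1,v_1'$, and the same $\sigma_1^{-1},\sigma_3^{-1}$-vertices. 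So $G(w)$ has exactly one more vertex than $G(w')-v_2$; your map has nowhere to send $e_1$, and the asserted isomorphism is false as stated.

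The missing step is precisely what the paper does: in $G(w)$ the vertex $e_1$ dominates both $u_2$ and $u_2'$ (and is not adjacent to them), so by the Domination Lemma $I(w)\sim_h I(G(w)-e_1)$. After removing $e_1$, your correspondence $u_2\leftrightarrow v_1$, $u_2'\leftrightarrow v_1'$ does give the isomorphism $G(w)-e_1\cong G(w')-v_2$, and this is exactly your observation $(\beta)$. Cases~(2) and~(3) then follow by the same argument with the appropriate vertex absent. In short, your $(\beta)$ is the right identification, but it establishes $G(w)-e_1\cong G(w')-v_2$, not $G(w)\cong G(w')-v_2$; you need one application of domination on the $G(w)$ side before the graphs match.
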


\begin{proof} First assume both $w_1$ and $w_2$ contain $\sigma_2^{-1}$. See Figure \ref{splittingvertex}(a). It is clear that the vertex $e_1$ associated to the unique occurrence $\sigma_1\in w_0$ dominates both vertices $u_2$ and $u_2'$ associated to $\sigma_2^{-1}$ in $w_1$ and $w_2$. Then, Domination Lemma applies and $I(w) \sim_h I(G(w)-e_1)$. However, the graph $G(w)-e_1$ is isomorphic to the graph $G(w')-v_2$, as shown in Figure \ref{splittingvertex}(b); vertices $u_2$ and $u_2'$ in $G(w)-e_1$ take the role of $v_1$ and $v_1'$ in $G(w')-v_2$. This completes the proof of part (1). The same reasoning works for parts (2) and (3), removing from Figure \ref{splittingvertex} the vertices that are not in the diagram. 
\end{proof}

\begin{figure}
    \centering
\includegraphics[width = 13cm]{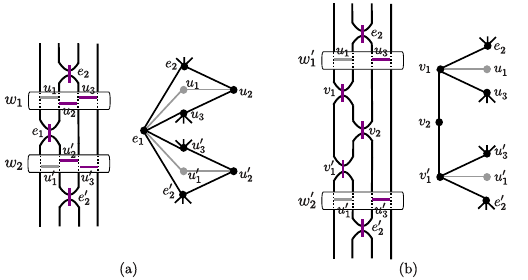}
\caption{\small{The chord diagram associated to $w$ (resp. $w'$) and the graphs $G(w)$ (resp. $G(w')$) illustrating proof of Lemma \ref{elimination2tail} are shown in (a) (resp. (b)).}}
\label{splittingvertex}
\end{figure}

\begin{remark}
By symmetry, it is straightforward that Lemma \ref{elimination2tail} holds when $w_0=\sigma_2w_1\sigma_3w_2\sigma_2$, with $w_1, w_2\in M_4^{-}$. 
\end{remark}

We call the idea in proof of Lemma \ref{elimination2tail} the {\it splitting vertex method}. It allows to transform a braid diagram $w \dot{\in} \mathcal{C}_3$, into another braid diagram $w'\dot{\in}  \mathcal{C}_3$ with $\sigma_2^{-1} \notin w'_T$ satisfying $I(w) \sim_h I(H)$, with $H$ an induced subgraph of $G(w')$. Hence, from now on we assume $w \dot{\in} \mathcal{C}_3$ not to contain letters $\sigma_2^{-1}$  in $w_T$. Next, we show how to eliminate $\sigma_1^{-1}$ and $\sigma_3^{-1}$ from $w_T$. 

\begin{lemma}\label{lemaspider}
Let $w\dot\in\mathcal{C}_3$ with $\sigma_2^{-1} \notin w_T$. Then, the vertices associated to each occurrence $\sigma_1^{-1}$ and $\sigma_3^{-1}$ in $w_T$ are adjacent to at most five vertices in $G(w)$. 
\end{lemma}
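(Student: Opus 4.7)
The plan is to analyze the local chord structure in the chord diagram $\varphi(\tilde{w})$ around a fixed occurrence of $\sigma_1^{-1}$ in $w_T$; the case of $\sigma_3^{-1}$ follows via the involution $\sigma_1 \leftrightarrow \sigma_3$. Let $\chi$ be the horizontal chord associated to this $\sigma_1^{-1}$ and let $v = v_\chi$ be its vertex in $G(w)$. A chord $\chi'$ contributes an edge incident to $v$ exactly when $\chi$ and $\chi'$ lie on the same circle of $\varphi(\tilde{w})$ with endpoints alternating along it, so the goal is to show that at most five such $\chi'$ exist.

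The first step is to trace the circle of $\varphi(\tilde w)$ through each of the two endpoints of $\chi$, which sit at the vertical $B$-smoothing of strands $1$ and $2$. Because $\sigma_2^{-1}\notin w_T$ by hypothesis, and since any $\sigma_1^{-1}$ or $\sigma_3^{-1}$ in $w_T$ resolves to a vertical smoothing that simply carries the strand straight through, each of the four arcs emanating from $\chi$'s endpoints travels along strand $1$ or strand $2$ until it first hits a positive crossing. This localizes the analysis to a small, combinatorially explicit neighborhood around the chosen $\sigma_1^{-1}$.

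Next, I would enumerate the candidate chords $\chi'$ that can alternate with $\chi$ on the common circle. Such a $\chi'$ must have precisely one endpoint on each of the two arcs into which $\chi$ splits the circle. Consequently $\chi'$ is either a vertical chord arising from one of the nearby positive $\sigma_i$ crossings (whose number is bounded by the $(\sigma_1\sigma_2)^{a_j}$, $(\sigma_3\sigma_2)^{a_j}$ block structure enforced by $w \dot\in \mathcal{C}_3$), or a horizontal chord coming from a neighboring $\sigma_1^{-1}$ or $\sigma_3^{-1}$ in $w_T$ lying in one of the two regions flanking $\chi$. A direct count in each configuration, using the explicit local pictures one obtains, yields at most five alternating chords.

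The main obstacle is the casework, which splits according to the immediate positive context of the $\sigma_1^{-1}$ in $w_T$ (whether it sits inside an $(\sigma_1\sigma_2)$-block, between two blocks of differing types, or close to the head--tail boundary), as well as the presence of neighboring negative letters $\sigma_1^{-1}$ or $\sigma_3^{-1}$ in $w_T$. The hypotheses that $w$ is strongly reduced, in particular $R_2$-reducedness and nesting-freeness, forbid the configurations that would otherwise push the degree of $v$ above five; this is the key constraint that makes the bound tight. Once each local case is checked by inspection of the corresponding chord diagram, the lemma follows.
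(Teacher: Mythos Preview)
Your plan has a genuine gap: you localize the analysis to a neighborhood of the chosen $\sigma_1^{-1}$ in the tail and look for alternating chords among ``nearby positive $\sigma_i$ crossings'' and ``neighboring $\sigma_1^{-1}$ or $\sigma_3^{-1}$ in $w_T$''. But for $w \dot{\in} \mathcal{C}_3$ the chord diagram $\varphi(\tilde w)$ consists of a single circle, and the horizontal chord $\chi$ associated to your negative letter splits that circle into two arcs each of which runs all the way through the head. In fact four of the at most five neighbors of the spider vertex live in the head: they are $b_\ell$ and $b_r$ (the vertices of the positive letters $\sigma_1, \sigma_3$ in $w_H$) together with the at most two vertices $b_2, b_2'$ coming from occurrences of $\sigma_2^{-1}$ in the middle block $w_2^H$ of the head. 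Only one neighbor, called $v^s$, comes from the tail. Your casework on ``immediate positive context'' and ``neighboring negative letters in $w_T$'' therefore targets the wrong source of adjacencies, and stopping the arc-trace at the first positive crossing does not bound the set of alternating chords.

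The paper's argument avoids that local casework entirely. Strong reducibility (nesting freeness and $R_2$-reducedness, together with $\sigma_2^{-1}\notin w_T$) pins down the subword of $w_T$ containing, say, $\sigma_3^{-1}$: it must be $\sigma_2\sigma_3^{-1}\sigma_1\sigma_2$. One then reads off directly that the only tail chord alternating with $\chi$ is that of the adjacent $\sigma_1$; the remaining alternating chords necessarily come from $w_H$, and the enumeration of possible heads in Proposition~\ref{propoptions} shows that $w_2^H$ contains at most two copies of $\sigma_2^{-1}$, giving $1+2+2=5$. The explicit identification of each of these five vertices is exactly what feeds into Definition~\ref{defspider} and the subsequent spider-elimination algorithm, so an argument that merely bounds the degree without naming the neighbors would not suffice for what follows.
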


\begin{figure}
    \centering
\includegraphics[width = 11cm]{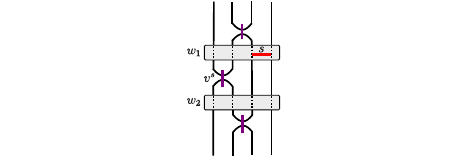}
\caption{\small{The chord diagram corresponding to the subword $w_0=\sigma_2 w_1 \sigma_1 w_2 \sigma_2 \in w_T$, with $w_1,w_2\in M_4^-$, illustrating the proof of Lemma \ref{lemaspider}.}}
\label{spidercase}
\end{figure}

\begin{proof}
Let $\sigma_3^{-1} \in w_T$, and denote the associated vertex in $G(w)$ by $s$. Since $w$ is strongly-reduced, then $\sigma_3^{-1}$ belongs to a subword $w_0=\sigma_2 w_1 \sigma_1 w_2 \sigma_2$ of $\tilde{w}$, with $w_1,w_2\in M_4^-$. Assume, without loss of generality, that $\sigma_3^{-1}\in w_1$; then, by strong reducibility (nesting free and $R_2$-reduced conditions), $w_1 = \sigma_3^{-1}$ and $w_2$ is trivial. It follows from Figures \ref{casec3a1big} and \ref{spidercase} that the vertex corresponding to $\sigma_1$ in $w_0$ is the unique vertex from $w_T$ which is adjacent to $s$ in $G(w)$. We denote it by $v^s$.

Next, we study which vertices from the head $w_H =\sigma_2 w_1^H \sigma_3 w_2^H \sigma_1w_3^H \sigma_2$, with $w_i^H\in M_4^-$ for $i=1,2,3$, are adjacent to $s$. It is clear from Figure \ref{head1} that the only vertices of $w_H$ adjacent to $s$ in $G(w)$ are those corresponding to $\sigma_1$ and $\sigma_3$ together with occurrences of $\sigma_2^{-1}$ in $w_2^H$. However, since $w$ is strongly-reduced it follows from  Proposition~\ref{propoptions} that $w_2^H$ contains at most two $\sigma_2^{-1}$ occurrences.
\end{proof}

\begin{definition}\label{defspider}
Let $w \dot{\in}\mathcal{C}_3$ with $\sigma_2^{-1}\notin w_T$. The vertices associated to letters $\sigma_1^{-1}$ and $\sigma_3^{-1}$ in $w_T$ are called spiders. Each spider $s$ is adjacent to at most five vertices:\\
- vertex $b_\ell$ associated to $\sigma_1 \in w_H$. \\
- vertex $b_r$ associated to $\sigma_3 \in w_H$. \\
- vertices $b_2$, $b_2'$ associated to occurrences $\sigma_2^{-1} \in w_2^H \subset w_H$. \\
- vertex $v^s$, associated to the unique vertex from $w_T$ adjacent to $s$ (for details see proof of Lemma \ref{lemaspider}). If we write $P$ for the extended spine of $G(w)$, then $v^s$ is a vertex in $P - e_2 - e_2'$. 
\end{definition}

\begin{remark}\label{remspi}
Observe (see Figure \ref{romboidshead}) that vertices $b_\ell, b_r$, $b_2$ and $b_2'$ are adjacent to all spoke vertices in $G(w)$, and therefore if we consider an induced subgraph not containing the vertex $v^s$ associated to a spider $s$, then all spoke vertices and other spiders dominate $s$ and we apply Domination Lemma and further reductions in Section \ref{secbasic} to get the complete graph $K_4$ (with $I(K_4) \sim_h S^0 \vee S^0 \vee S^0$) or an induced subgraph of $K_4$. Therefore, $I(G)$ is either contractible or has the homotopy type of the wedge of at most 3 spheres of the same dimension.
\end{remark}

We are now ready to present the next step of our algorithm, which allows to eliminate spiders (i.e., $\sigma_1^{-1}$ and $\sigma_3^{-1}$ occurrences) from the tail of a given braid diagram. 

Let $w \dot{\in} \mathcal{C}_3$ and assume $\sigma_2^{-1}\notin w_T$ and $G=G(w)$ contains no leaves (otherwise, apply Corollary \ref{leaf}). Given a spider $s$, define $d^s$ as the shortest distance between $v^s$ and a spoke vertex $\{v_i\}_{i=0}^n$ taken along the extended spine  $P$ of $G$; we set $d^s = \infty$ if $v^s$ is not connected to any spoke vertex along $P$. Let $d = \min \{d^s \, | s \, \, \mbox{is a spider in } G\}$. We eliminate spiders recursively; in each step we eliminate those spiders $s$ satisfying $d^s=d$. Observe that Proposition \ref{genCsorba} and Lemma \ref{claim} imply that we can restrict to the case when $d \in\{0,1,2,\infty\}$. Next we analyze each of these cases. \\

\begin{enumerate}

\item[(1)] \textbf{Case $d=0$}.
Vertex $v^s$ is a spoke vertex and therefore it dominates $s$, so by Domination Lemma $I(w) \sim_H I(G-v^s) \vee \Sigma I(G-st(v^s)).$

By Remark \ref{remspi}, $I(G-v^s)$ is either contractible or has the homotopy type of a sphere or two spheres or three spheres of the same dimension. Moreover, in the graph $G-st(v^s)$ all spiders become leaves and they can be removed by using Corollary~\ref{leaf}, leading to a forest; thus, $I(G-st(v^s))$ is either contractible or homotopy equivalent to a sphere. The fact that the dimensions of the spheres arising in $I(G-v^s)$ are smaller or equal than the dimension of the sphere in $I(G-st(v))$ follows from Lemma \ref{Lemma123}. In particular, Theorem \ref{theomain} holds for $w$.\\

\item[(2)] \textbf{Case $d=1$}.
See Figure \ref{cased1}(a). Spoke vertex $v_i$ dominates $s$, so $I(G) \sim_h I(G-v_i)$. Graph $G-v_i$ can be thought as removing $s$ from $G$ together with edge $e$; applying Proposition \ref{trickremovingedge} completes this case, allowing to remove vertex $s$. \\ 

\begin{figure}
    \centering
\includegraphics[width = 11.5cm]{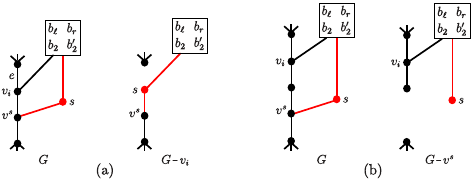}
\caption{\small{Graphs in the cases when the vertex $v^s$ associated to a spider $s$ is at distance 1 and 2 from spoke vertex $v_i$ are illustrated in (a) and (b), respectively. An edge connected to the box represents four edges connected to each of the vertices in the box, in case these vertices are in $G$.}}
\label{cased1}
\end{figure}

\item[(3)] \textbf{Case $d=2$}.
See Figure \ref{cased1}(b). Spoke vertex $v_i$ dominates $s$ in $G-v^s$, so $I(G-v^s) \sim_h I(G-v^s-v_i)$, which is contractible. Therefore, Proposition \ref{Prop3.3} applies and $$I(G) \sim_h I(G-v^s) \vee \Sigma I(G-st(v^s)) \sim_h \Sigma I(G-st(v^s)),$$ and therefore $s$ can be removed. \\

\item[(4)] \textbf{Case $d=\infty$}.
In this case, there exist no path along the extended spine $P$ connecting $v^s$ to any spoke vertex, for every spider $s$. Since $G$ has no leaves (otherwise, we apply Corollary \ref{leaf}) 
Proposition \ref{genCsorba} implies that there are essentially three possibilities: \\

\noindent(i) There exist two spiders $s_1$ and $s_2$ so that $v^{s_1}$ and $v^{s_2}$ are connected by an edge. See Figure \ref{casedinfinito}(a). In $G-v^{s_2}$ the vertex $v^{s_1}$ becomes a leaf and therefore $I(G-v^{s_2}) \sim_h \Sigma I(G-v^{s_2}-st(s_1))$, which is contractible. Thus Proposition \ref{Prop3.3} leads to $I(G) \sim_h \Sigma I(G-st(v^{s_2}))$, which is either contractible or has the homotopy type of the wedge of at most 3 spheres of the same dimension, by Remark \ref{remspi}. \\

\begin{figure}
    \centering
\includegraphics[width = 12cm]{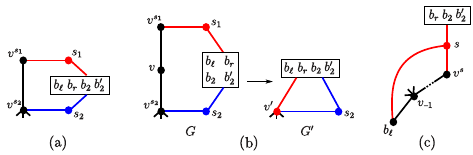}\caption{\small{Graphs illustrating the graphs involved in the case when $d= \infty$. Cases (a), (b) and (c) correspond to situations (i), (ii) and (iii), respectively. As before, an edge connected to a box represents edges connected to each of the vertices in the box, in case these vertices are in the graph.}}
\label{casedinfinito}
\end{figure}

\noindent(ii) There exist two spiders $s_1$ and $s_2$ so that $v^{s_1}$ and $v^{s_2}$ are at distance 2. See Figure \ref{casedinfinito}(b). Consider the graph $G'$ obtained from $G$ after collapsing the path of length 3 connecting $s_1$ and $v^{s_2}$, and write $v'$ for the resulting vertex. By Proposition \ref{genCsorba} we get that $I(G) \sim_h \Sigma I(G')$, and since $v'$ dominates $s_2$ in $G'$, Domination Lemma implies that $$I(G') \sim_h I(G'-v')\vee \Sigma I(G'-st(v')).$$ 
In $G'-v'$ we apply Remark \ref{remspi}, while the graph $G'-st(v')$ is a forest and then Proposition \ref{Koz}(2) applies. Lemma \ref{Lemma123} implies that the dimensions of the spheres in $I(G'-v')$ are smaller or equal than that of the sphere arising in $I(G'-st(v'))$. \\

\noindent (iii) There is a spider $s$ connected by a path to vertex $v_{-1}$ as shown in Figure \ref{casedinfinito}(c). In this case we can think of $G$ as the induced graph of an augmented rhomboid graph $G'$ where spoke vertices are defined in terms of vertices of $G$ as $v_0'=s$, $v_{i+1}'=v_i$, for $0 \leq i \leq n+1$. \\
\end{enumerate}

Repeated application of the methods described in this section (Lemma~\ref{elimination2tail} together with discussion of cases depending on value of $d$) gives rise to the following result:

\begin{proposition}\label{reducetopositivetail}
Given a braid diagram $w \dot{\in} \mathcal{C}_3$, there exists a polynomial complexity algorithm which either determines directly the homotopy type of $I(G)$ and it satisfies conditions in Theorem \ref{theomain}, or it produces a braid diagram $w'\dot{\in} \mathcal{C}_3$ with no negative letters in $w_T'$ with the property that $I(w)$ is obtained from $I(G')$, where $G'$ is an induced subgraph of $G(w')$. The precise way to obtain $I(w)$ from $I(G')$ is given by the algorithm and it consists of taking a finite number of suspensions and, possibly, a wedge operation.
\end{proposition}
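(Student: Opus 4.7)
The plan is to assemble the individual reductions from Section~\ref{secttail} into a single iterative procedure that strips all negative letters from the tail $w_T$. Starting from $w \dot{\in} \mathcal{C}_3$, I first apply the splitting vertex method of Lemma~\ref{elimination2tail} repeatedly. Any $\sigma_2^{-1}$ in $w_T$ sits between two positive $\sigma_2$'s separated by a $\sigma_1$ or $\sigma_3$ (since the positive core of $w_T$ has the structure $(\sigma_1\sigma_2)^{a_i}$ or $(\sigma_3\sigma_2)^{a_i}$), so Lemma~\ref{elimination2tail} (together with its $\sigma_3$-analogue from the remark after it) always applies. Each application strictly reduces the number of $\sigma_2^{-1}$ letters in the tail and replaces $I(w)$ by the independence complex of an explicit induced subgraph of the Lando graph of a braid still in $\mathcal{C}_3$, in polynomial time. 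After at most linearly many rounds no $\sigma_2^{-1}$ remains in the tail.

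Next I would handle the spiders, that is, the vertices from the remaining $\sigma_1^{-1}$ and $\sigma_3^{-1}$ letters in $w_T$. Writing $G = G(w)$ and $d = \min_s d^s$ along the extended spine, I first use Proposition~\ref{genCsorba} to normalize $d$ into $\{0,1,2,\infty\}$, and then run the four-case analysis sketched in Section~\ref{secttail}. In cases $d \in \{1,2\}$ a spoke vertex dominates the spider, and Domination Lemma together with Proposition~\ref{trickremovingedge} (when $d=1$) or Proposition~\ref{Prop3.3} (when $d=2$) removes the spider, strictly decreasing the spider count and producing a still-valid braid in $\mathcal{C}_3$. In case $d=0$, and in subcases (i) and (ii) of $d=\infty$, the decomposition $I(G) \sim_h I(G-v^s) \vee \Sigma I(G-st(v^s))$ combined with Remark~\ref{remspi} and Lemma~\ref{Lemma123} already yields a homotopy type of the form required by Theorem~\ref{theomain}. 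In subcase (iii) of $d=\infty$ the spider is absorbed as a new spoke of an augmented rhomboid re-interpretation of $G$, which is then treated as in Proposition~\ref{subwordofmaximalhead}. Iterating strictly decreases the spider count, so after linearly many rounds I either already know the homotopy type or arrive at a braid $w' \dot{\in} \mathcal{C}_3$ with positive tail, together with an induced subgraph $G' \subseteq G(w')$ and a recipe (a sequence of suspensions and possibly a single wedge) recovering $I(w)$ from $I(G')$.

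Polynomial complexity follows because each elementary step — locating a spider, computing $d^s$ along the spine, performing a Csorba contraction, applying Domination Lemma, and recording the suspension and wedge factors — is linear or quadratic in the number of crossings, while the total number of steps is bounded by the length of $w$.

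The main obstacle I expect lies in the bookkeeping of the cases $d=0$ and $d=\infty$, where the homotopy type is pinned down directly rather than by further word reduction. There, the factor $I(G-v^s)$ (controlled by Remark~\ref{remspi}) may contribute up to three spheres of equal dimension and the factor $\Sigma I(G-st(v^s))$ contributes at most one sphere; to match Theorem~\ref{theomain} one needs that the latter sphere's dimension dominates. Verifying this dimensional inequality, using Lemma~\ref{Lemma123} applied to the spine together with the positional restriction on negative head letters from Lemma~\ref{claim}, is the most delicate part of the argument and the one demanding the most care.
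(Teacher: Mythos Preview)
Your proposal is correct and follows essentially the same route as the paper: the paper's own proof of Proposition~\ref{reducetopositivetail} is simply the one-line assertion that repeated application of Lemma~\ref{elimination2tail} together with the case-by-case elimination of spiders according to the value of $d\in\{0,1,2,\infty\}$ yields the result, and your write-up faithfully expands exactly this procedure. Your identification of the dimensional inequality (handled via Lemma~\ref{Lemma123} and Remark~\ref{remspi}) as the most delicate point is also in line with how the paper treats the $d=0$ and $d=\infty$ cases.
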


Proposition \ref{reducetopositivetail} together with Proposition \ref{subwordofmaximalhead} complete the proof of the main theorem in the case when the given braid diagram belongs to the family $\mathcal{C}_3$ and $a_1>1$. The remaining cases when $w \dot{\in} \mathcal{C}_3$ with $a_1=1$ or $w \dot{\in} \mathcal{C}_4$ follow in a similar manner (in fact, the case when $w \dot{\in} \mathcal{C}_4$ and $a_1>1$ follows from Proposition \ref{c4toc3}). This completes the proof of Theorem \ref{theomain}. For convenience of the reader we finish this section with a sketch of the algorithm which, given a $4$-braid diagram as input, determines the homotopy type of $I(w)$ in polynomial time. 

\vspace{0.55cm}

\hrule

\vspace{0.22cm}

\noindent \textbf{Input}: a $4$-braid diagram $w\in M_4$ \vspace{0.2cm}

$\bullet$ Find $w_1:=w^{red}$ strongly-reduced braid diagram 
(Corollary~\ref{Reduction}) \vspace{0.2cm}

$\bullet$  Identify $j$ so that $w_1\dot{\in}\mathcal{C}_j$, $0\leq j \leq 5$ (Proposition~\ref{B4classification}) \vspace{0.2cm}

\hspace{0.4cm} $\cdot$ If $j=0,1,2,5$, then apply Proposition \ref{easycases} \vspace{0.2cm}

\hspace{0.4cm} $\cdot$ Otherwise, use Proposition \ref{reducetopositivetail} to eliminate negative letters in $(w_1)_T$ \mbox{ } \hspace{0.7cm} and apply Proposition \ref{subwordofmaximalhead} \vspace{0.2cm} 

\noindent \textbf{Return} homotopy type of $I(w)$

\vspace{0.22cm}

\hrule

\vspace{0.1cm}

\section{Concluding remarks and future directions}\label{secfinal}
We believe that our main computational complexity conjecture (Conjecture \ref{Conjecture 1}) will be the central object of research for years to come.

The small steps we plan to address in the future is to work on almost extreme grades of Khovanov homology; compare to \cite{FedSil2,GeomDed}. The case of positive $5$-braids and extreme Khovanov homology seems to be possible to approach today. Generally, some new techniques must be discovered for a general proof of Conjecture \ref{Conjecture 1}. There have been several unsuccessful attempts to prove the wedge of spheres conjecture (Conjecture~\ref{Conjecture 3}), which seems to be rather difficult but very attractive. 

\subsection{Khovanov adequate diagrams}\label{sectionkhovad}
We conjecture that links $L$ whose Khovanov homology $Kh_{*,j_{\min}}(L)$ is different from every group in the statement of Corollary \ref{summarykhov} have braid index greater than four. Our results prove it only partially as in Corollary \ref{Adeq}. Before its statement we introduce a formal definition of {\it Khovanov adequate} diagrams and links.

\begin{definition}\label{KhovanovAdequate}A diagram $D$ is Khovanov $B$-adequate if its Khovanov homology in quantum grading $j_{\min}(D)=-c-2|s_B|$ is not trivial. Analogously, $D$ is Khovanov $A$-adequate if its Khovanov homology in quantum grading \, $j_{\max}:=\max\{j(s,\epsilon) \, | \, (s, \epsilon) \mbox{ is an enhanced state of } D \}$  is not trivial. A diagram satisfying both properties is said to be Khovanov adequate. 

A link is Khovanov $B$-adequate (resp. $A$-adequate or adequate) if there exists a Khovanov $B$-adequate (resp. $A$-adequate or adequate) diagram representing it.
\end{definition}

As a consequence of Theorem \ref{theomain} we get the following result. 

\begin{corollary}\label{Adeq} Let $L$ be a link so that the Khovanov homology of the lowest non-trivial row (i.e., the one with minimal quantum grading) is different from every group in the statement of Corollary \ref{summarykhov}. Then there is no Khovanov $B$-adequate braid diagram with 4 strands representing $L$.
\end{corollary}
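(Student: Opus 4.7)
The plan is a short contradiction argument that essentially repackages Corollary \ref{summarykhov} through the definition of Khovanov $B$-adequacy. Suppose toward contradiction that $L$ admits a Khovanov $B$-adequate braid diagram $D$ on $4$ strands, so that $L = \hat{D}$. By Definition \ref{KhovanovAdequate}, Khovanov $B$-adequacy is precisely the statement that the extreme framed Khovanov homology group $Kh_{*,\,j_{\min}(D)}(\hat{D})$ is non-trivial.

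Next, I would apply Corollary \ref{summarykhov} to the $4$-braid diagram $D$: it forces $Kh_{*,\,j_{\min}(D)}(\hat{D})$ to be either trivial or isomorphic to $\mathbb{Z}$, to $\mathbb{Z}[k]\oplus\mathbb{Z}[i]$, to $\mathbb{Z}[k]\oplus\mathbb{Z}^{2}[i]$, or to $\mathbb{Z}[k]\oplus\mathbb{Z}^{3}[i]$ with $k\geq i$. Since the $B$-adequacy assumption rules out the trivial case, we conclude that $Kh_{*,\,j_{\min}(D)}(\hat{D})$ is one of the four non-zero groups listed above.

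The final step is to identify $Kh_{*,\,j_{\min}(D)}(\hat{D})$ with the lowest non-trivial quantum row of the Khovanov homology of $L$. This is immediate from the construction: by definition the chain complex $C_{*,j}(D)$ vanishes for every $j<j_{\min}(D)$, so the homology is automatically zero in every strictly lower quantum grading, while $B$-adequacy gives it non-zero at $j_{\min}(D)$ itself. Hence this row is literally the minimum-quantum-grading row with non-trivial Khovanov homology, and the identification respects the passage to oriented Khovanov homology via the writhe-dependent shift of Remark \ref{remarkgradings}, since that shift is constant for a fixed diagram. The conclusion now directly contradicts the hypothesis on $L$, and the corollary follows.

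I do not expect any serious obstacle: the argument is little more than combining Corollary \ref{summarykhov} with the definition of Khovanov adequacy. The only point that requires any attention at all is verifying that ``lowest non-trivial row'' in the statement and ``$Kh_{*,\,j_{\min}}$'' in Corollary \ref{summarykhov} refer to the same group for the diagram at hand; once one fixes conventions through Remark \ref{remarkgradings}, this is just bookkeeping.
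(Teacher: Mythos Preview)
Your argument is correct and is exactly the intended one: the paper states Corollary~\ref{Adeq} without proof as an immediate consequence of Theorem~\ref{theomain} (via Corollary~\ref{summarykhov}) and Definition~\ref{KhovanovAdequate}, and your contradiction argument spells this out precisely, including the identification of $j_{\min}(D)$ with the lowest non-trivial quantum row under the $B$-adequacy hypothesis.
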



We speculate that it is possible to find new criteria for the crossing number of Khovanov adequate links, generalizing the result by Lickorish and Thistlethwaite \cite{LT}. We hope that it will stimulate research in this topic.

\subsection{Remarks on topological complexity}

Our initial approach to the complexity of finding the homotopy type of $I_D$ was via topological complexity of Smale \cite{Sm}. In that setting, an algorithm can be represented as a computation tree: a rooted tree growing down with the root (input) at the top and the leaves (output) at the bottom. Internal nodes are of two types: computational nodes and branching nodes, as shown in Figure \ref{nodes}. Computation nodes do not contribute to the topology of the computation tree, so the topological complexity of the tree is defined as the number of branching nodes. The topological complexity of a problem is the minimum topological complexity taken over all possible computation trees for that problem. 

\begin{figure}[h]
\centering
\includegraphics[width = 11cm]{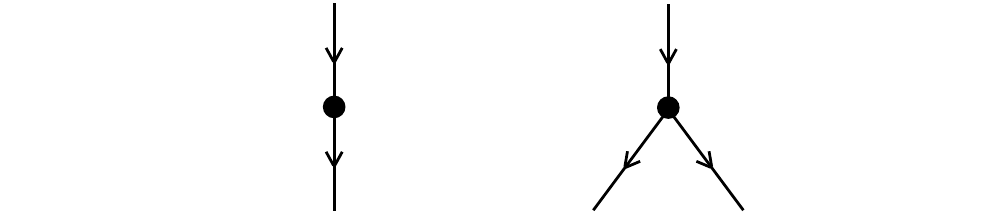}
\caption{\small{Nodes of a computational tree: computation node (left) and branching node (right).}}
\label{nodes}
\end{figure}

In our case, branching nodes are associated to wedge operations. In fact, the reason for attraction to topological complexity is visible through the whole paper: we often analyze a vertex $v$ of a graph $G$ and study the homotopy type of $I(G-v)$ and $I(G-st(v))$. In many case one of these simplicial complexes is contractible, so $I(G)\sim_h I(G-v)$ or $I(G)\sim_h \Sigma I(G-st(v))$. From the point of view of computation tree, this corresponds to a computation node, which is not increasing topological complexity (we have this situation for example if $v$ is a preleaf in $G$, as in Corollary \ref{leaf}). In some cases neither $I(G-v)$ nor $I(G-st(v))$ is contractible, but we can prove that $I(G-st(v))$ is contractible in $I(G-v)$, and therefore Proposition \ref{Prop3.3} leads to $I(G)\sim_h I(G-v) \vee \Sigma I(G-st(v))$, corresponding to a branching node in the computation tree. It was, at least initially, rather unexpected for us that branching nodes were used seldom. In fact, they were never used more than three times, and this is related to the fact that in Theorem \ref{theomain} the simplicial complex $I(w)$ has the homotopy type of the wedge of at most four spheres.

We decided not to include topological complexity in the paper because if we extend initial data in our problem (for example, including homotopy type of simple rhomboid graphs), then we can argue that topological complexity is equal to zero, so we cannot really say that in our case topological complexity is equal to three. On the other hand, if right after a branching node the complex in one of the branches is immediately contractible, still one can count the branching node in the computation tree and therefore its complexity will have linear grow.

\vspace{0.8cm}

\noindent \textbf{Acknowledgments:}
J. H. Przytycki is partially supported by the Simons Collaboration Grant 637794. M.Silvero is partially supported by Spanish Research Project PID2020-117971GB-C21, by IJC2019-040519-I, funded by MCIN/AEI/10.13039/501100011033 and by P20-01109 (JUNTA/FEDER).


\end{document}